\newcommand{\A}{\mathcal{A}}
\newcommand{\N}{\mathbb{N}}
\newcommand{\R}{\mathbb{R}}
\newcommand{\Z}{\mathbb{Z}}
\newcommand{\PV}{\mbox{\normalfont P.V.}}
\newcommand{\Haus}{\mathcal{H}}
\newcommand{\CC}{{\mathscr{C}}}
\newcommand{\delomega}{\partial\Omega}
\newcommand{\HalphaOmegat}{H_{\alpha}[\Omega_t]}
\DeclareMathOperator{\diam}{diam}
\DeclareMathOperator{\supp}{supp}
\DeclareMathOperator{\Per}{Per}
\def\XXint#1#2#3{{\setbox0=\hbox{$#1{#2#3}{\int}$ }
\vcenter{\hbox{$#2#3$ }}\kern-.6\wd0}}
\newlength{\dhatheight}
\numberwithin{equation}{section}
\theoremstyle{plain}
\newtheorem{definition}{Definition}[section]
\newtheorem{theorem}[definition]{Theorem}
\newtheorem{proposition}[definition]{Proposition}
\newtheorem{lemma}[definition]{Lemma}
\theoremstyle{definition}
\renewcommand{\le}{\leqslant}
\renewcommand{\leq}{\leqslant}
\renewcommand{\ge}{\geqslant}
\renewcommand{\geq}{\geqslant}
\title[A fractional Sobolev inequality on convex hypersurfaces]{A fractional Michael-Simon Sobolev inequality\\ on convex hypersurfaces}
\author{Xavier Cabr\'e}
\author{Matteo Cozzi}
\author{Gyula Csat\'o}
\address{\vspace{-\baselineskip}
\newline
\textit{Xavier Cabr\'e \textsuperscript{1,2}}
\newline
\textsuperscript{1}
ICREA, Pg. Lluis Companys 23, 08010 Barcelona, Spain
\newline
\textsuperscript{2} Universitat Polit\`ecnica de Catalunya, Departament de Matem\`{a}tiques, Diagonal 647, 08028 Barcelona, Spain
\newline
\textit{E-mail address}: \textit{\tt xavier.cabre@upc.edu}
}
\address{\vspace{-\baselineskip}
\newline
\textit{Matteo Cozzi \textsuperscript{3}}
\newline
\textsuperscript{3} University of Bath, Department of Mathematical Sciences, Claverton Down, Bath BA2 7AY, United Kingdom
\newline
\textit{E-mail address}: \textit{\tt m.cozzi@bath.ac.uk}
}
\address{\vspace{-\baselineskip}
\newline
\textit{Gyula Csat\'o \textsuperscript{4,5}}
\newline
\textsuperscript{4} Universitat de Barcelona,
Facultat de Matem\`atiques i Inform\`atica,
Gran Via de les Corts Catalanes 585,
08007 Barcelona, Spain
\newline
\textsuperscript{5} BGSMath Barcelona Graduate School of Mathematics,
Centre de Recerca Ma\-te\-m\`{a}\-ti\-ca,
Campus de Bellaterra, Edifici C,
08193 Bellaterra, Barcelona, Spain
\newline
\textit{E-mail address}: \textit{\tt gyula.csato@ub.edu}
}
\thanks{The first and the third authors are members of the Barcelona Graduate School of Mathematics, are part of the Catalan research 
group 2017 SGR 1392, and are supported by the MINECO grant MTM2017-84214-C2-1-P. 
The second author is supported by a Royal Society Newton International Fellowship.
The third author is also supported by the MINECO grant MTM2017-83499-P and by the ``Mar\'ia de Maeztu'' MINECO grant~MDM-2014-0445}
\keywords{fractional Sobolev inequalities on manifolds, nonlocal mean curvature, convexity,  fractional mean curvature flow, maximal time of existence.}
\subjclass[2010]{26D10, 46E35, 52A20, 53A07.}
\begin{document}

\begin{abstract}
The classical Michael-Simon and Allard inequality is a Sobolev inequality for functions defined on a submanifold of Euclidean space. It is governed by a universal constant independent of the manifold, but displays on the right-hand side an additional $L^p$ term weighted by the mean curvature of the underlying manifold. We prove here a fractional version of this inequality on hypersurfaces of Euclidean space that are boundaries of convex sets. It involves the Gagliardo semi-norm of the function, as well as its $L^p$ norm weighted by the fractional mean curvature of the hypersurface. 

As an application, we establish a new upper bound for the maximal time of existence in the smooth fractional mean curvature flow of a convex set. The bound depends on the perimeter of the initial set instead of on its diameter.
\end{abstract}

\maketitle

\section{Introduction}
\label{section:intro}

\noindent
The Michael-Simon and Allard inequality is a Sobolev inequality on submanifolds of Euclidean space
which includes, on its right-hand side, an additional~$L^p$ integral weighted by a power of the submanifold's mean curvature norm. Remarkably, the presence of this extra geometric term enables the inequality to hold with a universal
constant independent of the manifold. As a consequence, this classical result has
important applications to the regularity of surfaces with prescribed mean curvature~\cite{BG73,Dierkes} and to the theory of geometric flows~\cite{Evans-Spruck}, among others.

In this article, we establish a fractional version of the inequality on convex hypersurfaces of Euclidean space---that is, hypersurfaces which are the boundary of an open convex set.
It involves the Gagliardo fractional semi-norm of a function defined on the surface, as well as an additional~$L^p$ norm weighted now by a power of the nonlocal mean curvature. As for its classical counterpart, our inequality carries a universal constant. The validity of a similar inequality in non-convex surfaces is still an open question.

Prior to this work, the only available fractional Michael-Simon and Allard inequality was established by the first two authors in~\cite{Cabre-Cozzi} for functions defined on nonlocal minimal surfaces. It was conceived and used in~\cite{Cabre-Cozzi} to derive a gradient estimate for nonlocal minimal graphs. Since nonlocal minimal
surfaces are never convex (except for hyperplanes), the result of~\cite{Cabre-Cozzi} and the one presented here complement each other.

As an application of the functional inequalities developed in the current paper, we obtain an upper bound on the maximal time of existence for the smooth fractional 
\linebreak 
$\alpha$-mean curvature flow of a convex set.
The fractional mean curvature flow was introduced by Caffarelli and Souganidis~\cite{Caffarelli Souganidis} and by Imbert~\cite{Imbert level set} in connection to diffusion phenomena with long range interactions. Similarly to the standard motion by mean curvature, bounded sets evolving according to this flow will become smaller after some time and ultimately disappear in finite time. A bound from above for the maximal time of existence of the smooth flow has been obtained in S\'aez and Valdinoci~\cite[Corollary~7]{Saez Valdinoci} by comparison with shrinking spheres. It reads
$$
T^\ast \le C \diam(\Omega_0)^{1 + \alpha},
$$
where~$\diam(\Omega_0)$ is the diameter of the initial set~$\Omega_0$ and the constant~$C$ depends only on~$n$ and~$\alpha$.

Assuming the initial set to be convex, Chambolle, Novaga, and Ruffini~\cite{Chambolle Novaga Ruffini} showed that convexity is preserved along the flow. By combining this fact with our fractional Michael-Simon type inequalities, we are able to improve, in the case of smooth convex evolutions, the aforementioned result of~\cite{Saez Valdinoci} to an estimate involving the area of the initial surface. Specifically, we prove that if~$\{ \Omega_t \}_{t \ge 0}$ is a family of~$C^{2}$ open subsets of~$\R^{n + 1}$ evolving by fractional~$\alpha$-mean curvature flow, with~$\Omega_0$ convex, then the maximal time of existence~$T^\ast$ satisfies
$$
T^\ast \le C \hspace{1pt} |\partial \Omega_0|^{\frac{1 + \alpha}{n}},
$$
for some constant~$C$ depending only on~$n$ and~$\alpha$.

\subsection{The classical Michael-Simon and Allard inequality}
This inequality is an extension of the classical Sobolev inequality to~$m$-dimensional submanifolds of~$\R^{n+1}$. It  was proved in the seventies independently by Allard~\cite{Allard} and by Michael and Simon~\cite{Michal Simon}---the latter for a class of generalized submanifolds, the former in an even broader varifold setting. The  following is the statement in the context of~$C^2$ hypersurfaces~$M \subset \R^{n + 1}$. It makes no assumption on the topology of~$M$, in particular whether it is compact or not. We denote the space of~$C^1$ functions in~$M$ with compact support by~$C_c^1(M)$, which agrees with~$C^1(M)$ when~$M$ is compact. 

\begin{theorem}[Allard \cite{Allard}, Michael and Simon \cite{Michal Simon}]
\label{thm:Michael-Simon Allard}
Let~$n\geq 2$ be an integer,~$p\in [1,n)$, and~$M\subset\R^{n+1}$ a~$C^2$ hypersurface.
Then, there exists a constant~$C$ depending only on~$n$ and~$p$, such that
\begin{equation}
 \label{eq:intro:classical MS}
  \|u\|_{L^{p^{\ast}}(M)}\leq C \, \Big( \|\nabla_{\! M} u\|_{L^p(M)}+\|Hu\|_{L^p(M)} \Big) \quad
  \text{for all }u\in C_c^1(M),
\end{equation}
where~$p^{\ast} := {np}/({n-p}),$~$\nabla_{\! M}$ is the tangential gradient on~$M$, and~$H$ is the mean curvature of~$M.$
\end{theorem}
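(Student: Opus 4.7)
The plan is to prove Theorem~\ref{thm:Michael-Simon Allard} by the classical two-step strategy: first establish the endpoint inequality at~$p = 1$, and then derive the general case~$p \in (1, n)$ by applying this endpoint inequality to a suitable power of~$u$ combined with H\"older's inequality.

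For the endpoint case, the fundamental geometric input is the tangential divergence theorem on~$M$: for every compactly supported~$C^1$ vector field~$X \colon M \to \R^{n+1}$,
\begin{equation*}
\int_{M} \dive_{M} X \dHn = - \int_{M} H \, (X \cdot \nu) \dHn,
\end{equation*}
where~$\nu$ denotes a unit normal to~$M$. Fixing a nonnegative~$u \in C_c^1(M)$ and a point~$x_0 \in M$, I would test this identity with the radial vector field $X(x) = u(x) \, (x - x_0) \, \eta\bigl( |x - x_0| / \rho \bigr)$ for a smooth nonincreasing cutoff~$\eta$. Expanding~$\dive_{M} X$ and differentiating in~$\rho$ produces an almost-monotonicity formula for the averaged quantity $\rho \mapsto \rho^{-n} \int_{M \cap B_\rho(x_0)} u \dHn$, with error terms controlled by $\rho^{1-n}$ times the~$L^1$ mass of~$|\nabla_{\! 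M} u| + |H u|$ on~$M \cap B_\rho(x_0)$. Letting~$\rho \to 0^{+}$ and using that the~$n$-density of the smooth hypersurface~$M$ at~$x_0$ equals one, I obtain the pointwise bound
\begin{equation*}
u(x_0) \le C \rho^{-n} \int_{M \cap B_\rho(x_0)} u \dHn + C \rho^{1-n} \int_{M \cap B_\rho(x_0)} \bigl( |\nabla_{\! M} u| + |H u| \bigr) \dHn.
\end{equation*}
Given $t > 0$ and $x_0 \in \{u > t\}$, a radius $\rho = \rho(x_0, t)$ balancing the two terms on the right yields a lower density estimate $t \rho^n \le C \int_{M \cap B_\rho(x_0)}(u + |\nabla_{\! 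M} u| + |Hu|) \dHn$. A Vitali covering of the superlevel set together with a layer-cake integration in~$t$ then delivers
\begin{equation*}
\| u \|_{L^{n/(n-1)}(M)} \le C \bigl( \| \nabla_{\! M} u \|_{L^1(M)} + \| H u \|_{L^1(M)} \bigr).
\end{equation*}

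To pass from~$p = 1$ to general~$p \in (1, n)$, I would apply the endpoint inequality above to~$v := |u|^\gamma$ with $\gamma := p(n-1)/(n-p) > 1$, chosen so that~$v^{n/(n-1)} = |u|^{p^\ast}$. A direct computation gives $\nabla_{\! M} v = \gamma \, |u|^{\gamma - 1} \sgn(u) \, \nabla_{\! M} u$ and~$H v = H |u|^\gamma$; H\"older's inequality on the right-hand side with conjugate exponents~$p$ and~$p/(p-1)$ produces a common factor of~$\| u \|_{L^{p^\ast}(M)}^{\gamma - 1}$, which can then be reabsorbed into the left-hand side to conclude~\eqref{eq:intro:classical MS}.

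The principal obstacle is the monotonicity/density estimate in the first step. What makes the theorem nontrivial is the \emph{universality} of~$C$, which must be independent of~$M$---of its geometry, its topology, and its extrinsic scale. Enforcing this universality forces one to rely exclusively on scale-invariant objects: the first variation formula, the unit density of a smooth hypersurface at each of its points, and the pure~$L^p$ norms appearing in the statement. The mean-curvature correction terms arising in the almost-monotonicity must be carefully tracked, as they are precisely what is absorbed by the extra summand~$\| Hu \|_{L^p(M)}$ on the right-hand side. Once this core estimate is in hand, the covering and Hölder bootstrap are routine.
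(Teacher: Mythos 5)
First, a point of reference: the paper does not prove Theorem~\ref{thm:Michael-Simon Allard} at all---it is quoted from Allard and Michael--Simon, with a pointer to \cite{Cabre-Miraglio} for a short proof---so your attempt can only be measured against the classical argument you are trying to reproduce. Your overall skeleton (first variation/tangential divergence theorem $\Rightarrow$ almost-monotonicity $\Rightarrow$ density estimate on superlevel sets $\Rightarrow$ covering and layer cake for $p=1$; then $p>1$ by applying the endpoint inequality to $|u|^{\gamma}$ with $\gamma=p(n-1)/(n-p)$, H\"older, and absorption) is indeed the standard route, and the second half is correct as written: the exponent bookkeeping $(\gamma-1)p/(p-1)=p^{\ast}$ and the absorption of $\|u\|_{L^{p^{\ast}}}^{\gamma-1}$ are exactly the classical reduction.

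The genuine gap is in the core step. The monotonicity computation does \emph{not} yield the pointwise bound $u(x_0)\le C\rho^{-n}\int_{M\cap B_\rho}u\dHn + C\rho^{1-n}\int_{M\cap B_\rho}(|\nabla_{\!M}u|+|Hu|)\dHn$ at a radius $\rho$ you are then free to choose: integrating the differential inequality for $\rho\mapsto\rho^{-n}\int_{M\cap B_\rho}u\dHn$ from $0$ to $\rho$ produces the \emph{cumulative} error $\int_0^{\rho}\tau^{-n}\big(\int_{M\cap B_\tau}(|\nabla_{\!M}u|+|Hu|)\dHn\big)d\tau$, equivalently (after integration by parts) a Riesz-potential term $\int_{M\cap B_\rho}|x-x_0|^{1-n}(|\nabla_{\!M}u|+|Hu|)\dHn$, and this is \emph{not} controlled by $\rho^{1-n}\int_{M\cap B_\rho}(|\nabla_{\!M}u|+|Hu|)\dHn$ when the mass concentrates at scales much smaller than $\rho$. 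In fact your claimed inequality is false already for $M=\R^n$, $H\equiv 0$: take $u$ a bump with $u(x_0)=1$ supported in $B_\varepsilon(x_0)$ and $\rho\gg\varepsilon$; both terms on the right are then $O((\varepsilon/\rho)^{n-1})$. Consequently the ``balance the two terms in $\rho$'' step has nothing to balance, and the resulting density estimate (which, incidentally, is stated in a dimensionally inhomogeneous form---the derivative and curvature terms must carry an extra factor $\rho$) is precisely the nontrivial selection/stopping-time lemma of Michael and Simon (or, alternatively, the mean-value inequality with the Riesz-potential error plus a covering/truncation argument to reach the strong $L^{n/(n-1)}$ norm). As it stands, the proposal assumes this heart of the proof rather than proving it; the rest of the outline is sound but rests on that missing lemma.
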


We refer the reader to the recent paper~\cite{Cabre-Miraglio} of Miraglio and the first author where, combining the ideas of~\cite{Allard,Michal Simon}, a quick and easy-to-read proof of Theorem~\ref{thm:Michael-Simon Allard} is provided.

Exactly as for the Euclidean Sobolev inequality, Theorem~\ref{thm:Michael-Simon Allard} can be deduced, using the coarea formula and H\"older's inequality, from the case when~$p=1$ and~$u=\chi_{E}$ is the characteristic function of a sufficiently regular subset~$E\subset M.$ For these choices, inequality~\eqref{eq:intro:classical MS} is an isoperimetric one and reads as
\begin{equation}
 \label{eq:intro:MS geometric version}
  |E|^{\frac{n-1}{n}}\leq C \left( \Per_M(E)+\int_E |H(x)| \, dx \right) ,
\end{equation}
where~$|E|$ stands for the~$n$-dimensional Hausdorff measure of~$E$,~$dx$ indicates the restriction of such measure to~$M$, and~$\Per_M(E)$ denotes the perimeter of~$E$ in~$M$.

We emphasize that the constant~$C$ does not depend on~$M$ and that therefore all the information about the geometry of~$M$ is captured by its mean curvature~$H$ appearing on the right-hand side of~\eqref{eq:intro:classical MS}. In particular, if~$M$ is a minimal surface, i.e., if~$H=0$, then estimate~\eqref{eq:intro:classical MS} holds true with only~$\|\nabla_{\! M} u\|_{L^p(M)}$ appearing on its right-hand side, exactly as in the Euclidean case. Such a universal Sobolev inequality on minimal surfaces was first obtained by Bombieri, De Giorgi, and Miranda~\cite{BDM}---and consequently prior to that of Michael-Simon and Allard.

To determine the best constant in~\eqref{eq:intro:MS geometric version} remained as an open question for many years, even when~$H\equiv 0.$ In a very recent paper, Brendle~\cite{Brendle} has proved that, in every minimal surface,~\eqref{eq:intro:MS geometric version} holds true taking~$C$ to be the isoperimetric constant in~$\R^n$. Moreover, equality is achieved only by flat~$n$-dimensional balls. Brendle's argument is a far-reaching extension of the proof of the Euclidean isoperimetric inequality via the Aleksandrov-Bakelman-Pucci method found by the first author---see,~e.g.,~\cite{Cabre-DCDS}.

Another interesting class of hypersurfaces are those which are compact (with no boundary). In this case, one can plug~$u\equiv 1$ into~\eqref{eq:intro:classical MS}. This leads to an estimate from below for the integral of the modulus of the mean curvature of~$M$ in terms of the measure of~$M$:
\begin{equation}
 \label{eq:intro:Alexandrov Fenchel nonconvex}
  |M|^{\frac{n-1}{n}}\leq C \int_M |H(x)| \, dx.
\end{equation}
For a convex hypersurface~$M$---that is, when~$M=\delomega$ is the boundary of a convex subset~$\Omega$ of~$\R^{n+1}$---estimate~\eqref{eq:intro:Alexandrov Fenchel nonconvex} is a particular case of the Aleksandrov-Fenchel inequalities. In this convex case, it is known to hold with the optimal constant---which is achieved by all spheres~$M=\partial B_R(x)$.  However, it is still an open problem to determine the optimal constant for general compact hypersurfaces. See~\cite{Aleksandrov 1,Aleksandrov 2} and also Chang and Wang~\cite{Alice Chang} for a recent survey on this topic.

\subsection{A fractional Michael-Simon and Allard inequality on convex hypersurfaces}

It is a well-known fact that an appropriate Sobolev embedding holds for Sobolev spaces of fractional order in Euclidean space. Indeed, for every~$s \in (0,1)$, every integer~$n\geq 1$, and every~$p\in [1,n/s)$, there exists a constant~$C$ depending only on~$n$,~$p$, and~$s$, such that
\begin{equation}\label{eq:intro:frac Sobolev euclidean}
  \|u\|_{L^{p^{\ast}_s}(\R^n)}\leq C \,[u]_{W^{s,p}(\R^n)}\quad\text{for all }u\in W^{s, p}(\R^n).
\end{equation}
Here~$W^{s, p}(\R^n)$ is the fractional Sobolev space of functions~$u \in L^p(\R^n)$ for which the Gagliardo semi-norm
\begin{equation} \label{Wspsemidef}
  \quad [u]_{W^{s,p}(\R^n)} := \left(\int_{\R^n}\int_{\R^n}\frac{|u(x)-u(y)|^p}{|x-y|^{n+s p}} \, dx dy\right)^{\frac{1}{p}}
\end{equation}
is finite and
$$
p^{\ast}_s := \frac{np}{n- s p}
$$
is the relevant critical Sobolev exponent.

Historically, fractional Sobolev spaces were introduced to measure the smoothness of functions defined on~\emph{curved} hypersurfaces of Euclidean spaces, with special interest on boundaries of bounded open Lipschitz sets~$\Omega \subset \R^{n + 1}$. Indeed, Aronszajn~\cite{A55}, Slobodecki{\u{\i}} and Babi{\u{c}}~\cite{SB56}, and Gagliardo~\cite{G57} showed that, for~$p > 1$, the trace space of~$W^{1, p}(\Omega)$ is~$W^{(p - 1)/p, p}(\partial \Omega)$. The fractional Sobolev space~$W^{s, p}(M)$ on a hypersurface~$M\subset\R^{n+1}$ can be defined, similarly to the Euclidean case, as the collection of~$L^p(M)$ functions having finite semi-norm~$[\, \cdot \,]_{W^{s, p}(M)}.$ This semi-norm is defined as in~\eqref{Wspsemidef} by replacing the domain of integration~$\R^n$ with~$M$, writing~$dx$ to mean integration with respect to the~$n$-dimensional Hausdorff measure, and understanding~$|x-y|$ to be the standard Euclidean distance in~$\R^{n+1}$:
$$
  [u]_{W^{s,p}(M)}:=\left(\int_M\int_M\frac{|u(x)-u(y)|^p}{|x-y|^{n+sp}} \, dxdy\right)^{\frac{1}{p}}.
$$

In the current paper, we study the existence of a version of the Michael-Simon and Allard inequality for fractional Sobolev spaces on hypersurfaces of Euclidean space. Our interest originates from the theory of nonlocal minimal surfaces. Given~$\alpha \in (0, 1)$, nonlocal~$\alpha$-minimal surfaces are defined as being (the boundaries of) the critical points of the fractional~$\alpha$-perimeter functional
\begin{equation}\label{def:alpha perimeter flat}
\R^{n + 1} \supset \Omega \longmapsto \Per_\alpha(\Omega) := \frac{1}{2} \, [\chi_\Omega]_{W^{\alpha, 1}(\R^{n + 1})} = \int_\Omega \int_{\R^{n + 1} \setminus \Omega} \frac{dx dy}{|x - y|^{n + 1 + \alpha}}.
\end{equation}
They were introduced by Caffarelli, Roquejoffre, and Savin in~\cite{Caffarelli Roquejoffre Savin}, and are related to phase-transition models with strongly nonlocal interactions. Such critical points are characterized by the equation
$$
H_\alpha[\Omega] = 0 \quad \mbox{on } \partial \Omega,
$$
where
\begin{align} \label{Halphadef}
H_\alpha(x) =  H_\alpha[\Omega](x) := \hspace{3pt} & \frac{\alpha}{2} \, \PV \int_{\R^{n+1}}\frac{\chi_{\R^{n + 1} \setminus \Omega}(y)-\chi_{\Omega}(y)}{|y-x|^{n+1+\alpha}} \, dy
\\
= \hspace{3pt} &
 \label{eq:intro:def of H alpha}
   \PV \int_{\delomega} \frac{(y - x) \cdot \nu(y)}{|y - x|^{n + 1 + \alpha}} \, dy\qquad\text{for }x\in\delomega
\end{align}
is the so-called nonlocal (or fractional)~$\alpha$-mean curvature of~$\Omega$ at the point~$x \in \partial \Omega$ and~$\nu$ denotes the exterior unit normal vector to~$\partial\Omega$. Note that the last equality follows from the divergence theorem.
It is known that these surfaces satisfy a density estimate
\begin{equation}
 \label{eq:intro:density in Cabre Cozzi thm}
  |M\cap B_R(x_0)|\geq c_{\ast} R^n\quad\text{for all~$x_0\in M := \partial \Omega$ and~$R>0$},
\end{equation}
as in the case of standard minimal surfaces. Here, the positive constant~$c_{\ast}$ depends only on~$n$ and~$\alpha$.

It was the study of nonlocal $\alpha$-minimal surfaces that led to the first result on a fractional Michael-Simon inequality, obtained by the first two authors. In~\cite{Cabre-Cozzi}, we obtained the following new universal fractional Sobolev inequality on nonlocal~$\alpha$-minimal surfaces, as well as on classical minimal surfaces. We established it by extending a beautiful proof of the fractional Sobolev inequality in Euclidean space due to Brezis~\cite{brezis}. In~\cite{Cabre-Cozzi} such result played a central role in the proof of a gradient estimate for nonlocal minimal graphs.

\begin{theorem}[Cabr\'e and Cozzi \cite{Cabre-Cozzi}]
\label{thm:intro:Cabre Cozzi fract Sobolev}
Let~$n\geq 1$ be an integer,~$s \in (0,1)$, and~$p\geq 1$ be such that~$n> s p$. Let~$M\subset\R^{n+1}$ be either a nonlocal~$\alpha$-minimal surface or a classical minimal surface---more generally, it suffices to assume that~$M\subset\R^{n+1}$ is a set with locally finite~$n$-dimensional Hausdorff measure that satisfies~\eqref{eq:intro:density in Cabre Cozzi thm} for some positive constant~$c_{\ast}$.

Then, there exists a constant~$C$ depending only on~$n$,~$s$,~$p$, and~$c_{\ast}$, such that
\begin{equation}
 \label{eq:intro:Cabre Cozzi frac Sob ineq}
  \|u\|_{L^{p^{\ast}_s}(M)}\leq C \, [u]_{W^{s,p}(M)}\quad\text{for all } u\in W^{s, p}(M).
\end{equation}
\end{theorem}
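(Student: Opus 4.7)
The plan is to adapt to the present manifold setting the elegant proof given by Brezis~\cite{brezis} for the fractional Sobolev inequality on $\R^n$, substituting the one-sided density bound~\eqref{eq:intro:density in Cabre Cozzi thm} for the identity $|B_r| = \omega_n r^n$ available in the flat case.

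The first and main step is the weak-type estimate $\sup_{t > 0} t^{p^*_s} |E_t| \leq C [u]_{W^{s,p}(M)}^{p^*_s}$, where $E_t := \{x \in M : |u(x)| > t\}$ and $F_t := M \setminus E_t$. A preliminary truncation reduces to the case where $u$ is bounded with $|E_t|$ finite for every $t>0$ and vanishing for $t$ large. Fix $t > 0$ and, for each $x \in E_{2t}$, let $r_x$ be the smallest radius for which $F_t$ occupies at least half the mass of $M \cap B_{r_x}(x)$. Such a radius exists because~\eqref{eq:intro:density in Cabre Cozzi thm} forces $|M \cap B_r(x)| \to \infty$ as $r \to \infty$, and minimality combined with~\eqref{eq:intro:density in Cabre Cozzi thm} yields the crucial bound $r_x \leq (2|E_t|/c_*)^{1/n}$. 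Since $|u(x) - u(y)| \geq t$ whenever $x \in E_{2t}$ and $y \in F_t$, restricting the Gagliardo integral to the pairs $(x,y) \in E_{2t} \times (F_t \cap M \cap B_{r_x}(x))$ leads to
\[
  [u]_{W^{s,p}(M)}^p \geq C_1 \, t^p \, |E_t|^{-sp/n} \, |E_{2t}|.
\]
This recursion, treated via a standard self-improvement argument exploiting that $\theta := sp/n < 1$, yields the weak-type bound.

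The second step is to upgrade the weak-type estimate to the strong-type one through a standard truncation procedure: applying the weak-type bound to $u^\lambda := (|u| - \lambda)_+ \sign u$ for every $\lambda > 0$, and integrating in $\lambda$ against the layer-cake representation of $\|u\|_{L^{p^*_s}(M)}^{p^*_s}$. The monotonicity $[u^\lambda]_{W^{s,p}(M)} \leq [u]_{W^{s,p}(M)}$ follows from the fact that $\tau \mapsto (|\tau|-\lambda)_+ \sign \tau$ is $1$-Lipschitz.

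The hard part is the first step, and specifically the selection of the scale~$r_x$. Only the one-sided lower density estimate~\eqref{eq:intro:density in Cabre Cozzi thm} is assumed on $M$, so no upper bound on $|M \cap B_r(x)|$ is a priori available; the minimality argument above nevertheless converts this one-sided information into the upper bound $r_x \leq C |E_t|^{1/n}$, and it is precisely this bound that produces the correct critical exponent $p^*_s$ upon rearrangement. The remaining ingredients---the self-improvement step, the truncation for strong type, and the passage from bounded compactly supported functions to general $u \in W^{s,p}(M)$ via Fatou's lemma---are routine.
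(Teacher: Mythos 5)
Your first step is essentially correct, and it is a nice route to a weak-type bound: the choice of the smallest radius $r_x$ at which $F_t$ fills half of $M\cap B_{r_x}(x)$, the bound $r_x\le (2|E_t|/c_\ast)^{1/n}$ forced by~\eqref{eq:intro:density in Cabre Cozzi thm}, and the resulting recursion $|E_{2t}|\le C\,t^{-p}\,[u]_{W^{s,p}(M)}^p\,|E_t|^{sp/n}$ all check out (modulo harmless care about whether the infimal radius is attained, and using $u\in L^p(M)$ so that $|E_t|<+\infty$ and the absorption in the self-improvement is legitimate). The genuine gap is in your second step. Applying the weak-type bound to the one-sided truncations $u^\lambda=(|u|-\lambda)_+\sgn u$ and using only the monotonicity $[u^\lambda]_{W^{s,p}(M)}\le[u]_{W^{s,p}(M)}$ yields, for all $\lambda,\mu>0$, the estimate $|\{|u|>\lambda+\mu\}|\le C\,[u]_{W^{s,p}(M)}^{p^\ast_s}\mu^{-p^\ast_s}$, which contains no more information than the weak-type bound for $u$ itself (let $\lambda\to0$). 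Feeding this into the layer-cake representation of $\|u\|_{L^{p^\ast_s}(M)}^{p^\ast_s}$ produces an integral proportional to $\int_0^{+\infty}\tau^{-1}\,d\tau$, which diverges: a weak $(p,p^\ast_s)$ bound never integrates to the strong bound, and the very monotonicity you invoke is what makes the extra parameter $\lambda$ useless. So the "standard truncation" as you describe it fails.

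What is missing is an additivity of the Gagliardo seminorm across level bands. The standard repair is Maz'ya's truncation principle: work with the band truncations $u_k:=\min\bigl((|u|-2^k)_+,2^k\bigr)$, note that $\{|u|>2^{k+1}\}\subset\{u_k\ge 2^k\}$, apply your weak-type bound to each $u_k$, and sum over $k\in\Z$ using the telescoping inequality $\sum_{k\in\Z}[u_k]_{W^{s,p}(M)}^p\le[u]_{W^{s,p}(M)}^p$ (for each pair $(x,y)$ the differences $u_k(x)-u_k(y)$ have the same sign and sum to $|u(x)|-|u(y)|$, and $p\ge1$) together with the embedding $\ell^p\hookrightarrow\ell^{p^\ast_s}$. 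Equivalently, you can bypass the weak-type statement altogether and sum your level-set estimate dyadically; this is exactly the slicing machinery of Di Nezza--Palatucci--Valdinoci~\cite{HitchhikersG} that the present paper adapts in Section~\ref{section:fromsetstofunctions} (Lemmas~\ref{lemma:ineq of sums ak and akplus1} and~\ref{Analogue Lemma 6.3 in HG}), of which your recursion is the continuous analogue. Note finally that the proof the paper actually refers to for this theorem, in~\cite{Cabre-Cozzi}, follows Brezis~\cite{brezis} and avoids truncation entirely: one compares $u(x)$ with its average over $M\cap B_r(x)$, applies H\"older (only negative powers of $|M\cap B_r(x)|$ appear, so the one-sided estimate~\eqref{eq:intro:density in Cabre Cozzi thm} suffices), optimizes in $r$, and absorbs $\|u\|_{L^{p^\ast_s}(M)}$ after a preliminary reduction guaranteeing its finiteness.
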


Also recently, and independently from~\cite{Cabre-Cozzi}, inequality~\eqref{eq:intro:Cabre Cozzi frac Sob ineq} has been obtained by Dyda et al.~\cite{DILTV18} as part of a more general family of Hardy-Sobolev type inequalities for weighted fractional Sobolev spaces defined on metric measure spaces---see~\cite[Theorem~5.3]{DILTV18}. However, when restricted to a hypersurface~$M$ of Euclidean space, their inequalities hold under stronger assumptions than the density estimate \eqref{eq:intro:density in Cabre Cozzi thm}---namely, a connectivity type hypothesis on~$M$ and the validity of quantitative doubling and reverse doubling conditions on the~$n$-dimensional Hausdorff measure restricted to~$M$, in addition to~\eqref{eq:intro:density in Cabre Cozzi thm}.

In light of Theorem~\ref{thm:intro:Cabre Cozzi fract Sobolev} and the classical Michael-Simon inequality, it is conceivable that~\eqref{eq:intro:Cabre Cozzi frac Sob ineq} could be extended to general hypersurfaces by including an additional remainder~$L^p$-term involving the nonlocal mean curvature. The following result---which is the main contribution of our paper---shows that this is indeed the case for convex hypersurfaces. The question remains open in the non-convex case.

In order to state our theorem, note first that if~$\Omega\subset\R^{n+1}$ is an open convex set, then~$\partial\Omega$ is a Lipschitz hypersurface and thus, by Rademacher's theorem, differentiable at almost every point~$x\in\partial\Omega.$ On the other hand, by either expression~\eqref{Halphadef} or~\eqref{eq:intro:def of H alpha}, we see that~$H_{\alpha}(x)$ is a well-defined quantity in~$[0,+\infty],$ since~$\Omega$ is convex. Furthermore, by Aleksandrov's theorem,~$\partial\Omega$ is (pointwise) twice differentiable at almost every~$x\in\partial\Omega.$ At these points, the nonlocal mean curvature~$H_{\alpha}(x)$ is finite.

Note also that every bounded open convex set~$\Omega\subset\R^{n+1}$ has finite perimeter, that is,~$|\partial \Omega|<+\infty$. This follows from the classical isodiametric inequality for the perimeter of convex sets, stated in Proposition~\ref{RSineprop} and proved in Appendix~\ref{appendix:rosenthal-szasz}. This fact will be important within some of our proofs, to avoid the indetermination~$0\cdot\infty$.

We also need to define the nonlocal~$\alpha$-perimeter functional on hypersurfaces, as the natural generalization of the Euclidean nonlocal perimeter functional~\eqref{def:alpha perimeter flat}. Given a hypersurface~$M \subset \R^{n + 1}$ and a subset~$F \subset M$, for~$s \in (0, 1)$ we define
$$
\Per_{M, s}(F) := \int_{F} \int_{M \setminus F} \frac{dx dy}{|x - y|^{n + s}}.
$$

\begin{theorem}
\label{thm:intro:Convex MS for functions} Let~$n \ge 1$ be an integer, $\alpha\in(0,1),$ $s\in(0,1),$  and~$p\geq 1$ be such that~$n > s p$. Let~$\Omega\subset\R^{n+1}$ be an open convex set. Then, there exists a constant~$C$ depending only on~$n$,~$\alpha$,~$s$, and~$p$, such that
\begin{equation} \label{eq:intro:sAMSonconvex}
  \|u\|_{L^{p^{\ast}_s}(\delomega)}\leq C\left(\frac{1}{2}\int_{\delomega}\int_{\delomega} \frac{|u(x)-u(y)|^p}{|x-y|^{n+ s p}} \, dx dy +\int_{\delomega}H_{\alpha}(x)^{\frac{s p}{\alpha}} |u(x)|^p \, dx
  \right)^{\frac{1}{p}}
\end{equation}
holds true for every~$u \in W^{s, p}(\partial \Omega)$, where~$p_s^{\ast}=np/(n-sp)$.

As a consequence, taking~$p=1$ and $u$ to be the characteristic function of a set~$E,$
we have
\begin{equation}\label{old theorem frac MS for sets}
  |E|^{\frac{n - s}{n}} \le C \left( \Per_{\delomega, s}(E) + \int_E H_{\alpha}(x)^{\frac{s}{\alpha}} \, {dx} \right)
\end{equation}
for every measurable subset~$E \subset \delomega$ with finite measure, where~$C$ is a constant depending only on~$n$,~$\alpha$, and~$s$. In particular, if~$\partial \Omega$ has finite measure, the choice~$E=\delomega$ leads to
\begin{equation}
 \label{old thm frac Aleks-Fench}
  |\partial\Omega|^{\frac{n-s}{n}}\leq C\int_{\partial\Omega}H_{\alpha}(x)^{\frac{s}{\alpha}} \, {dx}.
\end{equation}
\end{theorem}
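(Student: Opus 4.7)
My plan is to prove the three conclusions top-to-bottom, reducing each to the previous. The heart of the argument is the geometric inequality~\eqref{old theorem frac MS for sets}; from it, the functional inequality~\eqref{eq:intro:sAMSonconvex} and the Aleksandrov--Fenchel-type estimate~\eqref{old thm frac Aleks-Fench} follow by standard decomposition.

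\textbf{The geometric inequality.} I would extend the Brezis-type proof of Theorem~\ref{thm:intro:Cabre Cozzi fract Sobolev} from~\cite{Cabre-Cozzi}, in which the density estimate~\eqref{eq:intro:density in Cabre Cozzi thm} is the sole geometric input. Since a general convex hypersurface need not satisfy a uniform density estimate, the key new ingredient is a \emph{conditional density estimate}: there exist constants $c_0,\kappa>0$ depending only on $n$ and $\alpha$ such that, for every $x_0\in\partial\Omega$ and every $R>0$,
\begin{equation*}
  |\partial\Omega \cap B_R(x_0)| \ge c_0 R^n \qquad \text{whenever } R \le \kappa\, H_\alpha(x_0)^{-1/\alpha}.
\end{equation*}
To prove this, let $\Pi_{x_0}\supset\Omega$ be a supporting half-space at $x_0$. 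Subtracting the vanishing odd-symmetric contribution of $\Pi_{x_0}$ in~\eqref{Halphadef} gives the representation
\begin{equation*}
  H_\alpha(x_0) = \alpha \int_{\Pi_{x_0}\setminus\Omega} \frac{dy}{|y-x_0|^{n+1+\alpha}}\ge 0.
\end{equation*}
Now suppose $|\partial\Omega\cap B_R(x_0)|\ll R^n$. Since $\Omega\subset\Pi_{x_0}$ yields $|B_R(x_0)\setminus\Omega|\ge|B_R(x_0)\setminus\Pi_{x_0}|=\tfrac12|B_R(x_0)|$, the relative isoperimetric inequality inside $B_R(x_0)$ applied to $\Omega\cap B_R(x_0)$ forces a volume deficit $|\Omega\cap B_R(x_0)|\ll R^{n+1}$. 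Consequently $\Pi_{x_0}\cap B_R(x_0)\setminus\Omega$ has volume comparable to $R^{n+1}$, whence $H_\alpha(x_0)\ge cR^{-\alpha}$, which is the contrapositive of the claim. With the conditional density available, for each $x_0\in E$ I set $R(x_0):=\kappa\,H_\alpha(x_0)^{-1/\alpha}$ and adapt~\cite{Cabre-Cozzi}: splitting the $y$-integration at the scale $R(x_0)$, the short-range part contributes to $\Per_{\partial\Omega,s}(E)$ via the kernel $|x-y|^{-n-s}$, while the tail produces $\int_E H_\alpha^{s/\alpha}\,dx$ through the relation $R(x_0)^{-s}\sim H_\alpha(x_0)^{s/\alpha}$.

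\textbf{Passage to general functions.} For $p=1$ and $u\ge 0$, I would apply~\eqref{old theorem frac MS for sets} to each super-level set $E_t:=\{u>t\}$ and integrate in $t$ using the coarea-type identities
\begin{equation*}
  \tfrac12\,[u]_{W^{s,1}(\partial\Omega)}=\int_0^\infty\Per_{\partial\Omega,s}(E_t)\,dt,\quad \int_{\partial\Omega}H_\alpha^{s/\alpha}\,u\,dx=\int_0^\infty\!\int_{E_t}H_\alpha^{s/\alpha}\,dx\,dt,
\end{equation*}
followed by a Bliss/Hardy bootstrap to upgrade the resulting $L^1$-bound to the $L^{n/(n-s)}$-bound on $u$. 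For $p>1$, apply the $p=1$ inequality to $|u|^\gamma$ with $\gamma:=p(n-s)/(n-sp)$, using $\bigl||a|^\gamma-|b|^\gamma\bigr|\le\gamma(|a|^{\gamma-1}+|b|^{\gamma-1})|a-b|$ and Hölder. Finally,~\eqref{old thm frac Aleks-Fench} follows from~\eqref{old theorem frac MS for sets} with $E=\partial\Omega$, noting that $\Per_{\partial\Omega,s}(\partial\Omega)=0$ and $|\partial\Omega|<+\infty$ by Proposition~\ref{RSineprop}.

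\textbf{Main obstacle.} The crux is the conditional density estimate, which quantitatively converts the qualitative inclusion $\Omega\subset\Pi_{x_0}$ into a definite lower bound on $H_\alpha(x_0)$ whenever $\partial\Omega$ is thin near $x_0$. The two-step passage from a surface-area deficit for $\partial\Omega\cap B_R(x_0)$ to a volume deficit for $\Omega\cap B_R(x_0)$ (via the relative isoperimetric inequality in a ball), and then to a half-space volume deficit, is the technical bottleneck.
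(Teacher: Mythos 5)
Your conditional density estimate is a nice, correct idea, and it gives a genuinely different proof of the key pointwise inequality than the paper uses. You pass from a surface-measure deficit $|\partial\Omega\cap B_R(x_0)|\ll R^n$ to a volume deficit $\mathcal H^{n+1}(\Omega\cap B_R(x_0))\ll R^{n+1}$ by the relative isoperimetric inequality (the minimum in that inequality is $\mathcal H^{n+1}(\Omega\cap B_R)$ since $\Omega\subset\Pi_{x_0}$), and then read off $H_\alpha(x_0)\gtrsim R^{-\alpha}$ from the half-space representation $H_\alpha(x_0)=\alpha\int_{\Pi_{x_0}\setminus\Omega}|y-x_0|^{-n-1-\alpha}\,dy$. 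The paper (Proposition~\ref{pointwiseconvexMSprop}) instead gets the same dichotomy out of the Gauss-type identity of Lemma~\ref{lemma:localized double layer pot}, $\tfrac{|\mathbb S^n|}{2}=\int_{\partial\Omega\cap B_R(x)}\tfrac{(y-x)\cdot\nu(y)}{|y-x|^{n+1}}\,dy+\tfrac{|\Omega\cap\partial B_R(x)|}{R^n}$, absorbing the boundary term via the cone/reverse-perimeter bound of Lemma~\ref{isoplem} in the low-density case, and using the Rosenthal--Sz\'asz bound~\eqref{eq:perestcor} in the high-density case. The two routes are morally parallel but you reach the low-density case by a different geometric mechanism. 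Your sketch of how to combine the conditional density estimate with a scale-splitting à la \cite{Cabre-Cozzi} is vague, but it can be made to work: once $R$ is chosen so that $|E|=|\partial\Omega\cap B_R(x)|$ (Lemma~\ref{lemma:E can be delomega cap BR} and Lemma~\ref{Axcontlem}), either $|E|\le c_0(R_0/T)^n$ with $R_0:=\kappa H_\alpha(x)^{-1/\alpha}$, in which case the annulus $B_{TR}\setminus B_R$ with $TR\le R_0$ has density and $R^{-s}\gtrsim|E|^{-s/n}$, giving the perimeter term; or $|E|>c_0(R_0/T)^n$, which yields $|E|^{-s/n}\lesssim H_\alpha(x)^{s/\alpha}$ directly. (One small point: the ``Bliss/Hardy bootstrap'' you invoke for $p=1$ is just Minkowski's integral inequality applied to the layer-cake formula $u=\int_0^\infty\chi_{\{u>t\}}\,dt$.)

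However, there is a genuine gap in your passage from $p=1$ to $p>1$. Applying the $p=1$ inequality to $|u|^\gamma$ with $\gamma=p(n-s)/(n-sp)$ does give $\||u|^\gamma\|_{L^{n/(n-s)}}=\|u\|_{L^{p^*_s}}^\gamma$ on the left, but the right-hand side then contains $[\,|u|^\gamma]_{W^{s,1}(\partial\Omega)}$, and after the elementary inequality $\big||a|^\gamma-|b|^\gamma\big|\le\gamma(|a|^{\gamma-1}+|b|^{\gamma-1})|a-b|$ you must control
\begin{equation*}
\int_{\partial\Omega}\int_{\partial\Omega}\frac{|u(x)|^{\gamma-1}\,|u(x)-u(y)|}{|x-y|^{n+s}}\,dx\,dy .
\end{equation*}
H\"older with exponents $p$ and $p'=p/(p-1)$, designed to extract $[u]_{W^{s,p}}$, splits the kernel as $|x-y|^{-(n+sp)/p}\cdot|x-y|^{-n/p'}$, and the second factor produces the divergent integral $\int_{\partial\Omega}\tfrac{dy}{|x-y|^n}$; no other choice of exponents repairs this, since the residual power lands exactly on the critical exponent $n$. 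This is precisely the obstruction the authors flag in Section~\ref{section:fromsetstofunctions}: it is not known, even in $\R^n$, how to derive the fractional Sobolev inequality for $p>1$ from the isoperimetric ($p=1$) case, in contrast with the local Sobolev inequality where the chain rule is available. The paper instead proves the $p\ge1$ case directly by the dyadic slicing argument of Savin--Valdinoci and Di~Nezza--Palatucci--Valdinoci (Lemmas~\ref{lemma:ineq of sums ak and akplus1} and~\ref{Analogue Lemma 6.3 in HG}), which uses the pointwise inequality~\eqref{pointwiseconvexMSine} on each super-level set $A_{i-1}=\{u>2^{i-1}\}$ and never needs to convert $[\,|u|^\gamma]_{W^{s,1}}$ into $[u]_{W^{s,p}}$. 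You should replace your $p>1$ step with this slicing scheme.
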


Note that no relation between the parameters~$s$ and~$\alpha$ is assumed within the theorem.

Inequality  \eqref{old thm frac Aleks-Fench} is a fractional extension of the classical Aleksandrov-Fenchel type inequality~\eqref{eq:intro:Alexandrov Fenchel nonconvex}. In \eqref{old thm frac Aleks-Fench}, it is still unknown what the best constant is. In addition, its validity in non-convex surfaces---after replacing $H_{\alpha}$ by $|H_{\alpha}|$---remains as an open question.

Observe that neither Theorem~\ref{thm:intro:Cabre Cozzi fract Sobolev} is a particular case of Theorem~\ref{thm:intro:Convex MS for functions} (since the former makes no convexity assumption, and thus includes classical and nonlocal minimal surfaces), nor one can deduce the latter from the former. 
Indeed, Theorem~\ref{thm:intro:Convex MS for functions} holds not only for unbounded convex sets but also for bounded ones, and in this last case, the density estimate~\eqref{eq:intro:density in Cabre Cozzi thm} cannot hold because~$|\delomega\cap B_R(x_0)|=|\delomega|$ for all sufficiently large balls~$B_R(x_0)$. Note also that, by convexity, the fractional mean curvature~$H_\alpha$ is positive at all points of~$\partial \Omega$ except if~$\partial \Omega$ is a hyperplane.

\subsection{An application to the fractional mean curvature flow of convex sets}

In Section~\ref{section:fractional mean curv flow} we give an application of our results to get a new bound on the maximal time of existence for the smooth fractional mean curvature flow of convex hypersurfaces. For this, we will use the pointwise inequality~\eqref{eq:intro:pointwise ineq delomega} reported below---which is a key tool within the proof of our main result, Theorem 1.3--- as well as the classical Michael-Simon inequality.

Without entering into regularity issues, a family of open sets~$\{ \Omega_t \}_{t \ge 0}$ evolves by fractional~$\alpha$-mean curvature if the inner normal velocity at a point~$x \in \delomega_t$ is equal to the fractional~$\alpha$-mean curvature of~$\Omega_t$ at~$x$. This flow has been investigated recently in several works. The existence and uniqueness of viscosity solutions to the generalized level set flow was obtained by Imbert~\cite{Imbert level set}. Julin and La Manna~\cite{JulinLaManna} established that, if the initial set~$\Omega_0$ is bounded and of class~$C^2$, then~$\Omega_t$ is smooth for sufficiently small times~$t$. Thus, the time
\begin{equation}\label{max-time}
T^\ast := \sup \left\{ t > 0 : \Omega_\tau \mbox{ is non-empty and has~$C^2$ boundary for all~} \tau\in [0,t) \right\}
\end{equation}
is positive. The sets~$\Omega_t$ will become empty in finite time, possibly developing singularities prior to extinction. Indeed, S\'aez and Valdinoci~\cite[Corollary 7]{Saez Valdinoci} have shown that
\begin{equation} \label{SaezVal_bound}
T^\ast \le C \diam(\Omega_0)^{1 + \alpha},
\end{equation}
for some constant~$C$ depending only on~$n$ and~$\alpha$, whereas an example in which singularities arise before extinction for a non-convex initial datum has been produced by Cinti, Sinestari, and Valdinoci~\cite{CintiSinestrariValdinoci}.\footnote{We stress that~$T^\ast$ is the maximal time for which the fractional~$\alpha$-mean curvature flow originating from a $C^{2}$ convex set~$\Omega_0$ remains $C^{2}$. Due to the possible formation of singularities, this time might be in principle smaller than the extinction time~$T_e := \sup \left\{ t > 0 : \Omega_t \mbox{ is non-empty} \right\}$ of the generalized level set flow considered, e.g., in~\cite{Imbert level set,Chambolle Novaga Ruffini}. Since, for $\Omega_{0}$ convex, this possibility has not been ruled out at the current time (in contrast with the case of convex sets evolving by classical mean curvature flow~\cite{huisken,Evans-Spruck}), we keep this distinction. We also point out that,  via results and ideas from~\cite{CDNV19,Imbert level set},
the upper bound on~$T^\ast$ provided by~\cite{Saez Valdinoci} can be actually improved to an estimate on the extinction time~$T_e$, regardless of the convexity of the initial datum.
\label{TastTefoot}}

When~$\Omega_0$ is convex, then each~$\Omega_t$ is convex as well, as shown by Chambolle, Novaga, and Ruffini~\cite{Chambolle Novaga Ruffini}. Thanks to this observation, by combining the pointwise nonlocal estimate~\eqref{eq:intro:pointwise ineq delomega} with the classical Michael-Simon inequality of Theorem~\ref{thm:Michael-Simon Allard}, we establish the following result.

\begin{theorem}
\label{thm:extinction time}
Let~$n\geq 1$,~$\alpha\in (0,1)$, and~$\Omega_0\subset\R^{n+1}$ be a bounded open convex set with~$C^2$ boundary. Let~$\left\{{\delomega}_t\right\}$ be the flow of hypersurfaces moving by fractional mean curvature~$H_{\alpha}.$ Then, the maximal time~$T^{\ast}$ defined by \eqref{max-time} satisfies
\begin{equation} \label{eq:boundforTast}
  T^{\ast}\leq C \hspace{1pt} |\partial\Omega_0|^{\frac{1+\alpha}{n}},
\end{equation}
for some constant~$C$ depending only on~$n$ and~$\alpha.$
\end{theorem}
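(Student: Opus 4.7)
The plan is to derive and integrate an ordinary differential inequality for the area $V(t) := |\partial \Omega_t|$, using three ingredients in sequence: the first variation of area, the pointwise comparison \eqref{eq:intro:pointwise ineq delomega} between the classical and the fractional mean curvatures on convex hypersurfaces, and the classical Michael-Simon inequality of Theorem~\ref{thm:Michael-Simon Allard} with $u\equiv 1$. For $t \in [0, T^*)$ the set $\Omega_t$ is bounded, open, and $C^2$; by~\cite{Chambolle Novaga Ruffini} it is also convex, so $V(t) < \infty$ and $V$ is smooth in $t$. The evolution $\dot x = -H_\alpha(x) \nu(x)$ and the standard first variation of area formula then yield
$$V'(t) = -\int_{\partial \Omega_t} H(x) \hspace{1pt} H_\alpha(x) \, dx,$$
where $H \ge 0$ denotes the classical (sum-of-principal-curvatures) mean curvature.

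Next, I invoke the pointwise inequality \eqref{eq:intro:pointwise ineq delomega}, which on a convex hypersurface provides a bound of the form $H(x) \le C \hspace{1pt} H_\alpha(x)^{1/\alpha}$, or equivalently $H_\alpha(x) \ge c \hspace{1pt} H(x)^{\alpha}$. Multiplying by $H(x) \ge 0$ and integrating gives
$$V'(t) \le - c \int_{\partial \Omega_t} H(x)^{1+\alpha} \, dx.$$
H\"older's inequality with conjugate exponents $1+\alpha$ and $(1+\alpha)/\alpha$ yields
$$\int_{\partial \Omega_t} H \, dx \le \left( \int_{\partial \Omega_t} H^{1+\alpha} \, dx\right)^{\frac{1}{1+\alpha}} V(t)^{\frac{\alpha}{1+\alpha}}.$$
When $n \ge 2$, Theorem~\ref{thm:Michael-Simon Allard} applied with $p=1$ and $u \equiv 1$ on the compact $\partial \Omega_t$ gives $V(t)^{(n-1)/n} \le C \int H$; when $n = 1$, Fenchel's total curvature identity for simple closed convex plane curves gives $\int H \, dx = 2\pi$, which serves the same purpose. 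In either case $\int H \ge c \hspace{1pt} V(t)^{(n-1)/n}$, and combining with the H\"older bound I obtain
$$\int_{\partial \Omega_t} H^{1+\alpha} \, dx \ge c \hspace{1pt} V(t)^{\frac{n-1-\alpha}{n}}.$$

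Substituting, I arrive at the ODE comparison $V'(t) \le - c \hspace{1pt} V(t)^{(n-1-\alpha)/n}$. Since $\tfrac{1+\alpha-n}{n} + \tfrac{n-1-\alpha}{n} = 0$, this rearranges into $\tfrac{d}{dt} V(t)^{(1+\alpha)/n} \le - c$. Integrating over $[0, t]$ for any $t < T^*$ gives $V(t)^{(1+\alpha)/n} \le V(0)^{(1+\alpha)/n} - c \hspace{1pt} t$; positivity of $V$ on $[0, T^*)$ forces $T^* \le C \hspace{1pt} |\partial \Omega_0|^{(1+\alpha)/n}$, which is \eqref{eq:boundforTast}.

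The key nontrivial input is the pointwise nonlocal inequality \eqref{eq:intro:pointwise ineq delomega}; once that is in hand, the remainder is a clean H\"older plus ODE argument. A secondary technical point is the rigorous justification of the smoothness of $V$ and of the first variation identity under the $C^{2}$ hypothesis, which is standard given that the flow stays $C^{2}$ on $[0, T^*)$ and the total area is finite (and bounded away from $\infty$ along the shrinking convex evolution).
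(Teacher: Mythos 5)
Your overall architecture (first variation, a lower bound on $H_\alpha$, Michael--Simon or Gauss--Bonnet, then an ODE for $V(t)=|\delomega_t|$ giving $\frac{d}{dt}V^{(1+\alpha)/n}\le -c$) is the right one, but the key middle step is wrong. You assert that the pointwise inequality~\eqref{eq:intro:pointwise ineq delomega} ``provides a bound of the form $H(x)\le C\,H_\alpha(x)^{1/\alpha}$, or equivalently $H_\alpha(x)\ge c\,H(x)^{\alpha}$.'' It does not: inequality~\eqref{eq:intro:pointwise ineq delomega} (equivalently~\eqref{eq:old pointwise aleks Fench}) reads $|\delomega|^{-\alpha/n}\le C\,H_\alpha(x)$, i.e., it bounds $H_\alpha$ from below by a power of the \emph{total} surface area, with no reference to the classical mean curvature at the point. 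Moreover, the pointwise comparison $H_\alpha(x)\ge c\,H(x)^{\alpha}$ with $c=c(n,\alpha)$ is in fact false for general convex sets: take a planar convex body of diameter of order $1$ whose boundary near a point $x$ consists of two almost collinear segments (exterior angle $\theta\ll1$) joined by a circular arc of small radius $\delta$. At the midpoint of the arc one has $H(x)=1/\delta$, while a scale-by-scale computation gives $H_\alpha(x)\approx C\bigl(\theta^{1-\alpha}\delta^{-\alpha}+1\bigr)$, so $H_\alpha(x)/H(x)^{\alpha}\to0$ as $\theta\to0$ and $\delta\to0$. Hence the step $V'(t)\le -c\int_{\delomega_t}H^{1+\alpha}\,dx$ is unjustified, and the H\"older argument built on it (which mimics the classical $\alpha=1$ Evans--Spruck scheme) does not go through as written.

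The repair is to use~\eqref{eq:intro:pointwise ineq delomega} in its actual form, which in fact simplifies your argument and is exactly what the paper does: since $H[\Omega_t]\ge0$ by convexity, multiply $|\delomega_t|^{-\alpha/n}\le C_1 H_\alpha[\Omega_t](x)$ by $H[\Omega_t](x)$ and integrate over $\delomega_t$ to get
\begin{equation*}
|\delomega_t|^{-\frac{\alpha}{n}}\int_{\delomega_t}H[\Omega_t]\,dx\le C_1\int_{\delomega_t}H_\alpha[\Omega_t]\,H[\Omega_t]\,dx=-C_1\,\frac{d}{dt}|\delomega_t|,
\end{equation*}
and then apply Theorem~\ref{thm:Michael-Simon Allard} with $u\equiv1$, $p=1$ (or Gauss--Bonnet when $n=1$) to bound $\int_{\delomega_t}H[\Omega_t]\,dx\ge c\,|\delomega_t|^{(n-1)/n}$. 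This yields $V'(t)\le -c\,V(t)^{(n-1-\alpha)/n}$ directly, with no H\"older step and no pointwise comparison between $H$ and $H_\alpha$; your concluding ODE integration is then correct and identical to the paper's.
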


The corresponding estimate for the classical mean curvature flow (where~$\alpha = 1$) was established by Evans and Spruck~\cite{Evans-Spruck}---see Evans~\cite[Section F.2]{Evans CIME} for a simpler proof in the case of a smooth flow. Both arguments make crucial use of the Michael-Simon inequality.

We stress that Theorem~\ref{thm:extinction time} assumes $\partial\Omega_t$ to be a~$C^2$ hypersurface for all~$t\in[0,T^\ast)$. Hence, our result must be understood as an estimate for the maximal time of existence of the $C^{2}$ flow, and not as a bound on the true extinction time~$T_e$. As commented in footnote~\ref{TastTefoot}, it is still not known whether for a convex~$C^2$ initial surface~$\partial\Omega_0$ the flow remains~$C^2$ for all times prior to extinction and there is no formation of singularities, such as, for instance, corners or edges of a polytope.

Note that Theorem~\ref{thm:extinction time} improves estimate~\eqref{SaezVal_bound} from~\cite{Saez Valdinoci} (when restricted to convex evolutions) in the dependence on~$\Omega_0$. Indeed, any bounded convex set~$\Omega_0$ satisfies the nontrivial inequality~$|\partial \Omega_0| \le C(n) \diam(\Omega_0)^n$---see Proposition~\ref{RSineprop} below for its sharp version, in which~$C(n)=2^{-n}|\partial B_1|=2^{-n}|\mathbb{S}^n|$. On the other hand, for~$n \ge 2$ one can produce examples of convex sets with diameter equal to~$1$ and arbitrarily small surface area---e.g., shrinking tubular neighborhoods of a segment.

\subsection{Sketch of the proof of Theorem~\ref{thm:intro:Convex MS for functions}}
\label{sketchsubsec}

Our analysis stems from the following observation. If~$\Omega\subset\R^{n+1}$ is an open convex set, then
\begin{equation}
 \label{eq:intro:pointwise ineq delomega}
 |\delomega|^{- \frac{\alpha}{n}} \leq CH_{\alpha}(x)\quad \text{for a.e.~$x\in\delomega$},
\end{equation}
for some universal constant~$C$ depending only on~$n$ and~$\alpha.$ This pointwise inequality for convex sets cannot hold for general domains---after replacing $H_\alpha$ by~$|H_\alpha|$. Indeed, one can easily construct a smooth bounded domain $\Omega$ with $0\in\partial\Omega$ and $H_{\alpha}(0)=0$; we will then have $|H_{\alpha}|\leq \varepsilon$ in a set of positive measure (a small neighborhood of $0$ on $\partial\Omega$), for every $\varepsilon>0$. It also has no counterpart in the local setting, since~$\partial \Omega$ may have flat parts where the standard mean curvature vanishes. The proof of~\eqref{eq:intro:pointwise ineq delomega} will be rather simple but, in any case, at the end of this section we will discuss how we originally found it in the plane, that is, when~$\Omega\subset\R^2$.

The next step in proving the main theorem is to consider subsets~$E\subset\partial\Omega$ and a dichotomy argument. 
We will distinguish, vaguely speaking, between two situations: either~$\partial \Omega$ has, at some well-chosen scales depending on~$|E|$, small density around~$x$, or it does not.
In the former case of points~$x$ of low density---occurring, say, where~$\partial \Omega$ has a tentacle-like shape---the proof of~\eqref{eq:intro:pointwise ineq delomega} still carries through and one obtains that
$$
  |E|^{-\frac{\alpha}{n}}\leq C H_{\alpha}(x)
$$
at such points~$x$. In the latter case when~$x$ has high density, 
we take advantage of the other term in the right-hand side of our fractional Michael-Simon inequality (in this exposition we take~$s=\alpha$ to simplify) and prove that
$$
  |E|^{-\frac{\alpha}{n}}\leq C\int_{\delomega\setminus E}\frac{dy}{|x-y|^{n+\alpha}}.
$$
This second case is what happens for nonlocal minimal surfaces
(Theorem~\ref{thm:intro:Cabre Cozzi fract Sobolev}), where every point has high density.
Either way, the following pointwise inequality will hold true: 
\begin{equation}\label{eq:intro:main pointwise ineq}
  |E|^{- \frac{\alpha}{n}} \le C \left( \int_{\delomega \setminus E} \frac{dy}{|x - y|^{n + \alpha }}+ H_\alpha(x) \right)\quad\text{for every  $E\subset\delomega$ and a.e.~$x\in E$}.
\end{equation}

Inequality~\eqref{eq:intro:main pointwise ineq}---see Proposition~\ref{pointwiseconvexMSprop}---is the key step towards the main theorem. It turns out to be the non-flat version of the pointwise estimate established in~$\R^n$ by Savin and Valdinoci~\cite[Appendix~A]{SV14}. Their estimate states that
\begin{equation}
 \label{eq:intro:pointwise by Savin Valdinoci}
  |E|^{-\frac{\alpha}{n}}\leq C \int_{\R^n \setminus E}\frac{dy}{|x-y|^{n+\alpha}} \quad\text{for all~$x \in \R^n$ and~$E\subset\R^n$},
\end{equation}
for some constant~$C$ depending only on~$n$ and~$\alpha$. This is a rearrangement inequality that follows immediately from the observation that integrating over the complement of the ball~$B_{\rho}(x)$, with~$|B_{\rho}|=|E|$, instead of~$\R^n \setminus E$ does not increase the right-hand side of~\eqref{eq:intro:pointwise by Savin Valdinoci}. 

Integrating~\eqref{eq:intro:main pointwise ineq} over~$E$, we are led to the fractional isoperimetric inequality
$$
  |E|^{\frac{n-\alpha}{n}}\leq C \left(\Per_{\delomega,\alpha}(E)+\int_{E}H_{\alpha}(x) \, dx\right).
$$
This is our fractional Sobolev inequality~\eqref{eq:intro:sAMSonconvex} for~$s = \alpha$,~$p=1$, and characteristic functions---i.e., inequality~\eqref{old theorem frac MS for sets}. To extend it to any~$p\geq 1$ and arbitrary functions we follow the strategy devised by Di Nezza, Palatucci, and Valdinoci in~\cite[Section~6]{HitchhikersG} to deduce the Euclidean fractional Sobolev inequality~\eqref{eq:intro:frac Sobolev euclidean}
from the pointwise inequality~\eqref{eq:intro:pointwise by Savin Valdinoci}.
As we will see later, when~$p = 1$ a fractional Sobolev inequality can be established more easily using the corresponding fractional isoperimetric inequality in combination with the fractional coarea formula of Visintin~\cite{Visintin}---see Lemma~\ref{lem:frac coarea on manifold} below. This is true both in the Euclidean framework and in the context of hypersurfaces. However, to the best of our knowledge, it is not known whether one can then derive the fractional Sobolev inequality for~$p > 1$ from the case~$p=1$ (even in the Euclidean case), in contrast with the case of Sobolev inequalities of integer order.

Finally, the following is an elementary proof of the pointwise lower bound~\eqref{eq:intro:pointwise ineq delomega} on the nonlocal mean curvature
for bounded and strictly convex sets of $\R^2.$ This was the starting point of our work. Up to a rigid movement, we may assume that~$x=0\in\partial\Omega$ and that~$\Omega \subset \R^2_+ = \{ y \in \R^2 : y_2 > 0 \}$. As~$\Omega$ is strictly convex, it can be parametrized by a function~$y:[0,\pi]\to \partial\Omega \subset \R^2$ of the form~$y(\theta)=r(\theta)(\cos\theta,\sin\theta)$, with~$r > 0$ in~$(0, \pi)$ and~$r(0)=r(\pi)=0$. In this parametrization, for~$y=y(\theta)$ and~$r=r(\theta)$, it holds
\begin{equation}
 \label{eq:intro:kernel of double layer pot}
  y\cdot\nu(y)=\frac{r^2}{\sqrt{r^2+\dot{r}^2}}
  \quad\text{ and }\quad 
  \frac{y\cdot\nu(y)}{|y|^{2+\alpha}}=\frac{1}{r^{\alpha}\sqrt{r^2+\dot{r}^2}}.
\end{equation}
From the fact that~$r/\sqrt{r^2+\dot{r}^2}\leq 1$ one obtains
\begin{equation} \label{eq:intro:idea Gauss Bonnet}
\begin{aligned}
  \pi & = \int_0^{\pi}d\theta
  \leq
  \int_0^{\pi}\left(\frac{r}{\sqrt{r^2+\dot{r}^2}}\right)^{-\frac{\alpha}{1+\alpha}}  \, d\theta
  = \int_0^{\pi} \! r^{-\frac{\alpha}{1+\alpha}} \!
  \left(\sqrt{r^2+\dot{r}^2}\right)^{\frac{\alpha}{1+\alpha}} \, d\theta 
  \\
  & \leq
  \left(\int_0^{\pi}\frac{d\theta}{r^{\alpha}}\right)^{\!\! \frac{1}{1+\alpha}} \! \left(\int_0^{\pi} \! \sqrt{r^2+\dot{r}^2} \, d\theta\right)^{\!\! \frac{\alpha}{1+\alpha}}.
\end{aligned}
\end{equation}
From the representation~\eqref{eq:intro:def of H alpha} for the fractional~$\alpha$-mean curvature and the second identity in~\eqref{eq:intro:kernel of double layer pot} it follows that $\int_0^\pi r^{-\alpha}d\theta=H_{\alpha}(0).$
Hence we proved that~$\pi^{1+\alpha}~\leq~H_{\alpha}(0)|\partial\Omega|^{\alpha}$, which is precisely~\eqref{eq:intro:pointwise ineq delomega} for~$n=1$.

\subsection{Plan of the Paper}

We shall prove Theorem~\ref{thm:intro:Convex MS for functions} in increasing order of generality, using in each section the previous less general results or the main ingredients of their proofs.

In Section~\ref{section:frac Aleks Fench} we prove the pointwise lower bound~\eqref{eq:intro:pointwise ineq delomega} for~$H_{\alpha},$ as well as its integral consequence~\eqref{old thm frac Aleks-Fench}. This is the last statement of Theorem~\ref{thm:intro:Convex MS for functions}.
	
In Section~\ref{section:subsets of convex set} we extend the pointwise inequality~\eqref{eq:intro:pointwise ineq delomega} to proper subsets~$E$ of~$\delomega$---i.e., we prove the pointwise lower bound~\eqref{eq:intro:main pointwise ineq}.

In Section~\ref{section:fromsetstofunctions} we deduce Theorem~\ref{thm:intro:Convex MS for functions} in its full generality from the pointwise lower bound~\eqref{eq:intro:main pointwise ineq}.

In Section~\ref{section:fractional mean curv flow} we apply the pointwise inequality~\eqref{eq:intro:pointwise ineq delomega} to the fractional mean curvature flow and establish Theorem~\ref{thm:extinction time}.

In Appendix~\ref{appendix:rosenthal-szasz} we provide a simple proof of a known isodiametric inequality for the perimeter of convex sets.

\subsection{Notation}
Throughout the paper, the word \emph{measurable} refers to the~$n$-dimensional Hausdorff measure~$\mathcal{H}^n$ on a hypersurface~$M$ of~$\R^{n+1}$, if not stated explicitly otherwise. The measure of a set~$E \subset M$ will be denoted by~$|E|$ and the integration element simply by~$dx$, instead of $d\Haus^n(x).$ Open balls are understood as balls in the ambient space~$\R^{n+1},$ i.e.,~$B_R(x)=\{y\in \R^{n+1}:\,|y-x|<R\}$ and~$|y-x|$ is the Euclidean distance in~$\R^{n+1}.$ If~$x=0$ we write~$B_R=B_R(0),$ while~$\mathbb{S}^n = \partial B_1$ is the~$n$-dimensional unit sphere in~$\R^{n+1}.$

\section{A lower bound on the nonlocal mean curvature}
\label{section:frac Aleks Fench}

\noindent
We start by proving the last bound~\eqref{old thm frac Aleks-Fench} of Theorem~\ref{thm:intro:Convex MS for functions}, which is the simplest statement within the theorem. It will follow from the pointwise inequality
\begin{equation}\label{eq:old pointwise aleks Fench}
 |\partial\Omega|^{-\frac{\alpha}{n}}\leq C\, H_{\alpha}(x) \quad\text{for a.e.~}x\in
  \partial\Omega,\quad\text{where }C=\left(\frac{2}{|\mathbb{S}^n|}\right)^{\frac{n+\alpha}{n}}.
\end{equation}
Here,~$\alpha \in (0,1)$ and~$\Omega$ is any bounded open convex set of~$\R^{n+1}$.

The proof of~\eqref{eq:old pointwise aleks Fench} relies on the following two simple lemmas. The first one extends the first identity in~\eqref{eq:intro:idea Gauss Bonnet} to higher dimensions. It is a well-known result in the theory of double layer potentials (it is sometimes called Gauss law) and does not require convexity; see, e.g.,~\cite[Proposition 3.19]{Folland PDE}. Later we will use the lemma with~$\Omega_b=\Omega\cap B_R(x)$ for some radius~$R$, where~$\Omega$ is our convex set and~$x\in\partial\Omega$.

\begin{lemma} \label{fundsollem}
Let~$\Omega_b \subset \R^{n + 1}$ be a bounded domain with Lipschitz boundary. Then,
$$
  \PV\int_{\delomega_b} \frac{(y - x) \cdot \nu(y)}{|y - x|^{n + 1}} \, {dy} = \frac{|\mathbb{S}^n|}{2}
$$
holds true in the principal value sense at every point~$x \in \delomega_b$ at which~$\delomega_b$ is differentiable.
\end{lemma}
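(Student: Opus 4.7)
The plan is to recognize the integrand $(y-x)\cdot\nu(y)/|y-x|^{n+1}$ as (up to a multiplicative constant) the normal derivative of the fundamental solution of the Laplacian on $\R^{n+1}$, and hence to exploit a divergence-theorem argument combined with excision of a small ball centered at the singularity. Concretely, first I would verify by direct computation that the vector field
$$
V(y) := \frac{y-x}{|y-x|^{n+1}}
$$
is divergence-free on $\R^{n+1}\setminus\{x\}$ (equivalently, $V$ is a multiple of $\nabla_y |y-x|^{1-n}$ for $n\ge 2$, and $\nabla_y \log|y-x|$ for $n=1$, both of which are harmonic off $x$). This is the key structural fact that will drive the computation.

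Next, I would fix $\varepsilon>0$ small and apply the divergence theorem to $V$ on the open set $\Omega_b\setminus\overline{B_\varepsilon(x)}$, which is still Lipschitz provided $\varepsilon$ is small. The boundary splits into two pieces. On the outer part $\partial\Omega_b\setminus\overline{B_\varepsilon(x)}$ the exterior unit normal coincides with $\nu(y)$, yielding precisely
$$
\int_{\partial\Omega_b\setminus B_\varepsilon(x)} \frac{(y-x)\cdot\nu(y)}{|y-x|^{n+1}}\, dy.
$$
On the inner part $\partial B_\varepsilon(x)\cap\Omega_b$ the exterior unit normal (pointing out of $\Omega_b\setminus\overline{B_\varepsilon(x)}$) is $-(y-x)/|y-x|$, so the boundary integrand becomes $-\varepsilon^{-n}$, and the corresponding contribution equals $-\varepsilon^{-n}\Haus^n(\partial B_\varepsilon(x)\cap\Omega_b)$. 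Since $\dive V\equiv 0$ on the domain of integration, the two contributions must sum to zero, giving
$$
\int_{\partial\Omega_b\setminus B_\varepsilon(x)}\frac{(y-x)\cdot\nu(y)}{|y-x|^{n+1}}\, dy=\varepsilon^{-n}\Haus^n\bigl(\partial B_\varepsilon(x)\cap\Omega_b\bigr).
$$

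The final step is to take $\varepsilon\to 0^+$. The left-hand side converges, by definition, to the principal value integral. For the right-hand side, the point $x\in\partial\Omega_b$ is assumed to be a point of differentiability, so $\Omega_b$ admits a tangent half-space at $x$; Lipschitz regularity provides the uniform control needed to deduce that, after rescaling $y\mapsto x+\varepsilon z$, the rescaled domain $\varepsilon^{-1}(\Omega_b-x)$ converges (in the sense of characteristic functions, locally on $\mathbb{S}^n$) to a half-space through the origin. Therefore $\varepsilon^{-n}\Haus^n(\partial B_\varepsilon(x)\cap\Omega_b)\to \Haus^n(\mathbb{S}^n\cap H) = |\mathbb{S}^n|/2$, where $H$ is that half-space, and the conclusion follows.

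The main obstacle I anticipate is the step involving the limit of $\varepsilon^{-n}\Haus^n(\partial B_\varepsilon(x)\cap\Omega_b)$: one must justify the rescaling argument using only Lipschitz regularity plus differentiability at the single point $x$. In a $C^1$ setting this is immediate, but here one should either invoke the standard blow-up property at a differentiability point of a Lipschitz graph, or argue in local graph coordinates where $\partial\Omega_b$ is a graph of a Lipschitz function differentiable at $x$, noting that the half-ball volume estimate holds up to an error of order $o(1)$ as $\varepsilon\to 0$. Everything else is routine divergence-theorem bookkeeping.
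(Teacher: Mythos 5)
Your proposal is correct and follows essentially the same route as the paper: the authors also identify the integrand as $-|\mathbb{S}^n|\,\nabla\Phi(y)\cdot\nu(y)$ for the fundamental solution $\Phi$ centered at $x$, apply the divergence theorem on $\Omega_b\setminus\overline{B}_\varepsilon(x)$, and let $\varepsilon\to 0^+$, using differentiability of $\partial\Omega_b$ at $x$ to get $\varepsilon^{-n}|\Omega_b\cap\partial B_\varepsilon(x)|\to|\mathbb{S}^n|/2$. Your extra remarks on justifying the blow-up limit at a differentiability point of a Lipschitz boundary are a reasonable elaboration of a step the paper states without detail.
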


\begin{proof}
Let~$\Phi$ be the fundamental solution of the Laplacian centered at~$x$, i.e.,~$- \Delta \Phi = \delta_x$ in $\R^{n+1}$. We have that $\nabla \Phi(y) = - |\mathbb{S}^n|^{-1} |y - x|^{- n - 1} (y - x)$ for every~$y \ne x$. Let~$\varepsilon > 0$ be sufficiently small. By applying the divergence theorem in~$\Omega_b \setminus \overline{B}_\varepsilon(x)$, a Lipschitz domain, we get that
\begin{align*}
  0 & = \int_{\Omega_b \setminus \overline{B}_\varepsilon(x)} \Delta \Phi(y) \, dy = \int_{\partial \left( \Omega_b \setminus \overline{B}_\varepsilon(x) \right)} \nabla \Phi(y) \cdot \nu(y) \, {dy}
  \\
  & = \frac{1}{|\mathbb{S}^n|} \left\{ - \int_{{\delomega_b} \setminus \overline{B}_\varepsilon(x)} \frac{(y - x) \cdot \nu(y)}{|y - x|^{n + 1}} \, {dy} + \frac{|\Omega_b \cap \partial B_\varepsilon(x)|}{\varepsilon^n} \right\}.
\end{align*}
The claim follows by letting~$\varepsilon \rightarrow 0^+$ and noticing that~$\varepsilon^{- n} |\Omega_b \cap \partial B_\varepsilon(x)| \rightarrow |\mathbb{S}^n| / 2$, since $\partial\Omega$ is differentiable at $x.$ This shows in particular that the principal value in the statement exists.
\end{proof}

The second lemma is an extension of the inequalities in~\eqref{eq:intro:idea Gauss Bonnet} to any dimension~$n \ge 1$. For the proof of~\eqref{eq:old pointwise aleks Fench} we will only need the next lemma for~$E=\delomega,$ but in Section~\ref{section:subsets of convex set} we will require the estimate for general subsets~$E\subset\delomega$. Here it is useful to recall the comments made before Theorem~\ref{thm:intro:Convex MS for functions} on the differentiability properties of open convex sets~$\Omega$ and the definition~\eqref{eq:intro:def of H alpha} of~$H_\alpha(x)$ for~$x\in\partial\Omega$.

\begin{lemma} \label{mainHalphaestlem}
Let~$\alpha\in (0,1)$ and $\Omega\subset\R^{n+1}$ be an open convex set. Then,
$$
  \int_E \frac{(y - x) \cdot \nu(y)}{|y - x|^{n + 1}} \, {dy} \le |E|^{\frac{\alpha}{n + \alpha}} H_{\alpha}(x)^{\frac{n}{n + \alpha}}
$$
for every measurable subset~$E\subset\delomega$ and almost every point~$x\in \delomega$. Here, the integral is well-defined in~$[0,+\infty]$ since its integrand is a non-negative function.

\end{lemma}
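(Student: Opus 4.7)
The statement is the higher-dimensional analogue of the chain of inequalities~\eqref{eq:intro:idea Gauss Bonnet} from the~$n=1$ sketch, and the plan is to reproduce that argument verbatim in terms of the intrinsic geometric quantity~$g(y) := (y-x) \cdot \nu(y) / |y - x|$. Since~$\Omega$ is convex, at almost every~$y \in \partial \Omega$ the half-space~$\{ z : (z - y) \cdot \nu(y) \le 0 \}$ contains~$\Omega$, so in particular~$(x - y) \cdot \nu(y) \le 0$, giving~$(y - x) \cdot \nu(y) \ge 0$ for a.e.~$y \in \partial \Omega$. This justifies the well-definedness (in~$[0, +\infty]$) of the integral in the statement and also shows that~$g(y) \in [0, 1]$ for a.e.~$y \in \partial \Omega$, since~$|\nu(y)| = 1$.

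\textbf{Key step: factorize the integrand.} The algebraic identity
$$
\frac{(y - x) \cdot \nu(y)}{|y - x|^{n + 1}} = \left( \frac{(y - x) \cdot \nu(y)}{|y - x|^{n + 1 + \alpha}} \right)^{\! \frac{n}{n + \alpha}} \left( \frac{(y - x) \cdot \nu(y)}{|y - x|} \right)^{\! \frac{\alpha}{n + \alpha}}
$$
is a straightforward check: the exponents of~$(y - x) \cdot \nu(y)$ sum to~$1$, and the exponents of~$|y - x|$ sum to~$-(n+1+\alpha) \cdot n/(n+\alpha) - \alpha/(n+\alpha) = -(n + 1)$. The second factor equals~$g(y)^{\alpha/(n+\alpha)} \le 1$, so it can be discarded. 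This is the exact analogue of the inequality~$r/\sqrt{r^2 + \dot r^2} \le 1$ exploited in~\eqref{eq:intro:idea Gauss Bonnet}.

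\textbf{Conclusion via H\"older.} Applying H\"older's inequality on~$E$ with conjugate exponents~$(n + \alpha)/n$ and~$(n + \alpha)/\alpha$ gives
$$
\int_E \left( \frac{(y - x) \cdot \nu(y)}{|y - x|^{n + 1 + \alpha}} \right)^{\! \frac{n}{n + \alpha}} dy \le \left( \int_E \frac{(y - x) \cdot \nu(y)}{|y - x|^{n + 1 + \alpha}} \, dy \right)^{\! \frac{n}{n + \alpha}} |E|^{\frac{\alpha}{n + \alpha}},
$$
and the first integral on the right-hand side is bounded above by~$H_\alpha(x)$, because extending the integration from~$E$ to all of~$\partial \Omega$ only enlarges it (the integrand being non-negative thanks to convexity) and, at points of twofold differentiability of~$\partial\Omega$, the resulting integral is precisely~$H_\alpha(x)$ in the sense of~\eqref{eq:intro:def of H alpha}. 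Combining the three displays yields the claim at every such~$x \in \partial \Omega$, which by Aleksandrov's theorem is a full-measure subset of~$\partial\Omega$.

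\textbf{Main obstacle.} There is no serious obstacle once the correct factorization is identified; the entire argument is essentially a higher-dimensional transcription of the two-dimensional computation in Subsection~\ref{sketchsubsec}. The mildly subtle point is the systematic use of convexity to guarantee~$(y-x) \cdot \nu(y) \ge 0$ and~$g(y) \le 1$, which allows the factorization step and the extension of the integral from~$E$ to~$\partial \Omega$ to be carried out simultaneously; without convexity, both monotonicity arguments would fail.
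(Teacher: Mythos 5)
Your proposal is correct and follows essentially the same argument as the paper: the paper bounds the integrand by writing $(y-x)\cdot\nu(y)/|y-x|\le 1$ and raising it to the power $n/(n+\alpha)$ before applying H\"older with exponents $(n+\alpha)/n$ and $(n+\alpha)/\alpha$, which is exactly your multiplicative factorization with the factor $g(y)^{\alpha/(n+\alpha)}\le 1$ discarded. The final step---extending the integral from $E$ to $\partial\Omega$ by nonnegativity and identifying it with $H_\alpha(x)$ via~\eqref{eq:intro:def of H alpha}---also coincides with the paper's conclusion.
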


\begin{proof}
Using that~$0 \le (y - x) \cdot \nu(y) / |y - x| \le 1$ for almost every~$x$ and~$y$ on~$\partial\Omega$, together with H\"older's inequality, we see that
\begin{align*}
 \int_E \frac{(y - x) \cdot \nu(y)}{|y - x|^{n + 1}} \, {dy} 
 & =
 \int_E \frac{(y - x) \cdot \nu(y)}{|y - x|} \frac{{dy}}{|y - x|^n} 
 \\
 & \le
  \int_E \left( \frac{(y - x) \cdot \nu(y)}{|y - x|} \right)^{\frac{n}{n + \alpha}} \frac{{dy}}{|y - x|^n}  
  = 
  \int_E \left( \frac{(y - x) \cdot \nu(y)}{|y - x|^{n + 1 + \alpha}} \right)^{\frac{n}{n + \alpha}} {dy} 
  \\
 & \le
  |E|^{\frac{\alpha}{n + \alpha}} \left( \int_E \frac{(y - x) \cdot \nu(y)}{|y - x|^{n + 1 + \alpha}} \, {dy} \right)^{\frac{n}{n + \alpha}}.
\end{align*}
The claim follows from this inequality and expression~\eqref{eq:intro:def of H alpha} for~$H_\alpha(x)$, combined with the fact that~$(y - x) \cdot \nu(y) \ge 0$ for almost every~$x$ and~$y$ on~$\delomega$ since~$\Omega$ is convex.
\end{proof}

\begin{proof}[Proof of inequalities \eqref{eq:old pointwise aleks Fench} and \eqref{old thm frac Aleks-Fench}.]
Since Lemma~\ref{fundsollem} requires the domain to be boun\-ded, while \eqref{old thm frac Aleks-Fench} is claimed for convex sets with finite perimeter, we first point out that an open convex set is bounded if and only if it has finite perimeter\footnote{This follows from the monotonicity of the perimeter of open convex sets with respect to inclusion (see Appendix~\ref{appendix:rosenthal-szasz} for the proof of this classical result). Using this fact, one part of the claim is obvious, while the other is checked as follows. Any unbounded open convex set contains a ball, and hence also the convex cones generated by a vertex going to infinity and the ball. Note finally that such convex cones have arbitrarily large perimeter.}.

Now, from Lemmas~\ref{fundsollem} and~\ref{mainHalphaestlem} applied with~$\Omega_b = \Omega$ and~$E=\delomega$, respectively, we obtain~\eqref{eq:old pointwise aleks Fench}. By raising~\eqref{eq:old pointwise aleks Fench} to the power~$s/\alpha$ and integrating over~$\delomega,$ we infer the validity of~\eqref{old thm frac Aleks-Fench}.
\end{proof}

\section{A fractional Michael-Simon type isoperimetric inequality}
\label{section:subsets of convex set}

\noindent
In this section we shall prove the key pointwise inequality involved in the proof of Theorem~\ref{thm:intro:Convex MS for functions}. This is the content of the following proposition.

\begin{proposition} \label{pointwiseconvexMSprop}
Let~$\alpha\in(0,1),$ $s\in(0,1),$ $p > 0$, and~$\Omega \subset \R^{n + 1}$ be an open convex set. Then, for every~$E \subset \delomega$ with finite positive measure and a.e.~$x \in E$, it holds
\begin{equation} 
 \label{pointwiseconvexMSine}
  |E|^{- \frac{s p}{n}} \le C \left( \int_{\delomega \setminus E} \frac{{dy}}{|x - y|^{n + s p}} + H_{\alpha}(x)^{\frac{s p}{\alpha}}\right)
\end{equation}
for some constant~$C$ depending only on~$n$,~$\alpha$,~$s$,~and~$p$.
\end{proposition}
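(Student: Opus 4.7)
The plan is to combine the Gauss-law identity of Lemma~\ref{fundsollem}, applied to the truncated convex domain $\Omega \cap B_R(x)$, with the curvature estimate of Lemma~\ref{mainHalphaestlem}, and to close the argument by a dichotomy at a scale $R_0$ comparable to $|E|^{1/n}$. First I would note that the outer unit normal to $\Omega \cap B_R(x)$ coincides with $\nu$ on $\partial\Omega \cap B_R(x)$ and with $(y-x)/R$ on $\Omega \cap \partial B_R(x)$, so Lemma~\ref{fundsollem} produces
\[
\frac{|\mathbb{S}^n|}{2} = \int_{\partial\Omega \cap B_R(x)} \frac{(y-x)\cdot\nu(y)}{|y-x|^{n+1}} \, dy + \frac{|\Omega \cap \partial B_R(x)|}{R^n}.
\]
I would split the first integral over $E \cap B_R(x)$ and $(\delomega \setminus E) \cap B_R(x)$, bounding the former via Lemma~\ref{mainHalphaestlem} (using $|E \cap B_R(x)| \le |E|$), and the latter via $(y-x)\cdot\nu(y) \le |y-x|$ together with $|y-x|^{sp} \le R^{sp}$ on $B_R(x)$. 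This yields the master inequality
\[
\frac{|\mathbb{S}^n|}{2} - \frac{|\Omega \cap \partial B_R(x)|}{R^n} \le |E|^{\frac{\alpha}{n+\alpha}} H_\alpha(x)^{\frac{n}{n+\alpha}} + R^{sp} \int_{\delomega \setminus E} \frac{dy}{|x-y|^{n+sp}} \qquad (\ast)
\]
valid for every $R>0$, at every $x \in E$ where $\partial\Omega$ is differentiable (a.e.\ $x$ by Rademacher).

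A second key ingredient is a monotonicity property: for convex $\Omega$ and $x \in \partial\Omega$, the ratio $|\Omega \cap \partial B_R(x)|/R^n$ equals the spherical measure of $\{\omega \in \mathbb{S}^n : x + R\omega \in \Omega\}$, and this set is non-increasing in $R$ because $[x, y) \subset \Omega$ whenever $y \in \Omega$ by convexity. Moreover, since $\Omega$ lies in a supporting half-space through $x$, the ratio never exceeds $|\mathbb{S}^n|/2$. With these tools in hand, I would set $R_0 := C_0 |E|^{1/n}$ for a dimensional constant $C_0$ to be determined and distinguish two cases.

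In Case~A, $|\Omega \cap \partial B_{R_0}(x)|/R_0^n \le |\mathbb{S}^n|/4$, so the left-hand side of $(\ast)$ at $R = R_0$ is at least $|\mathbb{S}^n|/4$. Then either the curvature or the integral term dominates: in the curvature case, raising $|E|^{\alpha/(n+\alpha)} H_\alpha(x)^{n/(n+\alpha)} \ge c$ to the power $(n+\alpha)/n$ gives $H_\alpha(x)^{sp/\alpha} \ge c|E|^{-sp/n}$; in the integral case, substituting $R_0^{sp} = C_0^{sp} |E|^{sp/n}$ gives $\int_{\delomega \setminus E} |x-y|^{-n-sp}\,dy \ge c|E|^{-sp/n}$. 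In Case~B, $|\Omega \cap \partial B_{R_0}(x)|/R_0^n > |\mathbb{S}^n|/4$, so the cone $\{x + t\omega : \omega \in S_{R_0},\, 0 < t < R_0\}$ is contained in $\Omega$ and has volume at least $c R_0^{n+1}$. Combining this with $|B_{R_0}(x) \setminus \Omega| \ge |B_{R_0}|/2$ (from the supporting half-space), the relative isoperimetric inequality in the ball $B_{R_0}(x)$ applied to $\Omega \cap B_{R_0}(x)$ produces the density estimate $|\delomega \cap B_{R_0}(x)| \ge c_1 R_0^n$. Fixing $C_0$ large enough that $c_1 C_0^n \ge 2$ gives $|(\delomega \setminus E) \cap B_{R_0}(x)| \ge |E|$, and consequently
\[
\int_{\delomega \setminus E} \frac{dy}{|x-y|^{n+sp}} \ge \frac{|E|}{R_0^{n+sp}} = \frac{1}{C_0^{n+sp}}\, |E|^{-sp/n},
\]
which produces the required bound~\eqref{pointwiseconvexMSine}.

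The main technical obstacle I expect is in Case~B: applying the relative isoperimetric inequality to $\Omega \cap B_{R_0}(x)$ correctly---in particular recognising that its interior perimeter inside $B_{R_0}(x)$ equals $|\delomega \cap B_{R_0}(x)|$, since the spherical portion $\Omega \cap \partial B_{R_0}(x)$ lies on $\partial B_{R_0}(x)$ and therefore does not enter the interior perimeter---and keeping careful track of the dimensional constants so that the dichotomy threshold $|\mathbb{S}^n|/4$ and the choice of $C_0$ close up consistently.
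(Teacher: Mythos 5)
Your argument is correct, but it is organized quite differently from the paper's proof, and it is worth recording the comparison. The paper first reduces to the model case $E=\delomega\cap B_R(x)$ via the rearrangement Lemma~\ref{lemma:E can be delomega cap BR} together with the continuity statement of Lemma~\ref{Axcontlem}, and then runs a three-case dichotomy at the two scales $R$ and $TR$, comparing the surface density $\rho^{-n}|\delomega\cap B_\rho(x)|$ with a dimensional threshold: in the low-density cases the spherical term in Lemma~\ref{lemma:localized double layer pot} is absorbed by means of the reverse perimeter-energy estimate of Lemma~\ref{isoplem}, while in the high-density case the Rosenthal-Sz\'asz bound \eqref{eq:perestcor} forces a definite amount of $\delomega\setminus E$ into the annulus $B_{TR}(x)\setminus B_R(x)$ (here it is essential that $E$ has already been replaced by a ball cap). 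You bypass the rearrangement step entirely: splitting the Gauss-law integral from Lemma~\ref{fundsollem} over $E\cap B_R(x)$ and $(\delomega\setminus E)\cap B_R(x)$, and paying a factor $R^{sp}$ to convert the kernel $|x-y|^{-n}$ into $|x-y|^{-n-sp}$, gives your master inequality $(\ast)$ for arbitrary $E$; you then work at the single scale $R_0\sim|E|^{1/n}$ with a two-case dichotomy on the spherical-cap measure $|\Omega\cap\partial B_{R_0}(x)|/R_0^n$ rather than on the surface density, so Lemma~\ref{isoplem} and \eqref{eq:perestcor} are not needed at all. In the high-density case you replace the paper's annulus argument by a genuine density lower bound $|\delomega\cap B_{R_0}(x)|\geq c_1 R_0^n$, obtained from the cone contained in $\Omega$ (convexity), the supporting half-space at $x$, and the relative isoperimetric inequality --- the same inequality the paper uses, but for the opposite purpose --- after which the conclusion follows as in the nonlocal-minimal-surface setting since $|(\delomega\setminus E)\cap B_{R_0}(x)|\geq |E|$ once $C_0$ is fixed dimensionally; there is no circularity, as Case~A does not involve $C_0$ in its threshold. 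Your route is somewhat more self-contained (no truncation of $E$, no rearrangement, fewer auxiliary lemmas), while the paper's route isolates Lemma~\ref{isoplem} and the isodiametric bound as statements of independent interest. One small point you should still address: to write the boundary of $\Omega\cap B_{R_0}(x)$ as the disjoint union of $\delomega\cap B_{R_0}(x)$ and $\Omega\cap\partial B_{R_0}(x)$ up to a null set you need $|\delomega\cap\partial B_{R_0}(x)|=0$, which Rademacher alone does not give; for the fixed radius $R_0$ this fails only for an at most countable set of centers $x$ (by the argument of Lemma~\ref{Axcontlem}\ref{Discountable}), so it is harmless for the ``a.e.~$x$'' statement, but it should be said.
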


The proof of this result relies on the following ingredients: 
\begin{enumerate}[label=($\roman*$),leftmargin=28pt]
\item \label{(i)} A classical rearrangement result, Lemma~\ref{lemma:E can be delomega cap BR}, which reduces the proof of~\eqref{pointwiseconvexMSine} for a general set~$E$ to the case~$E=\delomega\cap B_R(x).$
 
\item \label{(ii)} The double layer potential identity of Lemma~\ref{fundsollem}, and the localized pointwise inequality that will follow from it, Lemma~\ref{lemma:localized double layer pot}.
 
\item \label{(iii)} A dichotomy argument. We will essentially distinguish between two cases, depending on whether~$R^{-n}|\delomega \cap B_R(x)|$ is smaller or larger than a certain constant for some appropriate radii R which depend on $|E|.$

\item \label{(iv)} A kind of reverse  perimeter-energy estimate for convex sets, Lemma~\ref{isoplem} (\textit{reverse} here is meant in comparison with the natural upper bound on the perimeter that holds for minimizing minimal surfaces).

\item \label{(v)} A known isodiametric inequality for the perimeter of convex sets, Proposition~\ref{RSineprop}.
 
\end{enumerate}

We start with~\ref{(i)}---the rearrangement result---which is the content of the next lemma. From it, it will be enough to prove Proposition~\ref{pointwiseconvexMSprop} for sets of the form~$E=\partial\Omega\cap B_R(x)$. Indeed, the lemma states that replacing~$E$ by~$\partial\Omega\cap B_R(x)$, with~$|E| = |\delomega \cap B_R(x)|$, does not increase the right-hand side of~\eqref{pointwiseconvexMSine}. This elementary observation does not require any convexity assumption on~$\Omega$, nor that the hypersurface~${\delomega}$ is a boundary in the first place. In addition, all what is needed for the exponent $n+sp$ appearing in the statement is to be larger than $n.$ This is why we allow $p>0$ in the lemma---though we will use it always with $p\geq 1.$

\begin{lemma}
\label{lemma:E can be delomega cap BR}
Let~$s \in (0,1)$,~$p\in (0,+\infty)$, and~$M\subset\R^{n+1}$ be a set of locally finite~$n$-dimensional Hausdorff measure. Let~$E\subset M$ be a set with positive measure and~$x\in E$. Assume that
\begin{equation}
 \label{eq:assumption E is delomega cap BR}
  |E|=|{M}\cap B_R(x)|
\end{equation}
for some $R>0.$

Then,
$$
  \int_{{M}\setminus E}\frac{dy}{|x-y|^{n+s p}}
  \geq
  \int_{{M}\setminus B_R(x)}\frac{dy}{|x-y|^{n+s p}}.
$$
\end{lemma}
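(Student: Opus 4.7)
The plan is to prove this rearrangement inequality by a straightforward set-theoretic decomposition combined with the fact that the kernel $y \mapsto |x-y|^{-(n+sp)}$ is radially decreasing. The only property being used is that the weight is larger than $R^{-(n+sp)}$ inside $B_R(x)$ and no larger than $R^{-(n+sp)}$ outside, which is why no convexity or boundary structure on $M$ is needed, and why $p$ is only required to be positive.

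First I would write the symmetric-difference identity
\begin{equation*}
\int_{M\setminus E}\frac{dy}{|x-y|^{n+sp}} - \int_{M\setminus B_R(x)}\frac{dy}{|x-y|^{n+sp}} = \int_{(M\setminus E)\cap B_R(x)}\frac{dy}{|x-y|^{n+sp}} - \int_{E\setminus B_R(x)}\frac{dy}{|x-y|^{n+sp}},
\end{equation*}
obtained by subtracting from both integrals the common piece $M \setminus (E \cup B_R(x))$. Thus the claim reduces to showing that the right-hand side is non-negative.

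Next I would use the mass-balance consequence of hypothesis~\eqref{eq:assumption E is delomega cap BR}. Since $|E| = |M \cap B_R(x)|$, splitting both sides according to whether points lie in $B_R(x)$ or not yields
\begin{equation*}
|E\setminus B_R(x)| = |(M\setminus E)\cap B_R(x)|.
\end{equation*}
Finally, on $(M\setminus E)\cap B_R(x)$ the integrand satisfies $|x-y|^{-(n+sp)} \geq R^{-(n+sp)}$, while on $E\setminus B_R(x)$ it satisfies $|x-y|^{-(n+sp)} \leq R^{-(n+sp)}$. Combining these two bounds with the equal-measure identity above gives
\begin{equation*}
\int_{(M\setminus E)\cap B_R(x)}\frac{dy}{|x-y|^{n+sp}} \;\geq\; R^{-(n+sp)} |(M\setminus E)\cap B_R(x)| \;=\; R^{-(n+sp)} |E\setminus B_R(x)| \;\geq\; \int_{E\setminus B_R(x)}\frac{dy}{|x-y|^{n+sp}},
\end{equation*}
which proves the desired inequality.

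There is no real obstacle here; the only mild care is to check that all the integrals are well-defined (measurability of $(M\setminus E)\cap B_R(x)$ and $E \setminus B_R(x)$ with respect to $\Haus^n$, and finiteness issues), and to note that the inequality is trivial if either $\int_{M\setminus E}|x-y|^{-(n+sp)}dy = +\infty$ or $|E \setminus B_R(x)| = 0$, so the argument handles the degenerate cases automatically.
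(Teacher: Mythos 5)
Your proof is correct and is essentially the paper's own argument: the same mass-balance identity $|(M\setminus E)\cap B_R(x)|=|E\setminus B_R(x)|$ combined with bounding the kernel by $R^{-(n+sp)}$ from below inside $B_R(x)$ and from above outside it. The only cosmetic difference is that you subtract the common piece $M\setminus(E\cup B_R(x))$ first (handling the possibly infinite case separately, as you note), whereas the paper chains the same estimates on the full integrals and thereby avoids any subtraction of integrals altogether.
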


\begin{proof}
By assumption~\eqref{eq:assumption E is delomega cap BR} we have 
$$
|E \cap B_R(x)| + |({M}\setminus E)\cap B_R(x)| = |E \cap B_R(x)| + |E \setminus B_R(x)|,
$$
and thus
$$
  |({M}\setminus E)\cap B_R(x)| = |E \setminus B_R(x)|. 
$$
By using this identity we see that
\begin{align*}
  \int_{{M} \setminus E} \frac{{dy}}{|y - x|^{n + s{p}}} & 
  = \int_{\left( {M} \setminus E \right) \cap B_R(x)} \frac{{dy}}{|y - x|^{n + s{p}}} + \int_{({M} \setminus E) \setminus B_R(x)} \frac{{dy}}{|y - x|^{n + s{p}}} 
  \\
  & \ge R^{- n - s{p}} |({M}\setminus E)\cap B_R(x)| + \int_{({M} \setminus E) \setminus B_R(x)} \frac{{dy}}{|y - x|^{n + s{p}}} 
  \\
  & = R^{- n - s{p}} |E \setminus B_R(x)| + \int_{({M} \setminus E) \setminus B_R(x)} \frac{{dy}}{|y - x|^{n + s{p}}}
  \\
  & \ge \int_{E \setminus B_R(x)} \frac{{dy}}{|y - x|^{n + s{p}}} + \int_{({M} \setminus E) \setminus B_R(x)} \frac{{dy}}{|y - x|^{n + s{p}}} 
  \\
  & = \int_{{M} \setminus B_R(x)} \frac{{dy}}{|y - x|^{n + s{p}}}.
\end{align*}
This proves the lemma.
\end{proof}

The next lemma is of technical nature and we will use it twice. Within the proof of Proposition~\ref{pointwiseconvexMSprop} it will guarantee that, under appropriate assumptions on~$M$ and for almost every~$x\in E\subset M$, hypothesis~\eqref{eq:assumption E is delomega cap BR} is actually satisfied for some radius~$R$ depending on~$x$---a property that may not be satisfied by all~$x\in E$, as we will see.

\begin{lemma} \label{Axcontlem}Let~$M \subset \R^{n + 1}$ be a set of locally finite~$n$-dimensional Hausdorff measure. Then, the following statements hold true.
\begin{enumerate}[label=$(\alph*)$,leftmargin=*]
\item \label{Discountable} The set
$$
D := \Big\{ x \in M : \mbox{there exists } R_x > 0 \mbox{ such that } \left| M \cap \partial B_{R_x}(x) \right| > 0 \Big\}
$$
is at most countable.
\item \label{Axprops} For every~$x \in {M}$, the function
$$
\A_x: (0, +\infty) \to [0, +\infty), \mbox{ defined by } \A_x(R) := |{M} \cap B_R(x)| \mbox{ for } R > 0,
$$
is non-decreasing and continuous from the left. Furthermore, it is continuous if and only if~$x \in M \setminus D$.
\end{enumerate}
\end{lemma}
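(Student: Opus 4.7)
The plan is to establish (b) by reducing monotonicity and one-sided continuity of $\A_x$ to standard measure-theoretic limit theorems, and then to derive (a) from a geometric observation about distinct spheres combined with a pigeonhole-type counting argument.

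For (b), monotonicity of $\A_x$ is immediate from $B_R(x) \subset B_{R'}(x)$ when $R \leq R'$, and left-continuity at $R$ follows since $B_R(x) = \bigcup_j B_{R_j}(x)$ whenever $R_j \uparrow R$, using continuity of $\Haus^n$ along increasing unions. For the characterization of continuity, at any $R>0$ and any sequence $R_j \downarrow R$ one has $\bigcap_j B_{R_j}(x) = \overline{B_R(x)}$. The local finiteness of $\Haus^n$ on $M$ ensures $\A_x(R_1) < +\infty$, so continuity of measure along decreasing intersections gives
$$
\lim_{j\to\infty} \A_x(R_j) = |M \cap \overline{B_R(x)}| = \A_x(R) + |M \cap \partial B_R(x)|.
$$
Thus $\A_x$ is right-continuous at $R$ if and only if $|M \cap \partial B_R(x)|=0$, and the characterization of continuity follows by letting $R$ range over $(0,+\infty)$.

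The key ingredient for (a) is the geometric fact that two \emph{distinct} spheres in $\R^{n+1}$ meet in a set of $\Haus^n$-measure zero. Indeed, if the two centers coincide the spheres are disjoint; if they differ, subtracting the two sphere equations places the intersection inside an affine hyperplane, and hence in at most an $(n-1)$-dimensional sphere, which is $\Haus^n$-null. Since a sphere determines its center uniquely, any two distinct points of $D$ produce distinct spheres no matter what radii one chooses.

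To exploit this, I will select, for each $x \in D$, some radius $R_x > 0$ witnessing the definition of $D$, and then partition $D$ over bounded scales: for $k, m \in \N$ set
$$
D_{k,m} := \Big\{ x \in D : x \in \overline{B_k(0)}, \; R_x \leq k, \; |M \cap \partial B_{R_x}(x)| \geq 1/m \Big\},
$$
so that every point of $D$ lies in some $D_{k,m}$. For $x \in D_{k,m}$ the sphere $\partial B_{R_x}(x)$ is contained in $\overline{B_{2k}(0)}$, a region of finite $\Haus^n$-measure in $M$. The geometric fact makes the sets $\{M \cap \partial B_{R_x}(x)\}_{x \in D_{k,m}}$ pairwise null-intersecting, so for any finite subfamily indexed by $x_1,\dots,x_N$,
$$
\frac{N}{m} \leq \sum_{i=1}^N |M \cap \partial B_{R_{x_i}}(x_i)| = \Big| M \cap \bigcup_{i=1}^N \partial B_{R_{x_i}}(x_i) \Big| \leq |M \cap \overline{B_{2k}(0)}| < +\infty.
$$
Hence each $D_{k,m}$ is finite and $D = \bigcup_{k,m \in \N} D_{k,m}$ is at most countable. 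The step that requires the most care is the sphere-intersection observation and the verification that no radius selection creates a spurious coincidence of spheres; everything else is elementary bookkeeping with the locally finite measure hypothesis.
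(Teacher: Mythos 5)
Your proof is correct and takes essentially the same approach as the paper: part (b) via monotonicity and the standard limit formulas for measures along increasing unions and decreasing intersections of balls (with local finiteness justifying the decreasing case), and part (a) via a countable decomposition of $D$ at bounded scales with a measure threshold, the fact that spheres centered at distinct points meet in an $\mathcal{H}^n$-null set, and a pigeonhole count against a finite-measure portion of $M$. The only cosmetic difference is that you bound the radii $R_x \le k$ so that the spheres lie in $\overline{B_{2k}(0)}$, whereas the paper leaves $R_x$ unrestricted and instead truncates $M$ to $M_j = M \cap B_j$ inside the definition of $D_{j,k}$; both give the same counting bound.
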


\begin{proof}
Consider, for~$j, k \in \N$, the sets~$M_j := M \cap B_j$ and
$$
D_{j, k} := \left\{ x \in M_j : \mbox{there exists } R_x > 0 \mbox{ such that } |M_j \cap \partial B_{R_x}(x)| \ge \frac{1}{k} \right\}.
$$
It is clear that~$D = \cup_{j, k \in \N} D_{j, k}$.
Note that, if~$x$ and~$y$ are two distinct points in~$D_{j, k}$, then~$\left| \big( M_j \cap \partial B_{R_x}(x) \big) \cup \big( M_j \cap \partial B_{R_y}(y) \big) \right| = 0$. Moreover, as~$M$ has locally finite~$n$-dimensional measure, we have that~$|M_j| < +\infty$. From the last two facts we deduce that each~$D_{j, k}$ contains no more than~$k |M_j|$ points. Hence,~$D$ is at most countable and~\ref{Discountable} is proved.

We now address point~\ref{Axprops}. The monotonicity of the function~$\A_x$ is obvious, while its left-continuity follows from~$B_R(x)$ being open. The last statement is a consequence of the fact that~$|{M} \cap \partial B_R(x)|= \lim_{\rho \rightarrow R^+} \A_x(\rho) - \A_x(R)$ for every~$x \in M$ and~$R > 0$. We stress that for the last two claims we took advantage of the~$\Haus^n$-measurability of~$M$ and of standard formulas for the measure of increasing unions and decreasing intersections of sets.
\end{proof}

In the following result we apply the double layer potential identity of Lemma~\ref{fundsollem} with~$\Omega_b=\Omega\cap B_R(x)$. This allows us to obtain a localized version of Lemma~\ref{mainHalphaestlem}. 

\begin{lemma}
\label{lemma:localized double layer pot}
Let~$\alpha\in(0,1)$ and~$\Omega\subset\R^{n+1}$ be an open convex set. Then,
\begin{align*}
  \frac{|\mathbb{S}^n|}{2}
  & = 
  \int_{\partial\Omega\cap B_R(x)}\frac{(y-x)\cdot \nu(y)}{|y-x|^{n+1}} \, dy
  +
  \frac{|\Omega\cap \partial B_R(x)|}{R^n}
  \smallskip 
  \\
  & \leq
  |\partial\Omega\cap B_R(x)|^{\frac{\alpha}{n+\alpha}}H_{\alpha}(x)^{\frac{n}{n+\alpha}}+\frac{|\Omega\cap \partial B_R(x)|}{R^n}
\end{align*}
for every~$R>0$ and almost every~$x\in\partial\Omega.$
\end{lemma}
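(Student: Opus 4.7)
The strategy is to apply the Gauss law identity of Lemma~\ref{fundsollem} to the auxiliary bounded domain $\Omega_b := \Omega \cap B_R(x)$ and then invoke Lemma~\ref{mainHalphaestlem} to bound the ``curved'' part of the resulting boundary integral.

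First, I would observe that $\Omega_b$ is a bounded open convex set (as an intersection of two such sets) and is therefore a bounded Lipschitz domain, to which Lemma~\ref{fundsollem} applies. The boundary decomposes, up to an $\Haus^n$-negligible set, as
$$
\partial \Omega_b = \bigl(\partial \Omega \cap B_R(x)\bigr) \,\cup\, \bigl(\Omega \cap \partial B_R(x)\bigr).
$$
Since $x$ lies in the interior of $B_R(x)$, a neighborhood of $x$ in $\partial \Omega_b$ coincides with $\partial \Omega$, so $\partial \Omega_b$ is differentiable at $x$ precisely when $\partial \Omega$ is, which by Rademacher's theorem happens for a.e.\ $x \in \partial \Omega$. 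On the first piece $\partial \Omega \cap B_R(x)$, the exterior unit normal to $\Omega_b$ agrees a.e.\ with $\nu$, while on the second piece $\Omega \cap \partial B_R(x)$ it equals $(y - x)/R$; in particular, on this second piece
$$
\frac{(y - x) \cdot \nu_{\Omega_b}(y)}{|y - x|^{n+1}} = \frac{|y-x|^2/R}{R^{n+1}} = \frac{1}{R^{n}}.
$$

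Applying Lemma~\ref{fundsollem} to $\Omega_b$ and splitting the boundary integral according to this decomposition yields, for a.e.\ $x \in \partial \Omega$,
$$
\frac{|\mathbb{S}^n|}{2} = \PV \! \int_{\partial \Omega \cap B_R(x)} \frac{(y - x) \cdot \nu(y)}{|y - x|^{n+1}} \, dy \,+\, \frac{|\Omega \cap \partial B_R(x)|}{R^n},
$$
which is the stated equality. Here the principal value is taken around the singularity at $x$; however, by convexity of $\Omega$, the integrand is non-negative wherever defined, so by monotone convergence the principal value coincides with the Lebesgue integral in $[0, +\infty]$ (this justifies dropping the $\PV$ in the final formulation).

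Finally, to obtain the inequality, I would apply Lemma~\ref{mainHalphaestlem} with $E = \partial \Omega \cap B_R(x)$, which gives
$$
\int_{\partial \Omega \cap B_R(x)} \frac{(y - x) \cdot \nu(y)}{|y - x|^{n+1}} \, dy \le |\partial \Omega \cap B_R(x)|^{\frac{\alpha}{n+\alpha}} H_{\alpha}(x)^{\frac{n}{n+\alpha}}.
$$
Substituting into the equality concludes the proof. The only delicate point is the bookkeeping around the boundary decomposition and the identification of the normal on each piece; once $\Omega_b$ is recognized as a bounded Lipschitz convex domain, everything reduces mechanically to the two lemmas already at hand.
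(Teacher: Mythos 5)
Your route is exactly the paper's: apply the Gauss law of Lemma~\ref{fundsollem} to $\Omega_b=\Omega\cap B_R(x)$, identify the exterior normal on the spherical piece so that it contributes $|\Omega\cap\partial B_R(x)|/R^n$, and bound the curved piece via Lemma~\ref{mainHalphaestlem} with $E=\partial\Omega\cap B_R(x)$. The normal identification, the reduction of differentiability of $\partial\Omega_b$ at $x$ to that of $\partial\Omega$, the remark that the principal value coincides with the (nonnegative) Lebesgue integral by convexity, and the final application of Lemma~\ref{mainHalphaestlem} all match the paper and are fine.

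The one genuine gap is the unproved assertion that $\partial\Omega_b$ coincides, up to an $\Haus^n$-negligible set, with $\left(\partial\Omega\cap B_R(x)\right)\cup\left(\Omega\cap\partial B_R(x)\right)$. The discrepancy is contained in $\partial\Omega\cap\partial B_R(x)$, and this set can carry positive $\Haus^n$-measure for particular centers $x$: take $\Omega=B_R(0)\cap\{y_{n+1}>0\}$ and $x=0$, for which $\partial\Omega\cap\partial B_R(x)$ is a half-sphere; at this $x$ (where $\partial\Omega$ is even smooth) your claimed identity is false, since both terms on its right-hand side vanish while the Gauss-law integral over $\partial\Omega_b$ picks up the whole $|\mathbb{S}^n|/2$ from the omitted spherical part of $\partial\Omega$. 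So you must prove that the set of such bad points $x$ is $\Haus^n$-null, and---since the statement is for every $R>0$---null uniformly in $R$. This is precisely what the paper supplies through Lemma~\ref{Axcontlem}\ref{Discountable}: the set $D$ of points $x\in\partial\Omega$ for which \emph{some} sphere $\partial B_R(x)$ meets $\partial\Omega$ in positive measure is at most countable, hence negligible, and the decomposition then holds for every $R$ at every $x\notin D$. With that countability argument (or an equivalent one) inserted, your proof is complete and identical to the paper's; without it, the ``up to a negligible set'' step is unjustified.
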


\begin{proof}
First, recall that, as~$\Omega$ is convex, its boundary is Lipschitz and has therefore locally finite~$n$-dimensional Hausdorff measure. Hence, we may apply Lemma~\ref{Axcontlem}\ref{Discountable} and deduce that, for all but a countable number of points~$x \in \partial \Omega$, it holds~$|\partial \Omega \cap \partial B_R(x)| = 0$ for every~$R > 0$. Moreover,~$\partial \Omega$ is differentiable at almost all of such points.

Consider the convex set~$\Omega_b=\Omega\cap B_R(x)$. Its boundary~$\partial\Omega_b$ is therefore Lipschitz and, in addition, it is equal, up to a set of measure zero, to the disjoint union of the two sets~$\partial\Omega\cap B_R(x)$ and~$\Omega\cap\partial B_R(x)$---we used here the fact, noted earlier, that~$|\partial \Omega \cap \partial B_R(x)| = 0$. Applying the double layer potential identity of Lemma \ref{fundsollem}, we get
\begin{align*}
  \frac{|\mathbb{S}^n|}{2}
  =&
   \int_{\partial \Omega_b}\frac{(y-x)\cdot\nu(y)}{|y-x|^{n+1}} \, dy
  \smallskip
  \\
  =& \int_{\partial \Omega\cap B_R(x)}\frac{(y-x)\cdot\nu(y)}{|y-x|^{n+1}} \, dy
  +\int_{\Omega\cap \partial B_R(x)}\frac{(y-x)\cdot\nu(y)}{|y-x|^{n+1}} \, dy.
\end{align*}
As~$(y-x)\cdot\nu(y)=|y-x|=R$ for all~$y$ on~$\partial B_R(x),$ the first identity in the lemma is proved. The second inequality follows from Lemma~\ref{mainHalphaestlem}, applied to the set~$E=\partial\Omega\cap B_R(x).$
\end{proof}

When~$\partial\Omega$ has low density around a point~$x$ at a certain scale, we will absorb the last term in the inequality of Lemma \ref{lemma:localized double layer pot} within its left-hand side. For this we will need the following reverse perimeter-energy estimate.

\begin{lemma} \label{isoplem}
Let~$\Omega \subset \R^{n + 1}$ be an open convex set,~$x \in \partial \Omega,$ and $R>0.$ Then,
$$
  |\Omega \cap \partial B_R(x)| \le \frac{C_n}{R} \, |\delomega \cap B_R(x)|^{\frac{n + 1}{n}}
$$
for some constant $C_n\geq 1$ depending only on $n.$
\end{lemma}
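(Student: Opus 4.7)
The plan is to combine three ingredients: the existence of a supporting hyperplane to $\Omega$ at $x$, the \emph{relative} isoperimetric inequality in the ball $B_R(x)$, and a cone construction with apex at $x$. Throughout, denote $V := |\Omega \cap B_R(x)|$ (the $(n+1)$-dimensional Lebesgue measure), and $P := |\partial \Omega \cap B_R(x)|$, $S := |\Omega \cap \partial B_R(x)|$ (both $n$-dimensional Hausdorff measures).

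Since $\Omega$ is open and convex and $x \in \partial \Omega$, there exists a supporting hyperplane to $\Omega$ through $x$. Hence $\Omega \cap B_R(x)$ is contained in a half-ball of $B_R(x)$, which gives the key smallness $V \leq |B_R|/2 \leq |B_R \setminus \Omega|$. Recalling that $\partial \Omega$ is Lipschitz, $\Omega$ has locally finite perimeter and its perimeter inside $B_R(x)$ coincides with $P$. Applying the relative isoperimetric inequality in the ball $B_R(x)$ to the set $\Omega$ (with the roles of $\Omega$ and its complement identified through the bound $V \leq |B_R|/2$), we thus obtain
\[
V^{n/(n+1)} \leq \widetilde{C}_n \, P
\]
for some dimensional constant $\widetilde{C}_n$.

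On the other hand, by convexity, for every $p \in \Omega \cap \partial B_R(x)$ the segment $[x, p]$ lies in $\overline{\Omega} \cap \overline{B_R(x)}$. Consequently, the cone
\[
\mathcal{C}_x := \{x + \rho \, \omega : \rho \in [0, R], \, \omega \in \mathbb{S}^n, \, x + R\omega \in \Omega \cap \partial B_R(x)\}
\]
is contained in $\overline{\Omega} \cap \overline{B_R(x)}$, and a direct computation in spherical coordinates centered at $x$ yields $|\mathcal{C}_x| = R S/(n+1)$. In particular $V \geq R S/(n+1)$, which combined with the isoperimetric bound above gives
\[
\left( \frac{R S}{n+1} \right)^{n/(n+1)} \leq \widetilde{C}_n \, P,
\]
and the claim follows with $C_n = (n+1) \, \widetilde{C}_n^{(n+1)/n}$ after rearrangement.

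The delicate point is the use of the \emph{relative} isoperimetric inequality in the ball, rather than the standard Euclidean isoperimetric inequality applied to the convex body $\Omega \cap B_R(x)$. The latter would only produce $V^{n/(n+1)} \leq c_n (P + S)$; the presence of $S$ on the right-hand side would then prevent one from isolating $S$ whenever $S > P$, leading to an awkward dichotomy. The relative version eliminates this issue at the cost of requiring $V \leq |B_R|/2$, which is precisely granted, free of charge, by the supporting hyperplane at the boundary point $x$ of the convex set $\Omega$.
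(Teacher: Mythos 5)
Your proof is correct and follows essentially the same route as the paper's: a cone with apex at $x$ over $\Omega \cap \partial B_R(x)$ bounds $|\Omega\cap\partial B_R(x)|$ by $(n+1)\,\mathcal{H}^{n+1}(\Omega\cap B_R(x))/R$, and the relative isoperimetric inequality in $B_R(x)$, together with convexity, controls that volume by $|\partial\Omega\cap B_R(x)|^{(n+1)/n}$. The only difference is presentational: you spell out the supporting-hyperplane/half-ball argument guaranteeing that the minimum in the relative isoperimetric inequality is $\mathcal{H}^{n+1}(\Omega\cap B_R(x))$, a point the paper invokes simply as ``using again the convexity of $\Omega$.''
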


\begin{proof}
Of course, we can assume that~$\Omega \not\subset B_R(x)$, since otherwise there is nothing to prove. Let~$\CC$ be the open cone of vertex~$x$ spanned by~$\Omega \cap \partial B_R(x)$. By the coarea formula  and the homogeneity of cones (here~$\mathcal{H}^{n+1}$ denotes the~$(n+1)$-dimensional Lebesgue measure in~$\R^{n+1}$),
$$
  \mathcal{H}^{n+1}(\CC \cap B_R(x)) = \int_0^R |\CC \cap \partial B_\rho(x)| \, d\rho = \frac{|\CC \cap \partial B_R(x)|}{R^n} \int_0^R \rho^n \, d\rho = \frac{|\CC \cap \partial B_R(x)| R}{n + 1}.
$$
Moreover, as~$\Omega \cap B_R(x)$ is convex, we have that~$\CC \cap B_R(x) \subset \Omega \cap B_R(x)$. Consequently,
\begin{align*}
  |\Omega \cap \partial B_R(x)| &= |\CC \cap \partial B_R(x)| = (n + 1) \frac{\mathcal{H}^{n+1}(\CC \cap B_R(x))}{R} \\
  & \le (n + 1) \frac{\mathcal{H}^{n+1}(\Omega \cap B_R(x))}{R}.
\end{align*}

Now, by the relative isoperimetric inequality in Euclidean balls (see, e.g.,~\cite[Proposition~12.37 and Remark~12.38]{M12}),
$$
  \min\left\{ \mathcal{H}^{n+1}(\Omega\cap B_R(x)),
  \mathcal{H}^{n+1}(B_R(x)\setminus\Omega)\right\}
  \leq C_I|\delomega\cap B_R(x)|^{\frac{n+1}{n}}
$$
for some constant~$C_I$ depending only on~$n$. Using again the convexity of~$\Omega$ to ensure that the minimum on the left-hand side is~$\mathcal{H}^{n+1}(\Omega \cap B_R(x))$, we conclude that
$$
  |\Omega \cap \partial B_R(x)| \le \frac{C_n}{R} \, |{\delomega} \cap B_R(x)|^{\frac{n + 1}{n}},
$$
where~$C_n= (n + 1) C_I$.
\end{proof}

To deal with the second case in the dichotomy~\ref{(iii)}, where the point~$x\in\delomega$ has large density for some radii~$R,$ we will need the following isodiametric inequality for the perimeter of convex sets.

\begin{proposition}[Rosenthal-Sz\'asz type inequality; see, e.g.,~\cite{BF87}] \label{RSineprop}
Let~$\Omega \subset \R^{n + 1}$ be a bounded open convex set. Then,
\begin{equation}
 \label{eq:Rosenthal szasz ineq in Prop}
\frac{|\delomega|}{\diam(\Omega)^n} \le \frac{|\partial B_1|}{\diam(B_1)^n} = \frac{|\mathbb{S}^n|}{2^n}.
\end{equation}

As a consequence,
\begin{equation}
 \label{eq:perestcor}
  |\delomega \cap B_R(x)| \le |\mathbb{S}^n| R^n
\end{equation}
for every open convex set~$\Omega \subset \R^{n + 1}$,~$x\in\delomega$, and~$R>0.$
\end{proposition}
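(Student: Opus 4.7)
The plan is to combine Cauchy's projection formula for the surface area of a convex body with the classical isodiametric inequality in dimension~$n$. The consequence~\eqref{eq:perestcor} will then follow from~\eqref{eq:Rosenthal szasz ineq in Prop} applied to the bounded open convex set $\Omega\cap B_R(x)$.

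First, for any bounded open convex set $\Omega \subset \R^{n+1}$, I would invoke the Cauchy projection formula
$$
|\partial \Omega| = \frac{1}{|B_1^n|} \int_{\mathbb{S}^n} |\pi_u(\overline{\Omega})| \, du,
$$
where $\pi_u$ denotes orthogonal projection onto the hyperplane $u^{\perp} \cong \R^n$ and $|B_1^n|$ is the Lebesgue measure of the unit ball in $\R^n$. This identity is standard in convex geometry; one may verify it directly for the unit ball and extend it by polytopal approximation, since each face $F$ of a convex polytope with outer unit normal $\nu_F$ contributes $|F|\,|\nu_F \cdot u|$ to the integrand. Orthogonal projection does not increase distances, so the convex set $\pi_u(\overline{\Omega}) \subset u^\perp$ has diameter at most $d := \diam(\Omega)$; the classical isodiametric inequality in $\R^n$ then gives $|\pi_u(\overline{\Omega})| \le |B_1^n|(d/2)^n$. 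Inserting this bound into the Cauchy formula and integrating yields $|\partial \Omega| \le |\mathbb{S}^n|(d/2)^n$, which is precisely~\eqref{eq:Rosenthal szasz ineq in Prop}.

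For~\eqref{eq:perestcor}, given any open convex $\Omega$ (possibly unbounded), $x \in \partial\Omega$, and $R>0$, I would apply~\eqref{eq:Rosenthal szasz ineq in Prop} to $\Omega_R := \Omega \cap B_R(x)$, a bounded open convex set of diameter at most $2R$. Any point of $\partial\Omega$ lying in the open ball $B_R(x)$ is also a boundary point of $\Omega_R$, so $\partial\Omega \cap B_R(x) \subset \partial \Omega_R$ and hence
$$
|\partial\Omega \cap B_R(x)| \le |\partial \Omega_R| \le \frac{|\mathbb{S}^n|}{2^n}\,(2R)^n = |\mathbb{S}^n|\,R^n.
$$

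The main technical obstacle I anticipate is writing a self-contained justification of Cauchy's projection formula, which, although classical, is not completely trivial. A fully alternative route is a symmetrization argument using the difference body $\Omega^\ast := (\Omega - \Omega)/2$: this is a convex, centrally symmetric set whose diameter does not exceed $d$, so by central symmetry $\Omega^\ast \subset \overline{B}_{d/2}$. Combining the monotonicity of perimeter under inclusion of convex sets with the Brunn--Minkowski inequality for surface area $|\partial(K+L)|^{1/n} \ge |\partial K|^{1/n} + |\partial L|^{1/n}$, applied to $K = \Omega$ and $L = -\Omega$, would give $|\partial \Omega| \le |\partial \Omega^\ast| \le (d/2)^n |\mathbb{S}^n|$, recovering~\eqref{eq:Rosenthal szasz ineq in Prop}.
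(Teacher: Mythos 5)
Your main argument is correct and essentially identical to the paper's proof in Appendix~\ref{appendix:rosenthal-szasz}: Cauchy's surface area formula (established by polytopal approximation and the face decomposition) combined with the isodiametric inequality for volume applied to the projections, and the consequence~\eqref{eq:perestcor} deduced exactly as in the paper by applying~\eqref{eq:Rosenthal szasz ineq in Prop} to $\Omega\cap B_R(x)$ together with $\partial\Omega\cap B_R(x)\subset\partial(\Omega\cap B_R(x))$. The alternative difference-body route you sketch is a valid classical variant, but your primary proof follows the paper's approach.
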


The Rosenthal-Sz\'asz inequality~\eqref{eq:Rosenthal szasz ineq in Prop} is classical and probably well-known to expert readers. It is stated in Section~44 of~\cite{BF87} as inequality~(6) and proved in that monograph throughout several sections. Since we could not find a reference with a short proof of the inequality, we will include it in Appendix~\ref{appendix:rosenthal-szasz}. Estimate~\eqref{eq:perestcor} is immediately deduced by applying~\eqref{eq:Rosenthal szasz ineq in Prop} to the bounded open convex set~$\Omega \cap B_R(x)$ and using that~$\partial \Omega \cap B_R(x) \subset \partial ( \Omega \cap B_R(x) )$.

Observe that~\eqref{eq:Rosenthal szasz ineq in Prop} carries the optimal constant.
For our purposes, we only need~\eqref{eq:perestcor}, and we do not need it with its best constant. That is, we will only use that $|\delomega\cap B_R(x)|\leq C R^n,$ for some dimensional constant $C,$ for every open convex set $\Omega$. We also include in Appendix~\ref{appendix:rosenthal-szasz} a simple proof of this non-optimal inequality.

We have now all the preliminary results to prove Proposition~\ref{pointwiseconvexMSprop}.

\begin{proof}[Proof of Proposition~\ref{pointwiseconvexMSprop}.]
Without loss of generality we can take~$E$ to be bounded, by proving the proposition first for~$E_k := E\cap B_{k}(x),$ $k\in\mathbb{N}$, and then letting~$k\to+\infty$. To this aim, notice that
$$
\left| \int_{\delomega \setminus E_k} \frac{{dy}}{|x - y|^{n + s p}} - \int_{\delomega \setminus E} \frac{{dy}}{|x - y|^{n + s p}} \right| = \int_{E \setminus B_k(x)} \frac{{dy}}{|x - y|^{n + s p}} \le \frac{|E|}{k^{n + s p}} \longrightarrow 0
$$
as~$k \rightarrow +\infty$, since~$|E| < + \infty$.

Since now~$E$ is bounded, we can assume, using Lemmas~\ref{lemma:E can be delomega cap BR} and~\ref{Axcontlem}, that~$E$ is of the form
\begin{equation} \label{Eisround}
E=\delomega\cap B_R(x),
\end{equation}
for some~$R > 0$. Indeed, by Lemma~\ref{Axcontlem}\ref{Axprops}, the function~$R \mapsto |\partial \Omega \cap B_R(x)|$ is continuous for almost every~$x \in \partial \Omega$. Thus, we can clearly choose a radius~$R>0,$ depending on~$x,$ such that~$|\partial \Omega \cap B_R(x)|=|E|$. Now, Lemma~\ref{lemma:E can be delomega cap BR} says that, replacing~$E$ by~$\delomega \cap B_R(x),$ the right-hand side of~\eqref{pointwiseconvexMSine} does not increase, while its left-hand side remains unaltered. We can therefore take~$E$ to be given by~\eqref{Eisround}. 

We now distinguish between three cases, involving different assumptions on the density of~$\partial\Omega$ around~$x$. We will compare the density~$\rho^{-n}|\partial\Omega\cap B_\rho(x)|$ with the dimensional constant
$$
  \delta := \min\left\{|\mathbb{S}^n|,\left(\frac{|\mathbb{S}^n|}{4 C_n}\right)^{\frac{n}{n+1}}\right\}
$$
at the two different scales~$\rho = R$ and~$\rho = TR$, where~$C_n \ge 1$ is the constant from Lemma~\ref{isoplem} and
$$
T := \left( \frac{2 |\mathbb{S}^n|}{\delta} \right)^{\frac{1}{n}} > 1.
$$

\textit{Case 1.} Assume that
\begin{equation} \label{case1deltacond}
  |\delomega \cap B_R(x)| \le \delta R^n.
\end{equation}
Using Lemma~\ref{lemma:localized double layer pot}, we deduce that
$$
\frac{|\mathbb{S}^n|}{2} \leq |\partial\Omega\cap B_R(x)|^{\frac{\alpha}{n+\alpha}}H_{\alpha}(x)^{\frac{n}{n+\alpha}}+\frac{|\Omega\cap \partial B_R(x)|}{R^n}.
$$
We estimate the second term on the right with the aid of Lemma~\ref{isoplem}, assumption~\eqref{case1deltacond}, and the definition of~$\delta$, getting that
$$
\frac{|\Omega\cap \partial B_R(x)|}{R^n} \le C_n \delta^{\frac{n + 1}{n}} \le \frac{|\mathbb{S}^n|}{4}.
$$
The combination of the previous two inequalities leads us to
\begin{equation} \label{claimincase1}
\frac{|\mathbb{S}^n|}{4} \leq |\partial\Omega\cap B_R(x)|^{\frac{\alpha}{n+\alpha}}H_{\alpha}(x)^{\frac{n}{n+\alpha}},
\end{equation}
which, recalling~\eqref{Eisround}, establishes~\eqref{pointwiseconvexMSine} in this first case.

\textit{Case 2.} We assume now that
\begin{equation} \label{case2deltacond}
  |\delomega\cap B_R(x)|>\delta R^n \quad \mbox{ and } \quad |\delomega \cap B_{T R}(x)| \le \delta (T R)^n.
\end{equation}
Arguing exactly as for~\eqref{claimincase1}, but now with $R$ replaced by $TR$, we obtain that
$$
\frac{|\mathbb{S}^n|}{4} \leq |\partial\Omega\cap B_{T R}(x)|^{\frac{\alpha}{n+\alpha}}H_{\alpha}(x)^{\frac{n}{n+\alpha}}.
$$
The two inequalities in~\eqref{case2deltacond} give that~$|\delomega \cap B_{T R}(x)| \le T^n |\delomega\cap B_R(x)| = T^n |E|$. Hence,
$$
\frac{|\mathbb{S}^n|}{4} \leq T^{\frac{n \alpha}{n + \alpha}} |E|^{\frac{\alpha}{n + \alpha}} H_{\alpha}(x)^{\frac{n}{n+\alpha}},
$$
which yields~\eqref{pointwiseconvexMSine} with a new constant~$C$.

\textit{Case 3.} Finally, we assume that
\begin{equation*}
  \qquad |\delomega \cap B_R(x)| > \delta R^n\quad\text{ and }\quad
  |\delomega\cap B_{TR}(x)|> \delta (TR)^n.
\end{equation*}
Taking advantage of the perimeter bound~\eqref{eq:perestcor}, we see that
\begin{align*}
  |\delomega \cap \left( B_{T R}(x) \setminus B_R(x) \right)| & = |\delomega \cap B_{T R}(x)| - |\delomega \cap B_R(x)| \\
& \ge \delta (T R)^n - |\mathbb{S}^n| R^n = |\mathbb{S}^n| R^n.
\end{align*}
Consequently, we find that
\begin{align*}
  \int_{\delomega \setminus E} \frac{{dy}}{|y - x|^{n + s {p}}} & \ge \int_{\delomega \cap \left( B_{T R}(x) \setminus B_R(x) \right)} \frac{{dy}}{|y - x|^{n + s{p}}} \ge \frac{|\delomega \cap \left( B_{T R}(x) \setminus B_R(x) \right)|}{(T R)^{n + s{p}}} 
  \\
  & \ge \frac{|\mathbb{S}^n|}{T^{n + s{p}}} R^{- s{p}}\ge \frac{|\mathbb{S}^n| \delta^{\frac{s{p}}{n}}}{T^{n + s{p}}} \, |\delomega \cap B_R(x)|^{- \frac{s{p}}{n}} = \frac{|\mathbb{S}^n| \delta^{\frac{s{p}}{n}}}{T^{n + s{p}}} \, |E|^{- \frac{s{p}}{n}},
\end{align*}
which yields~\eqref{pointwiseconvexMSine} once again for some constant $C.$ 

As this was the last case, the proof of Proposition~\ref{pointwiseconvexMSprop} is finished.
\end{proof}

\section{Fractional Michael-Simon inequality for functions} \label{section:fromsetstofunctions}

\noindent
In this section we establish our main result, inequality~\eqref{eq:intro:sAMSonconvex} of Theorem~\ref{thm:intro:Convex MS for functions}. Namely, that for every measurable function~$u \in W^{s, p}(\delomega)$ it holds
\begin{equation}\label{eq:Sec 4 main inequal}
  \|{u}\|_{L^{p^{\ast}_s\!}({\delomega})}\leq C\left(\frac{1}{2}\int_{{\delomega}}\int_{{\delomega}} \frac{|{u}(x)-{u}(y)|^p}{|x-y|^{n+sp}} \,{dx}{dy}+\int_{{\delomega}}H_{\alpha}(x)^{\frac{s p}{\alpha}} |{u}(x)|^p \, dx
  \right)^{\frac{1}{p}},
\end{equation}
where~$C$ is a constant depending only on~$n$,~$\alpha$,~$s$, and~$p$.

We first give a proof when $p=1.$ This is simple and based on the fractional coarea formula of Visintin \cite{Visintin}. This first proof gives the same constant in \eqref{eq:Sec 4 main inequal} as the one in the isoperimetric inequality 
\eqref{old theorem frac MS for sets} of Theorem~\ref{thm:intro:Convex MS for functions}, which also agrees with the constant in the pointwise inequality of Proposition~\ref{pointwiseconvexMSprop}.

It is important to point out that in contrast with the local case, for $p>1$ it is not known how to derive a fractional Sobolev inequality from a corresponding fractional isoperimetric inequality, even in Euclidean space. Thus we give a second proof of our fractional Sobolev inequality that is valid for all~$p\geq 1.$
For~$p = 1$, it gives a worse constant than the one found via the coarea formula.
This second argument follows very closely the slicing procedure of Savin and Valdinoci~\cite{Savin Valdinoci 1} and Di Nezza, Palatucci, and Valdinoci~\cite[Section 6]{HitchhikersG}, with the necessary modifications to cope with the term involving~$H_{\alpha}$.

For the first proof, we will need the following version of the fractional coarea formula.

\begin{lemma}[Fractional coarea formula on manifolds]
\label{lem:frac coarea on manifold}
Let~${M}\subset \R^{n+1}$ be a Lipschitz hypersurface,~$s\in (0,1)$, and~${u}:{M}\to \R$ be a measurable function. Then
$$
  \frac{1}{2}\int_{{M}}\int_{{M}}\frac{|{u}(x)-{u}(y)|}{|x-y|^{n+s}} \, {dx}{dy}=\int_{-\infty}^{+\infty}\Per_{{M},s}(\{{u}>t\}) \, dt.
$$
\end{lemma}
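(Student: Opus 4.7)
The plan is to follow Visintin's classical strategy, adapted to the hypersurface setting: combine a pointwise layer-cake decomposition of $|u(x)-u(y)|$ with Tonelli's theorem to swap the $t$-integral past the double integral over $M\times M$. All quantities involved are non-negative, so integrability issues never obstruct the exchange of integration order and the identity will hold with both sides taking values in $[0,+\infty]$.

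The first step is the pointwise identity
$$
|a-b| = \int_{-\infty}^{+\infty} \bigl|\chi_{(t,+\infty)}(a)-\chi_{(t,+\infty)}(b)\bigr|\,dt \qquad \text{for all } a,b\in\R,
$$
which is immediate: if $a\leq b$, the integrand equals $1$ on $[a,b)$ and vanishes elsewhere, giving $b-a$; the other case is symmetric. Setting $a=u(x)$ and $b=u(y)$ and noting that $\chi_{(t,+\infty)}(u(z))=\chi_{\{u>t\}}(z)$ converts this into a decomposition of $|u(x)-u(y)|$ as a superposition of characteristic-function differences of the super-level sets of $u$.

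Second, I would plug this identity into the left-hand side of the claim and invoke Tonelli's theorem to get
\begin{equation*}
\frac{1}{2}\int_{M}\!\!\int_{M}\frac{|u(x)-u(y)|}{|x-y|^{n+s}}\,dx\,dy
= \int_{-\infty}^{+\infty}\!\! \left(\frac{1}{2}\int_{M}\!\!\int_{M}\frac{|\chi_{\{u>t\}}(x)-\chi_{\{u>t\}}(y)|}{|x-y|^{n+s}}\,dx\,dy\right) dt.
\end{equation*}
To finish, I need to recognize the inner double integral as $\Per_{M,s}(\{u>t\})$. For any measurable $E\subset M$, the quantity $|\chi_E(x)-\chi_E(y)|$ equals $1$ if and only if exactly one of $x,y$ belongs to $E$; splitting $M\times M$ into the four products $E\times E$, $E\times(M\setminus E)$, $(M\setminus E)\times E$, and $(M\setminus E)\times(M\setminus E)$ and exploiting the symmetry $(x,y)\leftrightarrow(y,x)$ yields
$$
\frac{1}{2}\int_{M}\!\!\int_{M}\frac{|\chi_E(x)-\chi_E(y)|}{|x-y|^{n+s}}\,dx\,dy = \int_{E}\!\!\int_{M\setminus E}\frac{dx\,dy}{|x-y|^{n+s}} = \Per_{M,s}(E).
$$
Applied with $E=\{u>t\}$, this concludes the proof.

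I expect no serious obstacle: since $u$ is measurable and $M$ is a Lipschitz hypersurface, each super-level set $\{u>t\}$ is $\Haus^n$-measurable in $M$, and the map $(x,y,t)\mapsto |\chi_{\{u>t\}}(x)-\chi_{\{u>t\}}(y)|/|x-y|^{n+s}$ is jointly measurable, which is all that Tonelli requires. The only mild subtlety is that the identity is an equality in $[0,+\infty]$, to be understood allowing that both sides may be infinite; this is consistent with the intended use of the lemma, where it will be applied to functions having finite Gagliardo semi-norm.
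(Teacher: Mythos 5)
Your proof is correct and follows essentially the same route as the paper: a layer-cake decomposition of $|u(x)-u(y)|$ via super-level sets (your $|\chi_{\{u>t\}}(x)-\chi_{\{u>t\}}(y)|$ is pointwise identical to the paper's sum $\chi_{\{u>t\}}(x)\chi_{\{u\le t\}}(y)+\chi_{\{u\le t\}}(x)\chi_{\{u>t\}}(y)$), followed by Tonelli/Fubini on $\R\times M\times M$ and the identification of the inner double integral with $\Per_{M,s}(\{u>t\})$ via symmetry of the kernel. No gaps; your remark that both sides are understood as equalities in $[0,+\infty]$ is consistent with how the lemma is used.
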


\begin{proof}
Using the layer cake representation, one writes
\begin{equation}
 \label{eq:layer cake repr}
  {u}(x)-{u}(y)=\int_{-\infty}^{+\infty}\chi_{\{{u}>t\}}(x)\chi_{\{{u}\leq t\}}(y) \, dt,\quad\text{ if }{u}(x)>{u}(y).
\end{equation}
If~${u}(x)\leq {u}(y)$ then the right-hand side of~\eqref{eq:layer cake repr} vanishes. Therefore
$$
  |{u}(x)-{u}(y)|=\int_{-\infty}^{+\infty}\left(\chi_{\{{u}>t\}}(x)\chi_{\{{u}\leq t\}}(y)+\chi_{\{{u}\leq t\}}(x)\chi_{\{{u}>t\}}(y)
  \right)dt.
$$
Now, an application of Fubini theorem on~$\R\times {M}$ gives 
\begin{align*}
  \int_{{M}}\int_{{M}}\frac{|{u}(x)-{u}(y)|}{|x-y|^{n+s}} \, {dx}{dy}= 2 \int_{-\infty}^{+\infty} \left( \int_{{M}}\int_{{M}}\frac{\chi_{\{{u}>t\}}(x)\chi_{\{{u}\leq t\}}(y)}{|x-y|^{n+s}} \, {dx}{dy} \right) dt.
\end{align*}
Since
$$
\int_{{M}}\int_{{M}}\frac{\chi_{\{{u}>t\}}(x)\chi_{\{{u}\leq t\}}(y)}{|x-y|^{n+s}} \, {dx}{dy} = \int_{\{{u}>t\}}\int_{M \setminus \{{u}>t\}} \frac{{dy}{dx}}{|x-y|^{n+s}} = \Per_{{M},s}\left(\{u>t\}\right),
$$
we conclude the claim of the lemma.
\end{proof}

We can now give a

\begin{proof}[First proof of Theorem~\ref{thm:intro:Convex MS for functions} for~$p = 1$]
Without loss of generality we may assume ~$u$ to be non-negative. Indeed, the general case will then follow from this, by applying~\eqref{eq:Sec 4 main inequal} to~$|u|$ and noticing that~$\big| |u(x)| - |u(y)| \big| \le |u(x) - u(y)|$. We may also suppose that~$u$ has compact support---see the final argument in the proof of Theorem~\ref{thm:intro:Convex MS for functions} for~$p \ge 1$, presented later in this section, for details on how to remove this assumption.

From the expression
$$
u(x) = \int_0^{+\infty}\chi_{\{{u}>t\}}(x) \, dt,
$$
we use Minkowski's integral inequality to obtain that
\begin{align*}
\| u \|_{L^{\frac{n}{n - s}}(\delomega)} & = \left\|\int_0^{+\infty}\chi_{\{{u}>t\}} \, dt \, \right\|_{L^{\frac{n}{n-s}}({\delomega})} \\
& \le \int_0^{+\infty} \|\chi_{\{{u}>t\}}\|_{L^{\frac{n}{n-s}}({\delomega})} \, dt = \int_0^{+\infty} |\{ u > t\}|^{\frac{n - s}{n}} \, dt.
\end{align*}
We now apply the inequality of Proposition~\ref{pointwiseconvexMSprop} with~$E := \{ u > t \}$---observe that~$|\{ u > t\}| < +\infty$ as~$u$ has compact support. Integrating it over~$E$, we see that 
$$
  |\{{u}>t\}|^{\frac{n-s}{n}}\leq C\left(\Per_{{\delomega},s}(\{{u}>t\})+\int_{\{{u}>t\}} \, H_{\alpha}(x)^{\frac{s}{\alpha}} \, {dx}\right).
$$
By combining the last two estimates, we get that
\begin{equation}
 \label{eq:coarea form fbigger smaller t}
  \|{u}\|_{L^{\frac{n}{n - s}}({\delomega})} \le C\int_0^{+\infty}\left(\Per_{{\delomega},s}(\{{u}>t\})+\int_{\{{u}>t\}} H_{\alpha}(x)^{\frac{s}{\alpha}} \, {dx}\right)dt.
\end{equation}
Finally, by Fubini's theorem we have
\begin{align*}
 \int_0^{+\infty} \left( \int_{\{{u}>t\}}H_{\alpha}(x)^{\frac{s}{\alpha}} \, {dx} \right) dt
 =&
 \int_{{\delomega}\cap \{{u}>0\}} H_{\alpha}(x)^{\frac{s}{\alpha}} |{u}(x)| \,{dx}.
\end{align*}
Plugging this into~\eqref{eq:coarea form fbigger smaller t} and using Lemma~\ref{lem:frac coarea on manifold} we deduce
$$
  \|{u}\|_{L^{\frac{n}{n-s}}({\delomega})}\leq C\left(\frac{1}{2}
  \int_{{\delomega}}\int_{{\delomega}}\frac{|{u}(x)-{u}(y)|}{|x-y|^{n+s}} \, {dx}{dy}+
  \int_{{\delomega}}H_{\alpha}(x)^{\frac{s}{\alpha}}|{u}(x)| \,{dx}
  \right),
$$
This settles the theorem for $p=1$.
\end{proof}

We now present an adaptation of the slicing procedure of~\cite{Savin Valdinoci 1}. It will lead to the proof of Theorem~\ref{thm:intro:Convex MS for functions} in the general case $p \ge 1$. 

We first introduce some notation.
Let~${u}:{\delomega}\to \R$ be a bounded and non-negative measurable function with compact support. For~$i \in \Z$, we write
\begin{align*}
  A_i & :=\{{u}> 2^i\},\quad a_i:=|A_i|,
  \\
  D_i & := A_i\setminus A_{i+1}=\left\{ 2^i< {u}\leq 2^{i+1}\right\},\quad\text{ and }\quad d_i:=|D_i|.
\end{align*}
We have that the sets $D_i$ are pairwise disjoint,
$$
  \{ u = 0\} \cup \bigcup_{\substack{ j\in \mathbb{Z}\\j\leq i}}D_j= \partial \Omega \setminus A_{i+1},\quad
  \bigcup_{\substack{ j\in\mathbb{Z}\\ j\geq i}} D_j=A_i\,,\quad\text{ and }\quad 
  a_i=\sum_{\substack{j\in \mathbb{Z}\\ j\geq i}} d_j.
$$

We will need the following auxiliary lemma---see~\cite[Lemma 6.2]{HitchhikersG}
for its proof, which is very short and only uses H\"older's inequality.
Note that, as~$u$ is bounded, non-negative, and has compact support, our sequence~$\{ a_i \}$ satisfies the hypotheses of the lemma for some~$N\in\mathbb{Z}.$

\begin{lemma}
\label{lemma:ineq of sums ak and akplus1}
Let~$s\in (0,1),$~$p\geq 1$ such that~$s p<n$, and~$N\in\mathbb{Z}.$ Suppose~$\{ a_i \}_{i \in \Z}$ is a bounded, non-negative, and non-increasing sequence with 
$$
  a_i=0\quad\text{ for all~$i\geq N$}.
$$
Then
$$
  \sum_{i\in \mathbb{Z}} 2^{pi} a_i^{(n-s p)/n} \leq 2^{p^{\ast}_s}\sum_{\substack {i\in\mathbb{Z} \\ a_i\neq 0}}2^{pi}a_i^{-s p/n}a_{i+1}.
$$
\end{lemma}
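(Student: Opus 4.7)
This lemma is a purely discrete statement about non-negative, non-increasing, eventually-vanishing sequences. My plan is to follow the short argument given by Di~Nezza, Palatucci, and Valdinoci in \cite[Lemma 6.2]{HitchhikersG}, which relies on a single use of H\"older's inequality after a natural splitting of the summand. I would begin by introducing the difference sequence $d_j := a_j - a_{j+1} \ge 0$, which is non-negative by monotonicity and finitely supported thanks to the hypothesis that $a_i = 0$ for $i \ge N$. Writing $a_i^{(n-sp)/n} = a_i^{-sp/n}(a_{i+1} + d_i)$ yields the decomposition
$$
\sum_{i \in \Z} 2^{pi} a_i^{(n-sp)/n} = \sum_{\substack{i \in \Z \\ a_i \ne 0}} 2^{pi} a_i^{-sp/n} a_{i+1} + \sum_{\substack{i \in \Z \\ a_i \ne 0}} 2^{pi} a_i^{-sp/n} d_i,
$$
whose first piece is already the right-hand side of the lemma (modulo the constant $2^{p^{\ast}_s}$). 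The whole task therefore reduces to controlling the second, ``error'' piece by a constant comfortably smaller than $2^{p^{\ast}_s}-1$ times the same expression, so that it can be absorbed into the first.

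To obtain this control I would apply H\"older's inequality with the conjugate exponents $n/(n-sp)$ and $n/(sp)$---these are the natural ones, dictated by the algebra $(n-sp)/n + sp/n = 1$ and by the form of the critical exponent $p^{\ast}_s = np/(n-sp)$. The summand $2^{pi} a_i^{-sp/n} d_i$ should be split into two factors so that, after H\"older, one of the resulting sums reproduces (a power of) $\sum_i 2^{pi} a_i^{-sp/n} a_{i+1}$, while the other is bounded by (a power of) the left-hand side of the lemma itself. A standard bootstrap/absorption step would then close the argument and yield the claimed constant.

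The main obstacle I anticipate is precisely this choice of H\"older decomposition: the split must be engineered so that none of the factors blows up at indices where $a_{i+1} = 0$---which do occur, since $\{a_i\}$ is finitely supported---and so that the geometric weights $2^{pi}$ recombine into convergent series that reproduce the two sums mentioned above. The finite-support hypothesis $a_i = 0$ for $i \ge N$ plays a silent but essential role throughout, guaranteeing the absolute convergence of every intermediate sum and the vanishing of boundary terms in any reindexing or summation-by-parts manipulation.
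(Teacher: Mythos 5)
Your proposal stops exactly where the proof begins. Writing $a_i^{(n-sp)/n}=a_i^{-sp/n}(a_{i+1}+d_i)$ only re-expresses the tautology $T=S+(T-S)$, where $T:=\sum_{i}2^{pi}a_i^{(n-sp)/n}$ is the left-hand sum and $S:=\sum_{i:\,a_i\neq 0}2^{pi}a_i^{-sp/n}a_{i+1}$ is the sum on the right; thus ``controlling the error piece by a multiple of $S$'' is verbatim the statement to be proved, and all the content is delegated to the unspecified H\"older split, which you yourself flag as the main obstacle. Moreover, the route you sketch---a term-wise factorization of the error summand $2^{pi}a_i^{-sp/n}(a_i-a_{i+1})$ in which one factor is built from $2^{pi}a_i^{-sp/n}a_{i+1}$---cannot work: at the largest index $i$ with $a_i\neq 0$ one has $a_{i+1}=0$ while that error summand equals the full term $2^{pi}a_i^{(n-sp)/n}>0$, so any factor carrying a positive power of $a_{i+1}$ vanishes there and no term-wise identity or inequality of the proposed shape can hold (the issue at such indices is degeneration to zero, not the blow-up you mention). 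The assertion that such an absorption would ``yield the claimed constant'' $2^{p^{\ast}_s}$ is also unsupported.

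The actual short argument of \cite[Lemma~6.2]{HitchhikersG}---which is what the paper invokes, since it gives no proof of its own---applies H\"older globally to a shifted copy of $T$, not to an error term. For every $i$ with $a_i\neq 0$ one has the exact identity
\[
2^{pi}a_{i+1}^{(n-sp)/n}=\Bigl(2^{pi}a_i^{(n-sp)/n}\Bigr)^{\frac{sp}{n}}\Bigl(2^{pi}a_i^{-sp/n}a_{i+1}\Bigr)^{\frac{n-sp}{n}},
\]
and since $a_i=0$ forces $a_{i+1}=0$ (monotonicity and non-negativity), summing over these $i$, applying H\"older with exponents $n/(sp)$ and $n/(n-sp)$, and reindexing $j=i+1$ gives $2^{-p}\,T\leq T^{sp/n}S^{(n-sp)/n}$. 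Because $\{a_i\}$ is bounded and vanishes for $i\geq N$, $T$ is finite, so one may divide by $T^{sp/n}$ (the case $T=0$ being trivial) and conclude $T\leq 2^{pn/(n-sp)}S=2^{p^{\ast}_s}S$. The index shift combined with this division-by-$T^{sp/n}$ absorption is precisely the idea missing from your proposal; without it, the boundary-index obstruction you correctly identified remains unresolved.
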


Note that, from the hypotheses made on the sequence $\{a_{i}\}$ in the lemma, clearly both series are convergent.
The same happens for the series in the following inequality, which is taken from the proof of~\cite[Lemma 6.3]{HitchhikersG} and that we will use later:
\begin{equation}
 \label{eq:ineq for S}
\sum_{\substack{ i\in \Z \\ a_{i-1}\neq 0}} 2^{pi}a_{i-1}^{-s p/n}a_{i+1} \leq  
\frac{1}{2}\sum_{\substack{ i\in \Z \\ a_{i-1}\neq 0}} 2^{pi}a_{i-1}^{-s p/n}a_i .
\end{equation}
Its proof is simple:
$$
  \begin{aligned}
\sum_{\substack{ i\in \Z \\ a_{i-1}\neq 0}} 2^{pi}a_{i-1}^{-s p/n}a_{i+1} &=
\sum_{\substack{ i\in \Z \\ a_{i-1}\neq 0, a_{i+1}\neq 0}} 2^{pi}a_{i-1}^{-s p/n}a_{i+1} =\sum_{\substack{ i\in \Z \\ a_{i}\neq 0}} 2^{pi}a_{i-1}^{-s p/n}a_{i+1}  \\
&\leq  
\sum_{\substack{ i\in \Z \\ a_{i}\neq 0}} 2^{pi}a_{i}^{-s p/n}a_{i+1} =  
\frac{1}{2^{p}}\sum_{\substack{ j\in \Z \\ a_{j-1}\neq 0}} 2^{pj}a_{j-1}^{-s p/n}a_{j}
\leq \frac{1}{2}\sum_{\substack{ j\in \Z \\ a_{j-1}\neq 0}} 2^{pj}a_{j-1}^{-s p/n}a_{j}.
  \end{aligned}
  $$

The next lemma is the core of the proof and the analogue of~\cite[Lemma 6.3]{HitchhikersG}.

\begin{lemma}
\label{Analogue Lemma 6.3 in HG} Let~$s\in (0,1),$~$p \ge 1$ such that~$n > s p$, and~$\Omega\subset\R^{n+1}$ be an open convex set. Let~${u}\in L^{\infty}({\delomega})$ be a non-negative function with compact support. 
Then, 
\begin{align*}
 \frac{1}{2}\int_{{\delomega}}\int_{{\delomega}}\frac{|{u}(x)-{u}(y)|^p}{|x-y|^{n+s p}}\, {dx}{dy}
 +\int_{{\delomega}} H_{\alpha}(x)^{\frac{s p}{\alpha}} {u}(x)^p \, {dx} \ge c \sum_{\substack{i\in\Z \\ a_{i-1}\neq 0}}2^{pi}a_{i-1}^{-s p/n}a_i \,,
\end{align*}
for some constant~$c>0$ depending only on~$n$,~$\alpha$,~$s$, and~$p$.
\end{lemma}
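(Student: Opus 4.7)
The natural approach is to apply the pointwise inequality of Proposition~\ref{pointwiseconvexMSprop} to the dyadic superlevel sets~$A_{i-1}=\{u>2^{i-1}\}$, integrate over~$A_i\subset A_{i-1}$, and then sum against the weight~$2^{pi}$. Since~$u$ is bounded and compactly supported, each~$A_i$ has finite measure, and so taking $E=A_{i-1}$ in~\eqref{pointwiseconvexMSine} gives, for a.e.~$x\in A_{i-1}$,
$$
a_{i-1}^{-sp/n}\leq C\left(\int_{\partial\Omega\setminus A_{i-1}}\frac{dy}{|x-y|^{n+sp}}+H_{\alpha}(x)^{\frac{sp}{\alpha}}\right).
$$
Integrating over~$x\in A_i$ and multiplying by~$2^{pi}$, then summing over those~$i\in\Z$ with~$a_{i-1}\neq 0$, the task reduces to bounding two pieces: a zeroth-order part involving~$H_{\alpha}$, and a Gagliardo-type part.

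\textbf{Zeroth-order term.} After interchanging the sum and the integral, I need to estimate, for a.e.~fixed~$x$, the quantity~$\sum_{i}2^{pi}\chi_{A_i}(x)=\sum_{2^i<u(x)}2^{pi}$. Since this is a geometric series with ratio~$2^p>1$ truncated at index~$\lfloor\log_2 u(x)\rfloor$, it is bounded by~$\frac{2^p}{2^p-1}u(x)^p$. Consequently,
$$
\sum_{i}2^{pi}\int_{A_i}H_{\alpha}(x)^{\frac{sp}{\alpha}}\,dx\leq C(p)\int_{\partial\Omega}H_{\alpha}(x)^{\frac{sp}{\alpha}}u(x)^p\,dx,
$$
which is precisely the zeroth-order term on the left-hand side of the claim.

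\textbf{Gagliardo-type term.} After swapping sum and integration, I must prove that
$$
N^{\ast}(x,y):=\sum_{i\in\Z}2^{pi}\,\chi_{A_i}(x)\,\chi_{\partial\Omega\setminus A_{i-1}}(y)\leq C(p)\,|u(x)-u(y)|^p
$$
for a.e.~$(x,y)\in\partial\Omega\times\partial\Omega$. The key observation is that the conditions~$x\in A_i$ and~$y\notin A_{i-1}$ require~$u(x)>2^i$ and~$u(y)\leq 2^{i-1}$, in particular~$2^i<u(x)$ and~$2^i\geq 2u(y)$. Hence the sum is empty whenever~$u(x)\leq 2u(y)$ (and in particular whenever~$u(x)\leq u(y)$). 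When~$u(x)>2u(y)$, on the one hand the geometric sum over all admissible~$i$ is at most~$\frac{2^p}{2^p-1}u(x)^p$; on the other hand the condition~$u(x)>2u(y)$ forces~$u(x)-u(y)>u(x)/2$, so~$u(x)^p\leq 2^p|u(x)-u(y)|^p$. Combining these estimates yields the claim and thus
$$
\sum_{i}2^{pi}\int_{A_i}\int_{\partial\Omega\setminus A_{i-1}}\frac{dy\,dx}{|x-y|^{n+sp}}\leq C\int_{\partial\Omega}\int_{\partial\Omega}\frac{|u(x)-u(y)|^p}{|x-y|^{n+sp}}\,dx\,dy.
$$

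\textbf{Main obstacle.} The delicate point is the bound on~$N^{\ast}(x,y)$: a naive count of the admissible indices would produce an unbounded~$\log_2(u(x)/u(y))$ factor. The decisive gain is the geometric dichotomy above, which ensures that the sum is nonzero only when~$u(y)$ is bounded away from~$u(x)$ by a factor of~$2$, thereby converting the upper bound~$u(x)^p$ into the desired~$|u(x)-u(y)|^p$. Putting the two estimates together completes the lemma; convergence of all series is automatic because~$u$ is bounded with compact support, so~$A_i=\emptyset$ for all large~$i$ and~$a_i$ is bounded uniformly for small~$i$, making~$2^{pi}$ the dominant factor at both ends.
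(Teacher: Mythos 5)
Your proof is correct, but it organizes the argument differently from the paper, so it is worth comparing the two. The common core is the same: both apply the pointwise inequality \eqref{pointwiseconvexMSine} of Proposition~\ref{pointwiseconvexMSprop} with $E=A_{i-1}$ (legitimate, since $a_{i-1}>0$ and $|A_{i-1}|<+\infty$ thanks to the compact support of $u$ and the local finiteness of $\Haus^n$ on $\delomega$). The paper then works on the disjoint dyadic shells $D_i=A_i\setminus A_{i+1}$: it lower-bounds $\sum_{j\le i-2}\int_{D_i}\int_{D_j}$ by $2^{p(i-1)}$ times the tail integral, which after summation produces the quantity $\sum 2^{pi}a_{i-1}^{-sp/n}(a_i-a_{i+1})$ and therefore requires the extra series manipulation \eqref{eq:ineq for S} to recover $a_i$ in place of $d_i$, plus the symmetry/disjointness step \eqref{eq:bigger 2Q} to compare with $\tfrac12[u]^p_{W^{s,p}}$. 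You instead integrate the pointwise inequality directly over $A_i$ (not $D_i$), which immediately yields the factor $a_i$ and makes \eqref{eq:ineq for S} unnecessary; the price is that the resulting regions $A_i\times(\delomega\setminus A_{i-1})$ overlap as $i$ varies, so you must control the multiplicity, which you do correctly via the counting function $N^\ast(x,y)$: the admissible indices satisfy $2u(y)\le 2^i<u(x)$, the geometric sum is at most $\tfrac{2^p}{2^p-1}u(x)^p$, and nonemptiness forces $u(x)<2(u(x)-u(y))$, giving $N^\ast(x,y)\le \tfrac{2^{2p}}{2^p-1}|u(x)-u(y)|^p$; the analogous geometric-series bound $\sum_{2^i<u(x)}2^{pi}\le \tfrac{2^p}{2^p-1}u(x)^p$ handles the $H_\alpha$ term, where the paper instead uses $u(x)^p>2^{p(i-1)}$ on $D_i$ together with disjointness. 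All interchanges of sum and integral are justified by Tonelli since every term is non-negative, and the factor $\tfrac12$ in the statement is absorbed into the constant. In short, your route trades the paper's shell decomposition and its auxiliary summation inequality for a pointwise multiplicity bound; both yield a constant depending only on $n$, $\alpha$, $s$, $p$, and yours is arguably slightly more self-contained for this lemma.
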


\begin{proof}
Throughout the proof, we will use the notation~$D_{-\infty} := \{ u = 0 \}$. Moreover, for any~$k \in \Z$, we write~$j \le k$ to indicate that~$j$ is either an integer smaller than or equal to~$k$ or that~$j = -\infty$.

Let~$i\in \Z$ and~$x\in D_i$. For every~$j\leq i-2$ and~$y\in D_j$ we have that~${u}(x)-{u}(y) \ge 2^{i-1}$ and therefore
\begin{equation}
 \label{eq:fx minus fy by 2i}
  \begin{aligned}
  \sum_{j\leq i-2}\int_{D_j}\frac{|{u}(x)-{u}(y)|^p}{|x-y|^{n+s p}} \, {dy}
  & \geq 2^{p(i-1)}
  \sum_{j\leq i-2}\int_{D_j}\frac{dy}{|x-y|^{n+s p}} \\
  & =  2^{p(i-1)}\int_{\partial \Omega \setminus A_{i-1}}\frac{dy}{|x-y|^{n+ s p}}.
\end{aligned}
\end{equation}
Suppose now that~$A_{i-1}$ has positive measure. From Proposition~\ref{pointwiseconvexMSprop}, we have that
$$
  a_{i-1}^{-s p/n}\leq C\left(\int_{\delomega \setminus A_{i-1}}\frac{dy}{|x-y|^{n+ s p}} + H_{\alpha}(x)^{\frac{s p}{\alpha}} \right)
$$
for a.e.~$x\in A_{i-1}$. As a consequence, using~\eqref{eq:fx minus fy by 2i},
$$
  \sum_{j\leq i-2} \int_{D_j}\frac{|{u}(x)-{u}(y)|^p}{|x-y|^{n+s p}} \, {dy}
  \geq  2^{p(i-1)}\left(\frac{a_{i-1}^{-s p/n}}{C}- H_{\alpha}(x)^{\frac{sp}{\alpha}} \right)
$$
for a.e.~$x\in D_i\subset A_{i-1}$. Integrating over~$D_i$, this gives that
\begin{equation*}
 \label{eq:6.15 in HG}
  \begin{aligned}
& \sum_{j\leq i-2}\int_{D_i}\int_{D_j}\frac{|{u}(x)-{u}(y)|^p}{|x-y|^{n+s p}} \, {dy}{dx}+2^{p(i-1)}\int_{D_i} H_{\alpha}(x)^{\frac{s p}{\alpha}} \, {dx} \\
& \hspace{70pt} \ge \frac{2^{pi}a_{i-1}^{-s p/n}d_i}{2^p C} =  \frac{2^{pi}a_{i-1}^{-s p/n} (a_i-a_{i+1}) }{2^p C} .
 \end{aligned}
 \end{equation*}
We now take the sum over all~$i\in \Z$ such that~$a_{i-1}\neq 0$ and use \eqref{eq:ineq for S} to deduce
\begin{equation}
 \label{eq:2Q bigger}
  \begin{aligned}
  &\sum_{\substack{i\in \Z \\ a_{i-1}\neq 0}}
  \sum_{j\leq i-2}\int_{D_i}\int_{D_j}\frac{|{u}(x)-{u}(y)|^p}{|x-y|^{n+s p}} \, {dy}{dx}+\sum_{\substack{i\in \Z \\ a_{i-1}\neq 0}}2^{p(i-1)}\int_{D_i} H_{\alpha}(x)^{\frac{sp}{\alpha}} \, {dx}  \\
  & \hspace{70pt} \geq\frac{1}{2^p C} \,  \frac{1}{2} \sum_{\substack{i\in \Z \\ a_{i-1}\neq 0}}2^{pi}a_{i-1}^{-s  p/n} a_i\,.
   \end{aligned}
\end{equation}

Now, by symmetry
\begin{align} \label{eq:bigger 2Q}
  \frac{1}{2}\int_{{\delomega}}\int_{{\delomega}}\frac{|{u}(x)-{u}(y)|^p}{|x-y|^{n+ s p}} \, {dx}{dy}
  & \geq \sum_{i \in \Z} \sum_{j \le i - 1}
  \int_{D_i}\int_{D_j}\frac{|{u}(x)-{u}(y)|^p}{|x-y|^{n+ s p}} \, {dy}{dx}\nonumber
   \\
  & \geq \sum_{\substack{ i\in \Z \\ a_{i-1}\neq 0}}\sum_{j\leq i-2}\int_{D_i}\int_{D_j}\frac{|{u}(x)-{u}(y)|^p}{|x-y|^{n+s p}} \, {dy}{dx}.
\end{align}
Since~${u}(x)^p>2^{pi}>2^{p(i-1)}$ for $x\in D_i$, we have
\begin{align*}
 \int_{D_i}H_{\alpha}(x)^{\frac{s p}{\alpha}} {u}(x)^p \, {dx}
 \geq
  2^{p (i-1)}\int_{D_i} H_{\alpha}(x)^{\frac{s p}{\alpha}} \, {dx}.
\end{align*}
Thus,
\begin{align*}
  \int_{{\delomega}} H_{\alpha}(x)^{\frac{s p}{\alpha}}{u}(x)^p \, {dx} 
  \geq
  \sum_{\substack{i\in \Z \\ a_{i-1}\neq 0}}\int_{D_i} H_{\alpha}(x)^{\frac{s p}{\alpha}}{u}(x)^p \, {dx} 
  \ge 
  \sum_{\substack{i\in \Z \\ a_{i-1}\neq 0}}2^{p(i-1)}\int_{D_i} H_{\alpha}(x)^{\frac{s p}{\alpha}} \, {dx}.
\end{align*}
It now follows from this,~\eqref{eq:bigger 2Q}, and~\eqref{eq:2Q bigger} that
$$
  \frac{1}{2}\int_{{\delomega}}\int_{{\delomega}}\frac{|{u}(x)-{u}(y)|^p}{|x-y|^{n+ s p}} \, {dx}{dy}+\int_{{\delomega}} H_{\alpha}(x)^{\frac{s p}{\alpha}} {u}(x)^p \, {dx} \geq \frac{1}{2^{p+1} C}\sum_{\substack{ i\in \Z \\ a_{i-1}\neq 0}}2^{pi}a_{i-1}^{- s p/n}a_i\,,
$$
which concludes the proof of the lemma.
\end{proof}

\begin{proof}[Proof of Theorem~\ref{thm:intro:Convex MS for functions}.]
As for the proof in the case~$p = 1$ presented previously, we may assume~$u$ to be non-negative. Using truncations, we can also take~${u}$ to be bounded. In addition, we suppose for the moment that~$u$ has compact support. We will show at the end of the proof that this hypothesis can be removed.

Under these assumptions, we have
$$
  \|{u}\|_{L^{p^{\ast}_s}({\delomega})}^{p^{\ast}_s}= \sum_{i\in \Z}\int_{D_i}{u}(x)^{p^{\ast}_s} \, {dx}
  \leq \sum_{i\in \Z}\int_{D_i}\left(2^{i+1}\right)^{p^{\ast}_s} {dx} \leq \sum_{i\in \Z}2^{p^{\ast}_s(i+1)} a_i.
$$
From this and the elementary inequality~$\left(\sum_{i} m_i\right)^{\lambda}\leq \sum_{i} m_i^{\lambda}$ for every sequence~$m_i\geq 0$ and~$\lambda\in [0,1],$ taking here~$ \lambda := p/p^{\ast}_s=(n- s p)/n \in (0,1)$, one concludes that
$$
  \|{u}\|_{L^{p^{\ast}_s}({\delomega})}^p\leq 2^p\left(\sum_{i\in \Z}2^{p^{\ast}_s i} a_i\right)^{p/p^{\ast}_s}\leq 2^p\sum_{i\in \Z}2^{pi}a_i^{(n- s p)/n}.
$$
Using now Lemmas~\ref{lemma:ineq of sums ak and akplus1} and~\ref{Analogue Lemma 6.3 in HG} we get
\begin{align*}
  \|{u}\|_{L^{p^{\ast}_s}({\delomega})}^p
  & \leq
  2^{p+p^{\ast}_s}\sum_{\substack{i\in \Z \\ a_i\neq 0}}2^{pi}a_i^{- s p/n} a_{i+1}
  =2^{p^{\ast}_s}\sum_{\substack{i\in \Z \\ a_{i-1}\neq 0}} 2^{pi} a_{i-1}^{- s p/n} a_i 
  \\
  & \leq C \left(\frac{1}{2}\int_{{\delomega}}\int_{{\delomega}}\frac{|{u}(x)-{u}(y)|^p}{|x-y|^{n+ s p}} \, {dx}{dy}
 +\int_{{\delomega}} H_{\alpha}(x)^{\frac{s p}{\alpha}} {u}(x)^p \, {dx}\right),
\end{align*}
which proves the theorem under the assumption that~$u$ has compact support.

We now show that the compactness of~$\supp(u)$ is not needed. Let~$R \ge 1$ and consider a cutoff function~$\eta \in C^\infty_c(\R^{n + 1})$ satisfying~$0 \le \eta \le 1$ in~$\R^{n + 1}$,~$\eta = 1$ in~$B_R$,~$\supp(\eta) \subset B_{2 R}$, and~$|\nabla \eta| \le 2/R.$ Given~$u \in W^{s, p}(\delomega)$, we define~$v := \eta u$. By the inequality that we have just proved and since~$v$ has compact support, we have that
\begin{equation} \label{sobandcutoff}
\begin{aligned}
  \|u\|_{L^{p^{\ast}_s}(\delomega \cap B_R)}^p & \le \| v \|_{L^{p^{\ast}_s}(\delomega)}^p 
  \\
  & 
  \le C\left( \frac{1}{2}\int_{\delomega}\int_{\delomega} \frac{|v(x)-v(y)|^p}{|x-y|^{n+ s p}} \, dx dy +\int_{\delomega}H_{\alpha}(x)^{\frac{s p}{\alpha}} |v(x)|^p \, dx \right)
  \\
  & \le C \, \bigg(\frac{1}{2} \int_{\delomega}\int_{\delomega} \frac{|u(x)-u(y)|^p}{|x-y|^{n+ s p}} \, dx dy +\int_{\delomega}H_{\alpha}(x)^{\frac{s p}{\alpha}} |u(x)|^p \, dx 
  \\
  & \quad\, + \int_{\delomega} |u(x)|^p \left( \int_{\delomega} \frac{|\eta(x) - \eta(y)|^p}{|x - y|^{n + s p}} \, dy \right) dx \bigg),
\end{aligned}
\end{equation}
where for the last inequality we used that
$$
|v(x)-v(y)|^p \le 2^{p - 1} \Big( |u(x) - u(y)|^p + |u(x)|^p |\eta(x) - \eta(y)|^p \Big) \quad \mbox{for a.e.~} x, y \in \delomega.
$$
To control the last term in~\eqref{sobandcutoff} we adapt some techniques from~\cite[Subsection~3.2]{Cabre-Cozzi}. First, using the Lipschitz property of~$\eta$ we have
\begin{equation} \label{sobandcutoff2}
   \int_{\delomega} \frac{|\eta(x) - \eta(y)|^p}{|x - y|^{n + s p}} \, dy \le \frac{2^p}{R^p} \int_{\delomega \cap B_R(x)} \frac{dy}{|x - y|^{n - (1 - s) p}} + \int_{\delomega \setminus B_R(x)} \frac{dy}{|x - y|^{n + s p}}.
\end{equation}
To estimate the second term on the right, we argue similarly to~\cite[Lemma~3.3]{Cabre-Cozzi}. Taking advantage of the perimeter estimate~\eqref{eq:perestcor}, we deduce
\begin{align*}
\int_{\delomega \setminus B_R(x)} \frac{dy}{|x - y|^{n + s p}} & = \sum_{j = 1}^{+\infty} \int_{\delomega \cap \left( B_{2^j \! R}(x) \setminus B_{2^{j - 1} \! R}(x) \right)} \frac{dy}{|x - y|^{n + s p}} \le \sum_{j = 1}^{+\infty} \frac{|\delomega \cap B_{2^j \! R}(x)|}{\left( 2^{j - 1} \! R \right)^{n + s p}} \\
& \le \frac{2^{n + s p} |\mathbb{S}^n|}{R^{s p}} \sum_{j = 1}^{+\infty} 2^{- sp j} \le \frac{C}{R^{s p}}.
\end{align*}
As the first term on the right-hand side of~\eqref{sobandcutoff2} can be dealt with using~\cite[Lemma~3.4]{Cabre-Cozzi}---observe that hypothesis~(3.3) of~\cite{Cabre-Cozzi} is fulfilled thanks to our~\eqref{eq:perestcor}---we infer that
$$
\int_{\delomega} \frac{|\eta(x) - \eta(y)|^p}{|x - y|^{n + s p}} \, dy \le \frac{C}{R^{s p}}.
$$
By plugging this into~\eqref{sobandcutoff} and letting~$R \rightarrow +\infty$, we conclude that~$u$ satisfies~\eqref{eq:intro:sAMSonconvex}. The proof is thus complete.
\end{proof}

\section{Application to the fractional mean curvature flow}
\label{section:fractional mean curv flow}

\noindent
In this section we study the evolution of convex sets under fractional mean curvature flow. Using the pointwise inequality~\eqref{eq:old pointwise aleks Fench} in conjunction with the classical Michael-Simon inequality, we provide an upper bound for the maximal time of existence for the smooth fractional mean curvature flow of convex hypersurfaces. Namely, we prove Theorem~\ref{thm:extinction time}. As in the classical local case, the argument is simple, once the appropriate Michael-Simon type inequality is known.

We denote by~$\Omega_0 \subset \R^{n + 1}$ a bounded open convex set with~$C^2$ boundary, and by~$\Omega_t$ its evolution by fractional~$\alpha$-mean curvature flow. That is, the inner normal velocity is, at every point, the fractional~$\alpha$-mean curvature. The unit outer normal to~$\Omega_t$ is denoted by~$\nu_t$ and we take the mean curvature~$H$ of~$\Omega$ (i.e., the sum of its principal curvatures) with the sign convention to be non-negative for convex sets.

As in~\eqref{max-time}, we consider
$$
T^\ast := \sup \left\{ t > 0 : \Omega_\tau \mbox{ is non-empty and has~$C^2$ boundary for all~} \tau\in [0,t) \right\}.
$$
In view of the results of~\cite{JulinLaManna},~$\Omega_\tau$ has boundary of class~$C^2$---actually,~$C^\infty$---for every small~$\tau$. Hence,~$T^\ast > 0$. On the other hand, through comparison with shrinking balls in~\cite{Saez Valdinoci} it is proved that~$T^\ast \le C \diam(\Omega_0)^{1 + \alpha}$ for some constant~$C$ depending only on~$n$ and~$\alpha$. These two results hold regardless of the convexity of~$\Omega_0$. Here, we show that, when~$\Omega_0$ is convex, the bound on~$T^\ast$ can be improved to~\eqref{eq:boundforTast}.

First, we recall a general first variation formula. In our situation, we will apply it with~$\varphi_t = - H_\alpha[\Omega_t]$. Note that, throughout this section, we emphasize the dependence of the classical an fractional mean curvatures on the set~$\Omega$ by writing~$H(x) = H[\Omega](x)$ and~$H_\alpha(x) = H_\alpha[\Omega](x)$ for~$x \in \partial \Omega$.

\begin{lemma}[See, e.g.,~{\cite[Remark~4.2]{Ecker}} or~{\cite[Proposition~4]{Guan-Li}}]
\label{Lemma:first variation mean curv flow}
Let~$\Omega_t\subset\R^{n+1}$ be a one-parameter family of open sets with~$C^2$ boundary and with~$|\partial \Omega_t|<+\infty$ for all $t \in (-a, a)$ and some~$a > 0$. Assume that, corresponding to each point~$p_0\in \partial \Omega_0$, there is a differentiable curve~$t\mapsto p(t)$ with~$p(0)=p_0$,~$p(t) \in \partial \Omega_t$ for all $t \in (-a, a)$, and satisfying
$$
  \frac{d}{dt} \, p(t)= \varphi_t(p(t)) \, \nu_t(p(t)) \quad \text{for all } t \in (-a, a),
$$
for some continuous function~$\varphi_t: \delomega_t\to \R$.

Then,
$$
  \frac{d}{dt} |\delomega_t|=\int_{\delomega_t} \varphi_t \, H[\Omega_t] \, {dx}.
$$ 
\end{lemma}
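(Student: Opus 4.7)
My plan is to reduce the identity to the classical first variation of area, by promoting the family of individual curves $t\mapsto p(t)$ into a one-parameter family of deformation maps between the hypersurfaces. Fix $t_{0}\in(-a,a)$. Using the curves starting at the points of $\delomega_{t_{0}}$, one obtains a family of maps $\Phi_{\tau}\colon\delomega_{t_{0}}\to\delomega_{\tau}$ with $\Phi_{t_{0}}=\mathrm{id}$ and with initial velocity
\[
  \left.\frac{\partial}{\partial\tau}\Phi_{\tau}(x)\right|_{\tau=t_{0}} = \varphi_{t_{0}}(x)\,\nu_{t_{0}}(x),\qquad x\in\delomega_{t_{0}}.
\]
By the area formula, $|\delomega_{\tau}|=\int_{\delomega_{t_{0}}}J\Phi_{\tau}(x)\,dx$, where $J\Phi_{\tau}$ denotes the tangential Jacobian of $\Phi_{\tau}$. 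Since we wish to obtain the statement at an arbitrary time, it will suffice to establish the formula at $t=t_{0}$.

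The next step is to differentiate in $\tau$ at $\tau=t_{0}$. The standard infinitesimal formula for the variation of the tangential Jacobian under a deformation with velocity field $X_{0}:=\varphi_{t_{0}}\nu_{t_{0}}$ gives
\[
  \left.\frac{\partial}{\partial\tau}J\Phi_{\tau}(x)\right|_{\tau=t_{0}} = \mathrm{div}_{\delomega_{t_{0}}}\!\bigl(\varphi_{t_{0}}\,\nu_{t_{0}}\bigr)(x).
\]
Since $\nabla_{\!\delomega_{t_{0}}}\varphi_{t_{0}}$ is tangential while $\nu_{t_{0}}$ is normal to $\delomega_{t_{0}}$, the product rule reduces this to
\[
  \mathrm{div}_{\delomega_{t_{0}}}\!\bigl(\varphi_{t_{0}}\nu_{t_{0}}\bigr) = \varphi_{t_{0}}\,\mathrm{div}_{\delomega_{t_{0}}}\!\nu_{t_{0}} = \varphi_{t_{0}}\,H[\Omega_{t_{0}}],
\]
where the last equality is the well-known identification of the tangential divergence of the outer unit normal with the mean curvature under our sign convention (checked on a sphere of radius $R$, where $\nu(x)=x/R$ gives $\mathrm{div}_{M}\nu=n/R=H$). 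Exchanging differentiation and integration yields $\frac{d}{dt}|\delomega_{t}|\big|_{t=t_{0}}=\int_{\delomega_{t_{0}}}\varphi_{t_{0}}H[\Omega_{t_{0}}]\,dx$, and since $t_{0}\in(-a,a)$ was arbitrary, the lemma follows.

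The main obstacle is not algebraic but analytic: one must justify (i) that the pointwise assignment $p_{0}\mapsto p(\cdot)$ assembles into a one-parameter flow $\Phi_{\tau}$ regular enough that $J\Phi_{\tau}$ is differentiable in $\tau$, and (ii) that one may differentiate under the integral over $\delomega_{t_{0}}$, which is guaranteed only to have finite $\Haus^{n}$-measure but may fail to be compact. Both points are handled by working in local $C^{2}$ charts of $\delomega_{t_{0}}$: the $C^{2}$ spatial regularity of $\delomega_{\tau}$, together with the continuity of $\varphi_{\tau}$ and $\nu_{\tau}$, yields a uniform-in-$\tau$ bound on $|\partial_{\tau}J\Phi_{\tau}|$ on a small time interval around $t_{0}$; alternatively, one exhausts $\delomega_{t_{0}}$ by compact pieces, applies the argument above on each piece, and passes to the limit using the finiteness $|\delomega_{t_{0}}|<+\infty$ together with dominated convergence. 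Since the assertion is otherwise the standard first variation of area, we could also simply invoke the references cited in the statement, where these regularity aspects are treated in detail.
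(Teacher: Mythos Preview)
The paper does not give its own proof of this lemma: it merely states the result and cites \cite[Remark~4.2]{Ecker} and \cite[Proposition~4]{Guan-Li}. Your sketch is precisely the standard first variation of area computation that those references carry out, so there is nothing to compare---your argument is correct and is essentially what one finds in the cited sources.
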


We can now give the

\begin{proof}[Proof of Theorem~\ref{thm:extinction time}.]
Recall that~$\Omega_t$ remains convex, thanks to~\cite{Chambolle Novaga Ruffini}. Using Lemma~\ref{Lemma:first variation mean curv flow} we see that
\begin{equation}
 \label{eq:decreasing sigma area}
  \frac{d}{dt} |{\delomega}_t|=-\int_{{\delomega}_t} {\HalphaOmegat} H[\Omega_t] \, {dx}.
\end{equation}
By inequality~\eqref{eq:old pointwise aleks Fench} proved in Section~\ref{section:frac Aleks Fench}, we know that
$$
  |{\delomega}_t|^{-\frac{\alpha}{n}}\leq C_1 {\HalphaOmegat}(x)\quad\text{ for all }x\in {\delomega}_t,
$$
for some constant~$C_1>0$ depending only on~$n$ and~$\alpha$. Multiplying this inequality by~$H[\Omega_t](x)$ and integrating in~$x \in \delomega_t$, we get
\begin{equation}
 \label{eq:using Prop 9 for Sigmat}
  |{\delomega}_t|^{-\frac{\alpha}{n}}\int_{{\delomega}_t}H[\Omega_t] \, {dx}
  \leq C_1 \int_{{\delomega}_t}{\HalphaOmegat} H[\Omega_t] \, {dx}.
\end{equation}
We now use the the classical Michael-Simon inequality (Theorem~\ref{thm:Michael-Simon Allard}) with~$u \equiv 1 = p$ if~$n\geq 2$, or the Gauss-Bonnet formula for curves: $2\pi=\int_{\partial\Omega_t}H[\Omega_t](x) \, dx$ if~$n=1.$ Either way, we have that
\begin{equation}
 \label{eq:using classic MS for Sigmat}
  |{\delomega}_t|^{\frac{n-1}{n}}\leq C_2
  \int_{{\delomega}_t}H[\Omega_t] \, {dx}
\end{equation}
for some constant~$C_2>0$ depending only on~$n$.

Finally, using~\eqref{eq:using classic MS for Sigmat},~\eqref{eq:using Prop 9 for Sigmat}, and~\eqref{eq:decreasing sigma area}, we deduce that
\begin{align*}
  |{\delomega}_t|^{\frac{n-(1+\alpha)}{n}}
  & = |{\delomega}_t|^{\frac{n-1}{n}}|{\delomega}_t|^{-\frac{\alpha}{n}}
  \leq C_2\,|{\delomega}_t|^{-\frac{\alpha}{n}}\int_{{\delomega}_t}H[\Omega_t] \, {dx}
  \\
  & \le C_1C_2 \int_{{\delomega}_t}{\HalphaOmegat}H[\Omega_t] \, {dx}
  =-C_1 C_2 \, \frac{d}{dt} |{\delomega}_t|.
\end{align*}
That is,~$\frac{d}{dt} |\delomega_t|^{\frac{1 + \alpha}{n}} \le - \delta$, for some constant~$\delta > 0$ depending only on~$n$ and~$\alpha$. By integrating this relation, we obtain that~$|{\delomega}_t|^{\frac{1+\alpha}{n}}\leq |{\delomega}_0|^{\frac{1+\alpha}{n}}- \delta t$. This shows that the maximal time of existence must satisfy~$T^\ast \le \delta^{-1} |{\delomega}_0|^{\frac{1+\alpha}{n}}$, as claimed by the theorem.
\end{proof}

\appendix
\section{Proof of the Rosenthal-Sz\'asz type inequality}

\label{appendix:rosenthal-szasz}

\noindent
In this section, we denote by~$B_1^n$ the open unit ball of~$\R^n$ centered at the origin, that is~$B_1^n := \{ x\in\R^n:\, |x|<1\}$. Here we give a proof of the first inequality in Proposition~\ref{RSineprop} (the isodiametric inequality for perimeter), which states that
\begin{equation}
 \label{appendix:RSzasz type ineq}
  |\partial\Omega|\leq |\mathbb{S}^n| \, \frac{\diam(\Omega)^n}{2^n}\quad \mbox{for every bounded convex set } \Omega\subset\R^{n+1}.
\end{equation}

Observe that the inequality is optimal, i.e.,~there is equality for balls. This inequality was first proved by Rosenthal and Sz\'asz~\cite{Rosenthal Szasz} in the plane. The version in higher dimensions can be found in Section~44 of~\cite{BF87} as inequality~(6). The proof however is scattered over several sections of~\cite{BF87}, of which many steps are in greater generality than what is actually needed to prove~\eqref{appendix:RSzasz type ineq}, making the proof unnecessarily long and  complicated if one is only interested in the Rosenthal-Sz\'asz inequality. We have not found a better reference and, thus, we present here a quick proof. It is based on two better-known results: Cauchy's surface area formula (Proposition \ref{proposition:SK by int sgima u} below) and the isodiametric inequality for volume. This last result---see, e.g., Theorem~1 in Section~2.2 of~\cite{Evans-Gariepy}---states that
\begin{equation}
\label{eq:isodiam ineq volume}
  |E| \leq |B_1^n|\, \frac{\diam(\Omega)^{n}}{2^{n}},
\end{equation}
where~$|\cdot|$ indicates the~$n$-dimensional Lebesgue measure and~$E\subset\R^n$ is any measurable set---here convexity is not needed. In~\cite{Evans-Gariepy} it is proved using Steiner symmetrizations. As for~\eqref{appendix:RSzasz type ineq}, in~\eqref{eq:isodiam ineq volume} equality is achieved for balls. Observe that the isodiametric inequality for perimeter does not hold in general if the convexity assumption is relaxed. Consider for example a domain with oscillating boundary---giving an arbitrary large perimeter---contained in a ball of a given diameter.

If one does not need the best constant in the Rosenthal-Sz\'asz inequality~\eqref{appendix:RSzasz type ineq}---as it is our case---, a weaker inequality follows more easily from the inclusion~$\Omega \subset \overline{B}_{\diam(\Omega)}(x)$, where~$x$ is any point in~$\overline{\Omega}$, and the monotonicity of the perimeter with respect to the inclusion of convex sets. This monotonicity property follows, for instance, from Cauchy's surface area formula, stated later in Proposition~\ref{proposition:SK by int sgima u}. Given our statement of this result, one also needs to approximate the convex set by polytopes, as we do in the proof of Proposition~\ref{RSineprop} below.

For the proof of \eqref{appendix:RSzasz type ineq} we need to introduce the notion of polytopes.
A bounded open set~$K\subset\R^{n+1}$ is called a polytope if its boundary~$\partial K$ is the finite union of sets~$P_i$, for~$i=1,\ldots,m_K$, with each~$P_i$ being contained in an~$n$-dimensional affine hyperplane. The~$P_i$'s are the~$n$-dimensional faces of~$K$. In this section~$K\subset\R^{n+1}$ always denotes a convex polytope. Now, given a unit vector~$\sigma\in \mathbb{S}^n$, let~$K_\sigma$ be the projection of~$K$ onto the hyperplane orthogonal to~$\sigma$. Obviously, we have
\begin{equation}
 \label{eq:sigma delta K by sum Pi}
  |\partial K|=\sum_{i=1}^{m_K}|P_i|.
\end{equation}
Denote by~$\xi_i$ a unit normal vector on~$P_i$. Note that projecting the~$n$-dimensional faces~$P_i$ onto the hyperplane orthogonal to~$\sigma$ and then taking the union over~$i$ also coincides with~$K_{\sigma}$. At the same time, by the convexity of~$K$, the preimage of a.e.~$x\in K_{\sigma}$ under this projection consists of exactly two points lying on two different faces. Thus, we obtain the identity
\begin{equation}
 \label{eq:sigmaK u by faces}
  2|K_{\sigma}|=\sum_{i=1}^{m_K}|P_i|\,|\langle\xi_i,\sigma\rangle|.
\end{equation}

We will use the following lemma. 

\begin{lemma}
\label{lemma:int uv over Sn}
Let~$\tau\in \mathbb{S}^n$ be a unit vector in~$\R^{n+1}.$
Then
$$
  \int_{\mathbb{S}^n}|\langle \sigma,\tau\rangle| \, d\sigma=2|B_1^n|.
$$
\end{lemma}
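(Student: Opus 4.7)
The plan is to exploit the rotational symmetry of the sphere to reduce to a single, convenient vector $\tau$, and then reinterpret the weight $|\langle \sigma,\tau\rangle|$ as precisely the Jacobian factor that makes the projection of each hemisphere onto the equatorial disk area-preserving.

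First, I would observe that the map $\sigma \mapsto |\langle \sigma, \tau\rangle|$ is invariant under the natural action of the orthogonal group sending $\tau$ to any other unit vector. Since the surface measure on $\mathbb{S}^n$ is rotation-invariant, the integral in question depends only on $|\tau|=1$ and not on the specific direction of $\tau$. Hence I may assume without loss of generality that $\tau = e_{n+1}$, so that the lemma reduces to showing
\begin{equation*}
\int_{\mathbb{S}^n}|\sigma_{n+1}|\,d\sigma = 2|B_1^n|.
\end{equation*}

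Next, I would split $\mathbb{S}^n$ into the two hemispheres $\mathbb{S}^n_{\pm}:=\{\sigma \in \mathbb{S}^n:\pm\sigma_{n+1}>0\}$, which have the same contribution to the integral by the symmetry $\sigma \mapsto -\sigma$. On the upper hemisphere I would use the graph parametrization $x \in B_1^n \mapsto \sigma(x) := (x,\sqrt{1-|x|^2})$. A direct computation of the induced $n$-dimensional Hausdorff measure gives
\begin{equation*}
d\sigma = \frac{dx}{\sqrt{1-|x|^2}} = \frac{dx}{|\sigma_{n+1}(x)|}.
\end{equation*}
Therefore $|\sigma_{n+1}|\,d\sigma = dx$ on $\mathbb{S}^n_{+}$, and integrating yields $\int_{\mathbb{S}^n_+}|\sigma_{n+1}|\,d\sigma = |B_1^n|$. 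Combining with the same identity on $\mathbb{S}^n_-$ gives $2|B_1^n|$, as desired.

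The only step that requires any real care is justifying the Jacobian formula $d\sigma = dx/\sqrt{1-|x|^2}$, but this is a standard computation: the induced metric on the graph has volume element $\sqrt{\det(I + \nabla f \otimes \nabla f)}\,dx$ with $f(x)=\sqrt{1-|x|^2}$, and the matrix-determinant lemma gives $\det(I + \nabla f\otimes\nabla f) = 1 + |\nabla f|^2 = 1/(1-|x|^2)$. Geometrically, this is the familiar fact that the vertical projection of $\mathbb{S}^n_+$ onto its equatorial disk contracts the area by exactly the cosine of the angle between the outward normal and $e_{n+1}$, which is $|\sigma_{n+1}|$. No obstacle beyond this routine calculation arises.
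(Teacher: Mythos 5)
Your proposal is correct and follows essentially the same route as the paper: reduce by rotation invariance to $\tau=e_{n+1}$, use the symmetry of the two hemispheres, and parametrize the upper hemisphere as a graph over $B_1^n$, where the Jacobian factor $1/\sqrt{1-|x|^2}$ exactly cancels the weight $|\sigma_{n+1}|=\sqrt{1-|x|^2}$. No gaps; the Jacobian computation you flag is precisely the one the paper carries out.
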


\begin{proof}
After a rotation, we can assume~$\tau=e_{n+1}=(0,\ldots,0,1).$ Using the parametrization~$\varphi:B_1^n\to \mathbb{S}^n,$ given by~$\varphi(x_1,\ldots,x_n)=\left(x_1,\ldots,x_n, \sqrt{1-|x|^2}\right)$, we see that
$$
  \int_{\mathbb{S}^n}|\langle \sigma,\tau\rangle| \, d\sigma
  = 2
  \int_{\mathbb{S}^n \cap \{ \sigma_{n + 1} > 0 \}} \sigma_{n+1} \, d\sigma=2\int_{B_1^n} \sqrt{1-|x|^2}\,\sqrt{1 + |\nabla \varphi^{n + 1}(x)|^2} \, dx.
$$
The claim follows as~$\sqrt{1 + |\nabla \varphi^{n + 1}(x)|^2} = 1/ \sqrt{1 - |x|^2}$ for every~$x \in B_1^n$.
\end{proof}

As a result of the previous considerations, we have the following identity for the perimeter of~$K$, which is known as Cauchy's formula (see for instance \cite[page 89]{Eggeleston Convexity}).

\begin{proposition}[Cauchy's surface area formula]
\label{proposition:SK by int sgima u}
Let~$K\subset\R^{n+1}$ be a convex polytope. Then, it holds
$$
  |\partial K|=\frac{1}{|B_1^n|}\int_{\mathbb{S}^n}|K_{\sigma}| \, d\sigma.
$$
\end{proposition}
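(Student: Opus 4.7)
The proof is essentially a direct combination of the two identities \eqref{eq:sigma delta K by sum Pi} and \eqref{eq:sigmaK u by faces} together with Lemma~\ref{lemma:int uv over Sn}. The plan is to integrate \eqref{eq:sigmaK u by faces} over $\sigma \in \mathbb{S}^n$ and to exploit the rotational invariance encoded in the lemma, which makes the integral $\int_{\mathbb{S}^n} |\langle \xi_i, \sigma \rangle| \, d\sigma$ independent of the particular unit normal $\xi_i$ of the face $P_i$.

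More precisely, I would start from the decomposition
$$
2 |K_\sigma| = \sum_{i=1}^{m_K} |P_i| \, |\langle \xi_i, \sigma \rangle|,
$$
which was established above \eqref{eq:sigmaK u by faces} by observing that, thanks to the convexity of $K$, almost every point of $K_\sigma$ has exactly two preimages under the orthogonal projection onto the hyperplane orthogonal to $\sigma$, and each of these lies on one of the faces $P_i$, contributing a factor of $|\langle \xi_i, \sigma \rangle|$ coming from the area element of the projection. Integrating both sides over $\sigma \in \mathbb{S}^n$ and swapping the (finite) sum with the integral yields
$$
2 \int_{\mathbb{S}^n} |K_\sigma| \, d\sigma = \sum_{i=1}^{m_K} |P_i| \int_{\mathbb{S}^n} |\langle \xi_i, \sigma \rangle| \, d\sigma.
$$

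By Lemma~\ref{lemma:int uv over Sn} applied with $\tau = \xi_i \in \mathbb{S}^n$, each of the inner integrals equals $2 |B_1^n|$, which is independent of $i$. Plugging this in and using \eqref{eq:sigma delta K by sum Pi} to recognize the resulting sum $\sum_{i=1}^{m_K} |P_i|$ as $|\partial K|$, one obtains
$$
2 \int_{\mathbb{S}^n} |K_\sigma| \, d\sigma = 2 |B_1^n| \sum_{i=1}^{m_K} |P_i| = 2 |B_1^n| \, |\partial K|,
$$
and dividing by $2 |B_1^n|$ gives the claimed identity. There is no real obstacle in the argument, since all the non-trivial ingredients---the two-sheeted nature of the projection encoded in \eqref{eq:sigmaK u by faces}, and the explicit value of the spherical integral of $\sigma \mapsto |\langle \sigma, \tau \rangle|$---are already in place; the proposition is essentially the observation that averaging \eqref{eq:sigmaK u by faces} over $\sigma$ decouples the normals from the geometry.
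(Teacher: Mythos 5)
Your proof is correct and follows exactly the paper's argument: integrate~\eqref{eq:sigmaK u by faces} over~$\sigma\in\mathbb{S}^n$, apply Lemma~\ref{lemma:int uv over Sn} with~$\tau=\xi_i$, and conclude via~\eqref{eq:sigma delta K by sum Pi}. Nothing is missing.
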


\begin{proof}
Integrate~\eqref{eq:sigmaK u by faces} with respect to~$\sigma$ over~$\mathbb{S}^n,$ then apply Lemma~\ref{lemma:int uv over Sn} (with~$\tau=\xi_i$), and finally use~\eqref{eq:sigma delta K by sum Pi}.
\end{proof}

We can finally give the

\begin{proof}[Proof of Proposition~\ref{RSineprop}.]
To prove~\eqref{appendix:RSzasz type ineq} we can assume by approximation that~$\Omega$ is a convex polytope~$K$ (see for instance \cite[Section 22]{Lay Convex sets} on approximations by polytopes). For any direction~$\sigma\in \mathbb{S}^n$ it follows from the isodiametric inequality for volume \eqref{eq:isodiam ineq volume} that
$$
  |K_{\sigma}| \leq |B_1^n| \, \frac{\diam (K_{\sigma})^n}{2^n}.
$$
Using now that~$\diam(K_{\sigma}) \le \diam(K)$ and Proposition~\ref{proposition:SK by int sgima u}, we get
$$
  |\partial K|\leq \left( \int_{\mathbb{S}^n}d\sigma \right) \frac{\diam(K)^n}{2^n}
  =|\mathbb{S}^n| \, \frac{\diam(K)^n}{2^n},
$$
and the proposition is proved.
\end{proof}


\begin{thebibliography}{99}


\bibitem{Aleksandrov 1} A.D. Aleksandrov, \emph{Zur Theorie der gemischten Volumina von konvexen K\"orpern, II, Neue
Ungleichungen zwischen den gemischten Volumina und ihre Anwendungen}, Mat. Sb. (N.S.) \textbf{2}
(1937), no.~6, 1205--1238.

\bibitem{Aleksandrov 2}
A.D. Aleksandrov, \emph{Zur Theorie der gemischten Volumina von konvexen K\"orpern, III, Die
Erweiterung zweeier Lehrsatze Minkowskis \"uber die konvexen Polyeder auf beliebige konvexe
Fl\"achen}, Mat. Sb. (N.S.) \textbf{3} (1938), no. 1, 27--46.

\bibitem{Allard} W.K. Allard, \emph{On the first variation of a varifold}, Ann. of Math. \textbf{95} (1972), no. 3, 417--491.

\bibitem{A55}
N. Aronszajn,
\emph{Boundary values of functions with finite Dirichlet integral},
Techn. Report \textbf{14}, Univ. of Kansas (1955), 77--94.

\bibitem{BDM}
E. Bombieri, E. De Giorgi, M. Miranda,
\emph{Una maggiorazione a priori relativa alle ipersuperfici minimali non parametriche},
Arch. Rational Mech. Anal. \textbf{32} (1969), 255--267.

\bibitem{BG73}
E. Bombieri, E. Giusti,
\emph{Local estimates for the gradient of non-parametric surfaces of prescribed mean curvature},
Comm. Pure Appl. Math. \textbf{26} (1973), 381--394.

\bibitem{BF87}
T. Bonnesen, W. Fenchel,
\emph{Theory of convex bodies},
BCS Associates, Moscow, ID, 1987.

\bibitem{Brendle} S. Brendle, \emph{The isoperimetric inequality for a minimal hypersurface in Euclidean space}, ArXiv preprint, arXiv:1907.09446, 2019.

\bibitem{brezis}
H. Brezis,
\emph{A quick proof of the fractional Sobolev inequality}, 2001,
unpublished, communicated by the author.

\bibitem{Cabre-DCDS}
X. Cabr\'e,
\emph{Elliptic PDE's in probability and geometry: symmetry and regularity of solution},
Discrete Contin. Dyn. Syst. \textbf{20} (2008), no. 3, 425--457.

\bibitem{Cabre-Cozzi}X. Cabr\'e, M. Cozzi, \emph{A gradient estimate for nonlocal minimal graphs}, Duke Math. J. \textbf{168} (2019), no. 5, 775--848.

\bibitem{Cabre-Miraglio} X. Cabr\'e, P. Miraglio, \emph{Universal Hardy-Sobolev inequalities on hypersurfaces of Euclidean space}, ArXiv preprint, arXiv:1912.09282, 2019.

\bibitem{Caffarelli Roquejoffre Savin} L. Caffarelli, J.-M. Roquejoffre, O. Savin, \emph{Nonlocal minimal surfaces}, Comm. Pure Appl. Math. \textbf{63} (2010), no. 9, 1111--1144.

\bibitem{Caffarelli Souganidis} L. Caffarelli, P. Souganidis, \emph{Convergence of nonlocal threshold dynamics approximations to front propagation}, Arch. Ration. Mech. Anal. \textbf{195} (2010), no. 1, 1--23.

\bibitem{CDNV19}
A. Cesaroni, S. Dipierro, M. Novaga, E. Valdinoci,
\emph{Fattening and nonfattening phenomena for planar nonlocal curvature flows},
Math. Ann. \textbf{375} (2019), no. 1-2, 687--736.

\bibitem{Chambolle Novaga Ruffini}
A. Chambolle, M. Novaga, B. Ruffini, \emph{Some results on anisotropic fractional mean curvature flows}, Interfaces Free Bound. \textbf{19} (2017), no. 3, 393--415.

\bibitem{Alice Chang}S.-Y. A. Chang and Y. Wang, \emph{On Aleksandrov-Fenchel Inequalities for k-Convex Domains}, Milan J. Math. \textbf{79} (2011), 13--38.

\bibitem{CintiSinestrariValdinoci}
E. Cinti, C. Sinestrari, E. Valdinoci,
\emph{Neckpinch singularities in fractional mean curvature flows},
Proc. Amer. Math. Soc. \textbf{146} (2018), no. 6, 2637--2646.

\bibitem{Dierkes}
U. Dierkes, S. Hildebrandt, A. Tromba, \emph{Global analysis of minimal surfaces}, Revised and enlarged second edition, Grundlehren der Mathematischen Wissenschaften [Fundamental Principles of Mathematical Sciences], 341, Springer, Heidelberg, 2010.

\bibitem{HitchhikersG} E. Di Nezza, G. Palatucci, E. Valdinoci, \emph{Hitchhiker's guide to the fractional Sobolev spaces}, Bull. Sci. Math. \textbf{136} (2012), no. 5, 521--573.

\bibitem{DILTV18}
B. Dyda, L. Ihnatsyeva, J. Lehrb{\"a}ck, H. Tuominen, A. V. V{\"a}h{\"a}kangas,
\emph{Muckenhoupt~$A_p$-properties of distance functions and applications to Hardy-Sobolev type inequalities},
Potential Anal. \textbf{50} (2019), no. 1, 83--105. 

\bibitem{Ecker}
K. Ecker,
\emph{Regularity theory for mean curvature flow},
Progress in Nonlinear Differential Equations and their Applications, 57. Birkh\"{a}user Boston, Inc., Boston, MA, 2004.

\bibitem{Eggeleston Convexity}
H. G. Eggleston, \emph{Convexity}, Cambridge Tracts in Mathematics and Mathematical Physics, No. 47 Cambridge University Press, New York, 1958. 

\bibitem{Evans CIME} L.C. Evans, \emph{Regularity for fully nonlinear elliptic equations and motion by mean curvature}, Lecture Notes in Math., 1660, Fond. CIME Subser., Springer, Berlin, 1997, 98--133.

\bibitem{Evans-Gariepy}
L.C. Evans, R. Gariepy,
\emph{Measure theory and fine properties of functions},
Studies in Advanced Mathematics, CRC Press, Boca Raton, FL, 1992.

\bibitem{Evans-Spruck}
L.C. Evans, J. Spruck, \emph{Motion of level sets by mean curvature III}, J. Geom. Anal. \textbf{2} (1992), no. 2, 121--150.

\bibitem{Folland PDE} G. Folland, 
\emph{Introduction to partial differential equations},
Second edition, Princeton University Press, Princeton, NJ, 1995.

\bibitem{G57}
E. Gagliardo,
\emph{Caratterizzazioni delle tracce sulla frontiera relative ad alcune classi di funzioni in~$n$ variabili},
Rend. Sem. Mat. Univ. Padova \textbf{27} (1957), 284--305. 

\bibitem{Guan-Li}
P. Guan, J. Li,
\emph{The quermassintegral inequalities for k-convex starshaped domains},
Adv. Math. \textbf{221} (2009), no. 5, 1725--1732. 

\bibitem{huisken}
G. Huisken,
\emph{Flow by mean curvature of convex surfaces into spheres},
J. Differential Geom. \textbf{20} (1984), no. 1, 237--266.

\bibitem{Imbert level set}
C. Imbert, \emph{Level set approach for fractional mean curvature flows}, Interfaces Free Bound. \textbf{11} (2009), no. 1, 153--176.
 
\bibitem{JulinLaManna}
V. Julin, D. La Manna,
\emph{Short time existence of the classical solution to the fractional mean curvature flow},
ArXiv preprint, arXiv:1906.10990, 2019.

\bibitem{Lay Convex sets}
S. Lay, \emph{Convex sets and their applications}, Pure and Applied Mathematics, A Wiley-Interscience Publication, John Wiley and Sons, Inc., New York, 1982.
 
\bibitem{M12}
F. Maggi,
\emph{Sets of finite perimeter and geometric variational problems. An introduction to geometric measure theory},
Cambridge Studies in Advanced Mathematics, Vol. 135, Cambridge University Press, Cambridge, 2012.

\bibitem{Michal Simon}
J.H. Michael, L.M. Simon, \emph{Sobolev and mean-value inequalities on generalized subamnifolds of~$\R^n$}, Comm. Pure Appl. Math. \textbf{26} (1973), 361--379.

\bibitem{Rosenthal Szasz} A. Rosenthal, O. Sz{\'a}sz, \emph{Eine Extremaleigenschaft der Kurven konstanter Breite}, Jahresber. Deutsch. Math.-Verein. \textbf{25} (1917), 278--282.


\bibitem{Saez Valdinoci} M. S\'aez, E. Valdinoci, \emph{On the evolution by Fractional Mean Curvature},  Comm. Anal. Geom. \textbf{27} (2019), no. 1, 211--249.

\bibitem{Savin Valdinoci 1}
O. Savin, E. Valdinoci, \emph{Density estimates for a nonlocal
variational model via the Sobolev inequality},
 SIAM J. Math. Anal. \textbf{43} (2011), no. 6, 2675--2687.

\bibitem{SV14}
O. Savin, E. Valdinoci,
\emph{Density estimates for a variational model driven by the {G}agliardo norm},
J. Math. Pures Appl. (9) \textbf{101} (2014), no. 1, 1--26.

\bibitem{SB56}
L. N. Slobodecki{\u{\i}}, V. M. Babi{\u{c}},
\emph{On boundedness of the Dirichlet integrals},
Dokl. Akad. Nauk SSSR (N.S.) \textbf{106} (1956), 604--606.

\bibitem{Visintin}
A. Visintin,
\emph{Generalized coarea formula and fractal sets},
Japan J. Indust. Appl. Math. \textbf{8} (1991), no. 2, 175--201.

\end{thebibliography}
\end{document}